\documentclass[10pt,a4paper]{article}
\usepackage[a4paper,left=2.5cm,right=2.5cm,top=3cm,bottom=3cm]{geometry}

\usepackage[utf8]{inputenc}
\usepackage[T1]{fontenc}
\usepackage[english]{babel}

\usepackage{amsmath,amssymb,amsfonts,amsthm}
\usepackage{latexsym}
\usepackage{eucal}
\usepackage{bm}
\usepackage{cases}
\usepackage{esint,comment}
\usepackage{dsfont}
\usepackage{algorithm}
\usepackage{algpseudocode}
\numberwithin{equation}{section}

\usepackage{graphicx}
\usepackage{wrapfig}
\usepackage{float}

\usepackage{subfig}

\usepackage{tikz}
\usetikzlibrary{decorations.markings}

\usepackage{pgfplots}
\pgfplotsset{compat=1.17}

\usepackage{xcolor}
\usepackage{verbatim}
\usepackage{comment}
\usepackage{cancel}
\usepackage{enumerate}

\usepackage[colorlinks=true, urlcolor=blue, citecolor=red, linkcolor=blue,    
linktocpage,  pdfpagelabels,  bookmarksnumbered,  bookmarksopen
]{hyperref}

\newtheorem{theorem}{Theorem}[section]

\newtheorem{proposition}[theorem]{Proposition}
\newtheorem{lemma}[theorem]{Lemma}
\newtheorem{corollary}[theorem]{Corollary}
\newtheorem{remark}[theorem]{Remark}
\newtheorem{remarks}[theorem]{Remarks}

\newcommand{\dys}{\displaystyle}

\newcommand{\beq}{\begin{equation}}
\newcommand{\eeq}{\end{equation}}

\newcommand{\R}{\mathbb R}
\newcommand{\N}{\mathbb N}

\newcommand{\eps}{\varepsilon}
\newcommand{\e}{\varepsilon}

\newcommand{\into}{\int_{\Omega}}

\newcommand{\Mcal}{\mathcal M}
\newcommand{\Ecal}{\mathcal E}

\newcommand{\vfi}{\varphi}

\definecolor{purple}{rgb}{0.5,0.0,0.5}

\newcommand{\sideremark}[1]{%
\ifvmode\leavevmode\fi
\vadjust{%
\vbox to0pt{%
\vss
\hbox to 0pt{%
\hskip\hsize\hskip1em
\vbox{%
\hsize2.1cm
\tiny
\raggedright
\pretolerance10000
\noindent #1\hfill
}%
\hss
}%
\vbox to15pt{\vfil}
\vss
}%
}%
}

\title{Optimization of the total tumor population under Gompertz growth}

\author{
Iulia Martina Bulai\thanks{Department of Mathematics "Giuseppe Peano", Via Carlo Alberto 10, 10123 Torino, Italy. Email: \texttt{iuliamartina.bulai@unito.it}}
\and
Francesca Gladiali\thanks{Department of Chemical, Physical, Mathematical and Natural Sciences, Via Vienna 2, 07100 Sassari, Italy. Email: \texttt{fgladiali@uniss.it}}
\and
Benedetta Pellacci\thanks{Dipartimento di Matematica e Fisica, Universit\`a della Campania ``Luigi Vanvitelli'', via A. Lincoln 5, 81100 Caserta, Italy. Email: \texttt{benedetta.pellacci@unicampania.it}}
}

\date{}

\begin{document}

\maketitle

\begin{abstract}
We study optimal control problems for a stationary reaction--diffusion model 
describing the spatial distribution of a tumor cell population with Gompertz 
growth. The control $m(x)$ represents a  treatment term 
acting as a  density-dependent removal rate and it is subject to  $L^{1}-L^{\infty}$ constraints. 
When the intrinsic growth rate is constant, the uniform distribution of the treatment is shown to be the unique minimizer. 
For the maximization problem, 
we prove that every optimal control is of bang-bang type. 
In addition, we show that in the one dimensional case and for sufficiently large diffusion rates, the positivity set of optimal controls is an interval sticking to one  of the extrema of the domain.

Finally, numerical simulations complement the theoretical analysis and explore regimes that are not fully covered by the results proved in the paper. The computations confirm the bang-bang structure of maximizers, and illustrate how
the shape of optimal controls and the associated states are affected by
 spatial heterogeneity in the growth rate, localized admissible treatment regions, and the diffusion coefficient. Moreover,
they reveal a monotone dependence of the optimized total population on the diffusion coefficient: this
is a new phenomenon with respect to the logistic setting.
\end{abstract}

\noindent\textbf{Keywords:} Gompertz growth; reaction--diffusion equations; optimal control; bang-bang controls.

\noindent\textbf{MSC 2020:}  35J25, 35B40, 35Q92, 49J20.

\section{Introduction}
Tumor growth and invasion can be described as spatial processes resulting 
from the interaction between local cell proliferation and dispersal through 
surrounding tissue. Reaction--diffusion equations provide a natural mathematical 
framework to capture this interaction, with diffusion terms modeling cell motility 
and invasion, and reaction terms accounting for population growth and saturation 
effects \cite{Chaplain2008,Murray2003,Swanson2002,Swanson2003}.
Restricting our attention to isotropic diffusion models,  different reaction terms
are  taken into consideration depending on the particular issues of interest: logistic growth has been extensively  employed to represent density-dependent 
proliferation constrained by a carrying capacity (\cite{Murray2003}), while, among 
others,   the one introduced by Gompertz (see 
\cite{gompertz, laird})   has been used, in many cases more efficiently (see \cite{Vaghi2020}),   to reproduce experimentally observed tumor growth dynamics 
(\cite{gerlee, TalkingtonDurrett2015}).
Related mathematical and numerical approaches have also been developed for tumor growth models where the evolution of cell density is described through differential equations and integral formulations. In this direction, several recent works have addressed numerical resolution, growth prediction, treatment effects, and the computation of biologically relevant observables \cite{BulaiDeBonisLauritaSagaria2022, BulaiDeBonisLauritaSagaria2023, BulaiDeBonisLaurita2025AMC, BulaiDeBonisLaurita2025MCS}.
Within this research framework, the model we study  is governed by the following reaction-diffusion problem
\begin{equation}\begin{cases}\label{PP}
u_{t}-d\Delta u=s(x)u \ln \frac K u-m(x) u 
& \text{ in } \Omega\times [0,T)\\
\qquad u>0 & \text{ in } \Omega\times [0,T)\\
\quad\partial _\nu u=0 & \text{ on } \partial \Omega\times [0,T)
\\
u(x,0)=u_{0}  & \text{ in } \Omega\
\end{cases}
\end{equation}
where $u_{0}\geq 0$ is an initial datum, the non-negative initial density of the population,  $d>0$ is the diffusion (or motility) 
coefficient and Neumann boundary conditions are imposed on the boundary of 
$\Omega$ which is supposed to be a bounded  domain  of class $C^{2}$ in $\R^n$, 
($n\geq 1$).
The equation describes the dispersal of a tumor cell population of density 
\(u=u(x,t)\),  evolving in a heterogeneous tissue.  The  
function \(s=s(x)\) denotes an  intrinsic growth rate satisfying 
\begin{equation}\label{ipo:s}
0<\underline{s}\leq s(x)\leq \overline{s},\quad \text{a.e. $x\in \Omega$ with $\underline{s}$, $\overline{s}$ $\in \R^+$},
\end{equation}
and  the constant carrying capacity is given by \(K>0\). 
The Gompertz law is encoded in the non-linear reaction term.

From a control-theoretic viewpoint, therapeutic interventions can be incorporated into
tumor-growth models through spatially dependent control functions that represent
treatment, removal, or therapeutic mechanisms acting on the population
\cite{AllegrettiBulaiLenhartOrru2026, FisterPanetta2000, LenhartWorkman2007,Troltzsch2010}.
In Problem \eqref{PP} this role is played by the control function $m(x)$, which, due to the minus sign, hinders  the growth of $u$.

The controls are typically subjected to both pointwise constraints, reflecting 
local limitations on treatment  intensity,
and integral constraints, accounting for bounded global resource or toxicity bounds.
These requirements naturally lead one to consider a control $m$ varying in the class
\begin{equation}\label{M-set}
\mathcal M:=\{  0\leq m(x)\leq \overline{m}(x), \text{ a.e.  in } \Omega \text{ and }\int_\Omega m(x)= M\}
\end{equation}
where $\overline{m}(x)\in L^\infty(\Omega)$  and $0<M<\int_\Omega \overline{m}(x)\, dx$. Note that the integral constraint implies that $m^{+}\not \equiv 0$. 
\\
The integral bound given by $M$ allows to avoid the possibility to  saturate the entire tissue with the maximum treatment, owing to, the above mentioned, limited global resources or toxicity bounds. This
kind of $L^{1}-L^{\infty}$ constraints are classically adopted in optimization problems (see  \cite{HP}).

As a consequence of the above assumptions, there exists a unique  solution $u(x,t)$ to \eqref{PP} associated with every non-negative initial datum $u_{0}\not \equiv 0$, and $u(x,t)$ converges, as t goes to infinity,
to the unique positive solution to the elliptic problem, (see \cite[Chapter 10]{SV} and Remark \ref{rem-par}),
\begin{equation}\label{P}
\begin{cases}
-d\Delta u=s(x)u \ln \frac K u-m(x) u 
& \text{ in } \Omega\\
\qquad u>0 & \text{ in } \Omega\\
\quad\partial _\nu u=0 & \text{ on } \partial \Omega.
\end{cases}\end{equation}
As a first consequence of the features of the nonlinearity,  this result holds for every $d>0$ independently of the sign of $M$  (see Remark  \ref{rmk:ipom}). 
This marks a first difference with respect to the problem governed by a logistic growth.
Indeed, in this latter model,  if $M<0$ and $m^{+}\not\equiv 0$, 
 a survival threshold arises as a principal eigenvalue  $\lambda(m)$ (see e.g. \cite[Chapter 10]{SV}). Then, a widely studied optimization problem consists 
in the minimization of the cost functional $\lambda(m)$ with respect to the control 
$m$; A variety of results have been obtained in the study of the qualitative properties of optimizers, depending on the different boundary conditions, including in asymptotic regimes.
   (see e.g.  \cite{cc, louyan, hkl, llnp, mapeve1, mnpchal, mapeve2,  pepisc, feve1, feve2, fmpv}).

In our context, a natural optimization problem regards the 
study of the total population, namely the cost functional $J_{d}: \Mcal \to \R$ given by
\begin{equation}\label{eq:Jd}
J_{d}(m):=\into u(x)dx
\end{equation}
where $u$ is the unique  solution of \eqref{P} for a given $m\in \Mcal$. In particular, we will be 
focused on the  following two optimization problems, 
\begin{equation}\label{eq:defmax}
\min_{m\in \Mcal}J_{d}(m)=\min_{m\in \Mcal}\into u(x) , \qquad \max_{m\in \Mcal}J_{d}(m)=\max_{m\in \Mcal}\into u(x) .
\end{equation}
These can be seen as control problems, with $m$ the control, $u$ the associated state, i.e. the solution to \eqref{P}, and $J_{d}$ the cost functional.
We tackle this study  focusing on these different aspects:
our nonlinearity is not of polynomial type, and it has an infinite derivative at zero; 
$m$ and $s$ are decorrelated (see  \cite{deanizha, guoheni}); $\overline{m}$ is not constant and it  can be zero in a subset of $\Omega$ of positive measure. This last condition opens a new situation  even in the logistic problem.

By means of  the direct methods  of the calculus of variations one can prove that these extrema are attained respectively by some weights $m_{*},\, m^{*} \in \Mcal $. 
Then natural questions arise concerning qualitative properties of $m_{*}$ and $m^{*}$.

As a first observation, note that Problem \eqref{P} has a constant solution if and only if $m$ and $s$ are proportional with the proportionality constant given by $\alpha=M/\into s$; in this case the constant solution is given by
\[
U_{\infty}:=Ke^{-\frac{M}{\into s(x) dx}}.
\] 
The constant $U_{\infty}$ is naturally related to the minimum problem at least when $s(x)$
is constant as established by the following result.
\begin{theorem}\label{th:minscost_intro}
Assume $s(x)\equiv s$ is constant in $\Omega$ and 
\begin{equation}\label{ipo:M_intro}
\overline{m}\geq \frac{M}{|\Omega|}.
\end{equation}
Then, the only solution of the minimum problem in \eqref{eq:defmax} is  the constant weight $M/|\Omega|$.
\end{theorem}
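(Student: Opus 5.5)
Our approach is to exploit the change of variable $w=\ln(u/K)$, which turns the Gompertz equation into one that is linear in $m$ in a convenient way. With $s$ constant and $u=Ke^{w}$, we have $\nabla u=u\nabla w$ and $\Delta u=u(\Delta w+|\nabla w|^2)$. Hence \eqref{P} becomes
\begin{equation*}
-d\Delta w-d|\nabla w|^2=-s\,w-m \quad\text{in }\Omega,\qquad \partial_\nu w=0 \text{ on }\partial\Omega .
\end{equation*}
Integrating over $\Omega$ and using the Neumann condition gives the identity
\begin{equation*}
s\into w\,dx=-M+d\into|\nabla w|^2dx .
\end{equation*}
Jensen's inequality for the convex exponential then yields
\begin{equation*}
J_d(m)=K\into e^{w}\ge K|\Omega|\exp\Big(\frac{1}{|\Omega|}\into w\Big)
=K|\Omega|\exp\Big(\frac{-M+d\into|\nabla w|^2}{s|\Omega|}\Big)\ge |\Omega|\,K e^{-M/(s|\Omega|)}=|\Omega|U_\infty .
\end{equation*}

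The key steps, in order, are as follows. First, check that the solution $u$ of \eqref{P} is regular enough for the substitution: $u\in W^{2,p}$ with $\min u>0$ by the existence/positivity theory quoted earlier. Then $w\in W^{2,p}$ and the integration by parts $\into\Delta w=0$ is legitimate. Next, derive the integral identity above and apply Jensen. Finally, observe that the constant weight $M/|\Omega|$ is admissible thanks to \eqref{ipo:M_intro}. Since it is proportional to $s$, its state is the constant $U_\infty$, so the lower bound $|\Omega|U_\infty$ is attained and this weight is a minimizer.

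For uniqueness, suppose $J_d(m)=|\Omega|U_\infty$. Then both inequalities are equalities. In particular $\into|\nabla w|^2=0$, so $w$ is constant, hence $u$ is constant. Plugging a constant $u$ into \eqref{P} gives $m=s\ln(K/u)$ constant a.e., and the integral constraint forces $m=M/|\Omega|$.

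The main (mild) obstacle is justifying the regularity and positivity of $u$, bounded away from zero, needed for the logarithmic change of variable and the integration by parts. This should follow from the elliptic estimates and the strong maximum principle/Harnack inequality already invoked to establish existence for \eqref{P}. The rest is a direct computation.
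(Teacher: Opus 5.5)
Your proposal is correct and follows essentially the same route as the paper: Lemma \ref{lemma:minlogu} tests the equation with $1/u$, which is the same as integrating your equation for $w=\ln(u/K)$, to get $\int_\Omega \ln u=\int_\Omega \ln U_\infty+\frac{d}{s}\int_\Omega |\nabla u|^2/u^2$, and Theorem \ref{th:minscost} then applies Jensen's inequality to $e^{t}$, with the equality case forcing $u$ constant and hence $m\equiv M/|\Omega|$ via Proposition \ref{u-costante}. The regularity point you flag is already settled by Proposition \ref{existence}: $u\in C^{1,\alpha}$ and $u\ge\bar\varepsilon>0$, so $1/u\in H^1(\Omega)$ is an admissible test function and no $W^{2,p}$ argument is needed.
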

Theorem \ref{th:minscost_intro} follows by  applying Theorem \ref{th:minscost} and Proposition \ref{u-costante}: We first exploit the problem solved by $w:=\ln u$ (see also Remark \ref{rem:ln}): this can be studied following the strategy introduced by \cite{Lou}; then, we can transfer back the information to the original problem making use of the convexity property 
of the transformation. In this last step the hypothesis on $s(x)$ is crucial. 
If $s$ is not constant and  
\[
\overline{m}\geq \frac{M}{\into s(x)dx}s(x),
\]
 the control $m:=\frac{M}{\into s(x)dx}s(x)$, although belonging to the class $
 \mathcal M$, is not a minimizer (see Proposition
\ref{pro:minsnocostante}). 
Actually, when $s$ is not constant,
the  minimization problem is also open  for the logistic model,  even 
assuming $\overline{m}$ to be  constant, indeed, for
$f(x,t)=m(x)t-s(x)t^{2}$, different minimizers can be obtained depending on the correlation
between $m$ and $s$ (see  \cite{deanizha, guoheni}).
We have addressed this issue also using numerical simulations, and the results,
included in Figures ~\ref{fig:gompertz_1D_min}, 
\ref{fig:gompertz_1D_min_piecewise_heatmaps_profiles}, 
suggest that the minimizer $m_{*}$ should keep a proportionality dependence on $s$ 
respecting at the same time the measure constraint (see Remark 
\ref{rem:minlogus}). 
Numerical analysis has also been exploited to provide insights into the case 
$|\Omega'|>0$, which remains open for logistic growth, where
\beq\label{eq:omegaprimo}
\Omega':=\left\{x\in \Omega : \overline{m}(x)=0\right\}.
\eeq
The results, included in Figure~\ref{fig:gompertz_1D_i_ii}, suggest that $m_{*}$ saturates 
the constraint $\overline{m}$ in a set of measure depending on $M$ and $\into \overline{m}$
(see Remark \ref{rem:min_s_const}).
\vskip3pt
As for the maximization problem in \eqref{eq:defmax},
in the framework of logistic growth and assuming $\overline{m}\equiv 1$, it has 
been recently proved that this problem 
 is solved by a bang-bang weight (see \cite{yanagida, manaprcpde, FMP}), namely
$m^{*}(x)=\chi_{E}$ where $E$ turns out to be a sub-level set of the so-called switching function $\vfi$ (see \cite{manaprjmpa}) 
which, in our case, is  the unique solution to the problem
 \begin{equation}\label{eq:switch}
\begin{cases}
-d\Delta \vfi+ 2d\frac{\nabla u\cdot \nabla \vfi}{u}=2d\frac{|\nabla u|^{2}}{u^{2}}\vfi+
\vfi\left(2s(x)\ln\frac{K}u-2m(x)-s(x)\right)+ u&\text{in $\Omega$}
\\
\partial_{\nu}\vfi=0 &\text{on $\partial\Omega$,}
\end{cases}
\end{equation}
with $u$ is the solution to \eqref{P} associated with the control $m$.
Our result is related to the bang-bang property of the maximizer and it  is contained in the following Theorem.
 \begin{theorem}\label{thm:mbang}
Let $\Omega\subset \R^n$  be a bounded open connected domain with a $C^2$ boundary.
Let $m^*$ be a solution to the maximum problem in \eqref{eq:defmax}, and $u^*$, $\vfi^*$ be the corresponding state and switching function. Then there exists a measurable subset  $\omega^*\subset \Omega\setminus\Omega'$,  where  $\Omega'$ is introduced in \eqref{eq:omegaprimo},  such that 
\[m^*=\overline{m}(x)\chi_{\omega^*} \]
and $\int_{\omega^*}\overline{m}=M$. Moreover, 
\[
\text{either }\; \omega^*=\{x\in \Omega\setminus \Omega': \varphi^*< c^*\}
\quad \text{or }\;  \omega^*=\{x\in \Omega\setminus \Omega': \varphi^*\leq c^*\}
\]
 where $\dys c^*=  \sup_{\omega^*} \varphi^*(x)$.
\end{theorem}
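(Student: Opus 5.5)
The plan is to show first that $m\mapsto J_d(m)$ is \emph{strictly convex} on the convex set $\Mcal$. A maximizer must then be an extreme point of $\Mcal$, which gives the bang-bang property. After that, the first-order optimality conditions expressed through the switching function $\vfi$ give the level-set description of $\omega^*$.

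\emph{Step 1: convexity via $w=\ln u$.} For $m\in\Mcal$ let $u_m$ be the solution of \eqref{P} and set $w_m=\ln u_m$ (cf. Remark \ref{rem:ln}). Then $w_m$ solves
\[
-d\Delta w - d|\nabla w|^2 + s(x) w = s(x)\ln K - m(x)\ \text{ in }\Omega,\qquad \partial_\nu w=0 \text{ on }\partial\Omega .
\]
Take $m_0,m_1\in\Mcal$, $t\in(0,1)$, $m_t=tm_0+(1-t)m_1$, and $\bar w=tw_{m_0}+(1-t)w_{m_1}$. Since $\xi\mapsto|\xi|^2$ is convex, $|\nabla \bar w|^2\le t|\nabla w_{m_0}|^2+(1-t)|\nabla w_{m_1}|^2$. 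The operator is otherwise linear in $w$, so
\[
-d\Delta\bar w-d|\nabla\bar w|^2+s\bar w\ \ge\ s\ln K-m_t .
\]
Hence $\bar w$ is a supersolution of the $w$-problem associated with $m_t$. Uniqueness and a comparison principle for this problem give $w_{m_t}\le\bar w$. These follow by transferring to $u$, where the reaction $s\,u\ln(K/u)/u$ is strictly decreasing in $u$, or directly by the maximum principle on $w_{m_t}-\bar w$, the gradient terms being handled as a first-order drift. Exponentiating and using the convexity of $\exp$, we get
\[
u_{m_t}\le e^{tw_{m_0}+(1-t)w_{m_1}}\le t\,u_{m_0}+(1-t)\,u_{m_1}.
\]
The second inequality is strict on the set where $w_{m_0}\neq w_{m_1}$. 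If $m_0\ne m_1$, then $w_{m_0}\not\equiv w_{m_1}$, because the equation recovers $m$ from $w$. That set therefore has positive measure, and integrating gives $J_d(m_t)<tJ_d(m_0)+(1-t)J_d(m_1)$.

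\emph{Step 2: bang-bang.} Suppose the set $I=\{0<m^*<\overline m\}$ has positive measure. Then $I$ contains a set $\{\delta<m^*<\overline m-\delta\}$ of positive measure. Split it into two disjoint pieces $A,B$ of positive measure and take $h=\chi_A/|A|-\chi_B/|B|$. Then $\int h=0$, and $m^*\pm\epsilon h\in\Mcal$ for small $\epsilon$. Strict convexity gives
\[
J_d(m^*)<\tfrac12 J_d(m^*+\epsilon h)+\tfrac12 J_d(m^*-\epsilon h)\le J_d(m^*),
\]
a contradiction. So $m^*\in\{0,\overline m\}$ a.e. On $\Omega'$ we have $m^*=0$, so $m^*=\overline m\chi_{\omega^*}$ with $\omega^*\subset\Omega\setminus\Omega'$ and $\int_{\omega^*}\overline m=M$.

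\emph{Step 3: characterization by $\vfi^*$.} Next I compute the Gâteaux derivative of $J_d$ at $m^*$ in an admissible direction $h$. The derivative $\dot u$ solves
\[
-d\Delta\dot u=\bigl(s\ln\tfrac Ku-s-m\bigr)\dot u-hu .
\]
Introduce the adjoint $p$ with right-hand side $1$. Then $J_d'(m)[h]=-\into h\,u\,p$, and a direct computation shows that $\vfi=u\,p$ solves \eqref{eq:switch}. So $J_d'(m^*)[h]=-\into h\vfi^*$. Comparing with directions that move mass from $\omega^*$ to $(\Omega\setminus\Omega')\setminus\omega^*$, maximality gives $\vfi^*\le c^*\le\vfi^*$ in the sense $\sup_{\omega^*}\vfi^*=c^*\le\operatorname{ess\,inf}_{(\Omega\setminus\Omega')\setminus\omega^*}\vfi^*$. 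Hence $\{\vfi^*<c^*\}\cap(\Omega\setminus\Omega')\subset\omega^*\subset\{\vfi^*\le c^*\}\cap(\Omega\setminus\Omega')$.

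To obtain exactly one of the two alternatives, one needs to know how $\omega^*$ meets the level set $\{\vfi^*=c^*\}$. If that level set has measure zero, the two sets coincide up to null sets. If it has positive measure, one has to argue that $\omega^*$ contains either all or none of it. I expect this to be the delicate point. I would first try to show that $J_d$ restricted to controls that differ from $m^*$ only on the level set is affine to first order, so that strict convexity forces saturation one way. Failing that, I would use the equation \eqref{eq:switch} on the level set, where $\nabla\vfi^*=0$ and $\Delta\vfi^*=0$ a.e.: there it gives
\[
2d\tfrac{|\nabla u^*|^2}{(u^*)^2}c^*+c^*\bigl(2s\ln\tfrac K{u^*}-2m^*-s\bigr)+u^*=0,
\]
which should prevent $m^*$ from taking both values $0$ and $\overline m$ there.

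The remaining technical obstacle is the rigorous justification of the comparison principle for the $w$-equation, whose quadratic gradient term needs the $L^\infty$/$C^{1}$ bounds on $w$ coming from \eqref{ipo:s} and $m\in L^\infty$. The same bounds are needed for the differentiability of $m\mapsto u_m$ used in Step 3.
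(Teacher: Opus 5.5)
Your proof is correct, and it reaches the bang-bang property by a genuinely different route from the paper.

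The paper argues locally at $m^*$. It computes $J''(m^*)[g,g]=2d\int_\Omega\frac{p^*}{u^*}|\nabla\dot u|^2+\int_\Omega V_{m^*}\dot u^2$. It then uses the estimates of \cite{FMP} to get $J''(m^*)[g,g]\ge\frac12A_1\|g\|^2_{W^{-1,2}(\Omega)}$ for perturbations supported in sets of small measure, with a separate adaptation when $n=1$. Finally it combines this with $\varphi^*\equiv c$ on $\{0<m^*<\overline m\}$ to contradict maximality.

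You instead prove that $J_d$ is strictly convex on all of $\Mcal$. After the substitution $w=\ln u$, the Gompertz reaction is affine in $(w,m)$, so the only nonlinearity is the convex term $|\nabla w|^2$. Hence $e^{tw_{m_0}+(1-t)w_{m_1}}$ is a positive supersolution for $m_t$. The comparison you need holds because $s\ln\frac Ku-m$ is strictly decreasing in $u$; the sweeping argument in the uniqueness part of Proposition \ref{existence} works with a supersolution in place of $u_2$. Strict AM--GM then concludes.

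Your route is shorter, independent of dimension, and avoids the $W^{-1,2}$/$W^{-2,2}$ machinery. It also yields more than the theorem. For instance, the minimizer $m_*$ is unique for every $s$ and $\overline m$. The convexity of $J_1$ in Lemma \ref{le:J1convex} also follows by letting $d\to\infty$ in $d(J_d-|\Omega|U_\infty)$. In exchange, the paper's second-order method does not use the affine structure in $\ln u$, which is lost for other growth laws such as the logistic one.

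For the level-set part, your first suggestion is the right one and settles it quickly. Set $L=\{\varphi^*=c^*\}\setminus\Omega'$ and suppose $L$ meets both $\omega^*$ and its complement in positive measure. Take $h\le0$ on a piece of $L\cap\omega^*$ and $h\ge0$ on a piece of $L\setminus\omega^*$ contained in $\{\overline m>1/n\}$, with $\int_\Omega h=0$. Then $J_d'(m^*)[h]=-c^*\int_\Omega h=0$, so convexity gives $J_d(m^*+\epsilon h)\ge J_d(m^*)$. Thus $m^*+\epsilon h$ would be a maximizer that is not bang-bang, contradicting your Step 2. This is the paper's argument with strict convexity in place of $J''>0$.

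The fallback through the equation on the level set is unnecessary, and it would not work by itself. It only expresses $m^*$ on $L$ in terms of $u^*$ and $\nabla u^*$. Showing that this value lies strictly in $(0,\overline m)$ is essentially the open question about strict inequalities in \eqref{inequalities} raised in the paper.
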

The proof follows  the strategy introduced in \cite{manaprcpde,FMP}: we will include the relevant details regarding our setting, mainly due to the assumption on $\overline{m}$.
Theorem \ref{thm:mbang} shows that the worst distribution of treatment is of 
``generalized'' bang-bang type, namely it concentrates  the maximum amount of treatment,
which is not constant in this context,  in a subset $\omega^*$ of $\Omega$.

Our analysis is deepened in  dimension one, as the following result shows.
\begin{theorem}\label{teo-concentration}
Let us assume that $n=1$, $\Omega=(0,1)$,  $s(x)=s\in (0,+\infty)$ and $\overline{m}(x)\equiv 
1$. There exists $\hat d>0$ such that for every $d>\hat d$ any optimal weight $m^*_d$ for 
$J_{d}(m)$ is equal, almost everywhere in $\Omega$, either to $ m_1=\chi_{(0,M)}$ or $ 
m_2=\chi_{(1-M,M)}$.
\end{theorem}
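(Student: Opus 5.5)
The plan is a large-diffusion expansion. First we reduce the maximization of $J_d$ to a problem that becomes explicit as $d\to\infty$, and then we use Theorem \ref{thm:mbang} to rule out every bang-bang configuration other than an interval attached to an endpoint. (Note that $m_2$ should read $\chi_{(1-M,1)}$.)

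\emph{Step 1: expansion of $J_d$.} Set $w=\ln u$, so that $-d w''-d (w')^2 = s(\ln K - w) - m$ with Neumann conditions. Write $w=\bar w + v/d$ with $\int_0^1 v=0$. Integrating the equation and then expanding to order $1/d$ gives
\[
\bar w=\ln K-\frac{M}{s}+O(1/d),\qquad -v''= s\bar w_\infty-s\ln K+m = m-M,
\]
so that $v=v_m$ is the Neumann solution of $-v_m''=m-M$ with zero mean. Pushing the expansion to second order, we expect
\[
J_d(m)=U_\infty\Bigl(1+\frac{1}{d^2}\,\Phi(m)+o(d^{-2})\Bigr),\qquad \Phi(m)=c_1\int_0^1 v_m^2+c_2\int_0^1 (v_m')^2 ,
\]
where $c_1,c_2$ are explicit positive constants. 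The $(v')^2$ term comes from the $d(w')^2$ term, and the $v^2$ term comes from the convexity of $e^w$ together with the correction to $\bar w$. We need to check that the first-order term in $1/d$ does not depend on $m$ (it vanishes because $\int v=0$), and that the remainder is uniform over $m\in\Mcal$. Uniformity follows from $W^{2,p}$ bounds on $v$ that hold uniformly for $0\le m\le 1$.

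\emph{Step 2: the limit problem.} We maximize $\Phi$ over $\Mcal$ with $\overline m\equiv1$. Writing $V_m(x)=\int_0^x (m-M)$, we have $v_m'=-V_m$, and both integrals in $\Phi$ are monotone in a rearrangement sense. We claim that the maxima of $\Phi$ are exactly $m_1$ and $m_2$. The argument is that $V_m$ vanishes at $0$ and $1$ and satisfies $-M\le V_m'\le 1-M$. The pointwise extremal profile $|V_m|\le \min\{(1-M)x,\,M(1-x)\}$ (or its mirror image) is attained only by the monotone rearrangement $m_1$ (resp.\ $m_2$). Since $|V_m|$ cannot exceed either tent profile, a strict inequality follows whenever $m\neq m_{1,2}$. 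For the $\int v^2$ term we use the Green representation, or alternatively a direct comparison. This step is a finite-dimensional-flavoured but genuinely variational argument, and it is the one I would check carefully.

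\emph{Step 3: from the limit to finite $d$.} This is the main obstacle. Step 2 together with $\Gamma$-convergence only yields $m^*_d\to m_1$ or $m_2$ (weakly, and hence strongly by the bang-bang property). Exact equality for large $d$ requires more. Following \cite{manaprcpde,FMP}, we use Theorem \ref{thm:mbang}: $m^*_d=\chi_{\omega_d}$, where $\omega_d$ is a sublevel set of $\vfi_d$. Rescaling, $\vfi_d$ converges in $C^1$ to a limit switching function $\psi$. For $m_1$, $\psi$ is an explicit function that is strictly monotone on $(0,1)$, with $\psi'\neq0$ in the interior; this monotonicity must be verified by hand from the limit equation. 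By $C^1$ convergence, $\vfi_d$ is then strictly monotone for large $d$, so its sublevel sets are intervals containing an endpoint. The measure constraint then forces $\omega_d=(0,M)$ or $(1-M,1)$. The delicate point is making the nondegeneracy $\psi'\neq0$ hold up to the boundary points. Near $x=0$ and $x=1$ the derivatives vanish by the Neumann condition, so there we would use a second-derivative sign instead, namely that $\psi''(0)$ and $\psi''(1)$ are nonzero with the appropriate signs, read off from the limit equation.
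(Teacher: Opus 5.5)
\textbf{Gaps in Steps 1--3.} Your overall architecture matches the paper's: expand in $1/d$, identify the limit maximizer, then combine Theorem \ref{thm:mbang} with convergence of the rescaled switching function and strict monotonicity of its limit. However, Step 1 is wrong in a way that changes the limit problem. Integrating the equation for $w$ over $(0,1)$ with Neumann conditions gives $-d\int_0^1(w')^2=s\ln K-s\bar w-M$, hence $\bar w=\ln K-\frac Ms+\frac1{sd}\int_0^1(v')^2$. So the correction to the mean is of order $1/d$ and depends on $m$. Consequently (note also that $-v_m''=M-m$, not $m-M$)
\[
J_d(m)=U_\infty\Bigl(1+\frac{1}{sd}\int_0^1 (v_m')^2\Bigr)+O(d^{-2}),
\]
so the first-order term does not vanish. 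It is exactly the paper's $J_1(m)=\int_\Omega\eta=\frac1{sU_\infty}\int_\Omega|\eta'|^2$, with $\eta=U_\infty\bigl(v_m+\frac1s\int_0^1(v_m')^2\bigr)$. The limit problem is therefore $\max_{\Mcal}\int_0^1V_m^2$ with $V_m(x)=\int_0^x(m-M)$, not your $\Phi$. Your $\Phi$ is not even the correct $d^{-2}$ coefficient, which would also involve the next corrector of $v$ and the square of the $d^{-1}$ term.

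After this correction, your Step 2 idea is a genuinely more elementary substitute for the paper's route (convexity of $J_1$, the min--min reformulation, the P\'olya and Hardy--Littlewood inequalities, and \cite{louc}). But the pointwise argument as stated fails for $M\neq\frac12$. The constraints $V_m(0)=V_m(1)=0$ and $-M\le V_m'\le 1-M$ only give $V_m\le T_+$ and $-V_m\le T_-$, where $T_+(x)=\min\{(1-M)x,M(1-x)\}$ and $T_-(x)=T_+(1-x)$. So $|V_m|$ can exceed $T_+$: for instance $|V_{m_2}|=T_-$ exceeds $T_+$ near $x=1$ when $M<\frac12$. The fix is to apply the rescaled tent bound on each connected component of $\{V_m\neq0\}$. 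A component of length $\ell$ contributes at most $\ell^3\int_0^1T_+^2$, and $\sum_j\ell_j^3\le(\sum_j\ell_j)^3\le1$. Equality holds only for a single component of full length on which $V_m=T_+$ or $V_m=-T_-$, i.e.\ $m=m_1$ or $m=m_2$. This gives uniqueness of the limit maximizer directly, and the $\int v^2$ term is not needed.

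In Step 3, the endpoint device does not work. Transferring the signs of $\psi''(0)$ and $\psi''(1)$ to $z_d=d(\vfi^*_d-p_\infty U_\infty)$ would require $C^2$ convergence up to the boundary. By \eqref{eq:switch}, $z_d''=d(\vfi^*_d)''$ contains the term $2m^*_d\vfi^*_d$, so its sign near $0$ and $1$ is decided by whether $m^*_d$ equals $1$ or $0$ there, which is exactly what you want to prove. Since $m^*_d\to m_\infty$ only in $L^p$, this gives no pointwise control.

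Monotonicity near the endpoints is in fact not needed. Set $\mu_d=d(c^*_d-p_\infty U_\infty)$, so that $\{z_d<\mu_d\}\subseteq\omega^*_d\subseteq\{z_d\le\mu_d\}$. Uniform convergence $z_d\to z_\infty$, strict monotonicity of $z_\infty$, and $|\omega^*_d|=M$ give $\mu_d\to z_\infty(M)$, which lies strictly between $z_\infty(0)$ and $z_\infty(1)$. Hence, for $d$ large, $z_d<\mu_d$ on $[0,\eps]$ and $z_d>\mu_d$ on $[1-\eps,1]$, while $C^1$ convergence gives $z_d'>0$ on $[\eps,1-\eps]$. Thus $\{z_d<\mu_d\}=(0,x_d)$, the level set $\{z_d=\mu_d\}$ is a single point, and $x_d=M$. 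This is how the paper concludes. With these corrections, plus the standard contradiction argument along sequences $d_k\to\infty$ to obtain a uniform $\hat d$, your plan becomes a complete proof.
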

This result relies on a careful analysis of the corresponding optimization problem for the first 
order expansion of the cost functional with respect to $d$ tending to plus infinity. In the spirit
of \cite[Theorem 3]{manaprjmpa}, this ``first order optimization problem'' is recast as a 
``min-min'' problem--here the assumption on $s$ and $\overline{m}$ are 
crucial--whose 
solution can be completely characterized in the one dimensional case. These 
qualitative  properties are transferred back for $d$ sufficiently large thanks to good convergence.
\\
This Theorem can be exploited to obtain some new results in the case of  
$\overline{m}\not \equiv 1$ (see for more details Corollary \ref{cor:teoest} and Remark \ref{rem:teoest}).
This result is confined to the one dimensional case due to different reasons: first the 
minimizers of the ``first order optimization problem'' are not characterized in higher 
dimensions. With this respect, we believe that  qualitative asymptotical results could be 
obtained, as in the logistic growth models, in the spirit of \cite{mapeve1, mapeve2, femapeve}  or   by exploiting special symmetry properties of the domain. In particular, when \(\Omega\) is a square, symmetry may favor optimal sets concentrated near corners. This type of behavior is consistent with numerical evidence for related logistic total-population optimization problems in higher-dimensional domains; see, for instance, \cite{ding-finotti-lenhart, manaprjmpa,  mabalet, KaoMohammadi2022}. The two-dimensional computation reported in Figure~\ref{fig:gompertz_2D_max} points in the same direction, although a rigorous characterization of such configurations remains open.

Let us finally notice that the contribution in \cite{heokim,manaprjmpa,mabalet} reveal a strong influence of the 
diffusion coefficient on this study. This actually leads to new phenomena with respect to the 
more studied optimization problem of the survival threshold.
In particular, in these contributions it is shown that concentrating the distribution
of resources increases the total mass of the population if $d$ is large, while fragmenting it
is  a better strategy   for $d$ is small.
This latter property is obtained starting from this crucial information, already proved by \cite{Lou},
\[
\lim_{d\to 0^+} J_{d}(m)=\lim_{d\to \infty} J_{d}(m) .
\]
Here, for $s$ constant, a simple application of Jensen inequality yields
\[
\lim_{d\to 0^+} J_{d}(m)\geq \lim_{d\to \infty} J_{d}(m), \; \text{and}\;
\lim_{d\to 0^+} J_{d}(m)= \lim_{d\to \infty} J_{d}(m)\; \Leftrightarrow \; m \,\text{ is constant.}
\]
On the other hand, it would be sufficient to know that
$d\mapsto J_d$ is increasing for $d$ sufficiently small to produce a fragmentation effect in the one dimensional case. This information is not only not available at the moment,  but it seems to be not expected in our setting. 
The numerical simulations included in Figure \ref{fig:s_and_J_max_1D} show a monotone 
dependence of $J_{d}$ with respect to $d$ which could allow to exclude fragmentation for every
diffusion rate.
In this respect, we believe that a very interesting open problem could be the asymptotical analysis of the singular limit of $J_{d}$ as $d\to 0^{+}$.
 \vskip5pt
The paper is organized as follows. In Section~\ref{se:esistenza} we introduce the stationary Gompertz problem, prove existence and uniqueness of positive solutions, and establish the existence of optimal controls for the minimization and maximization problems. Section~\ref{se:lin_eq} is devoted to the linearized equation, the differentiability of the control-to-state map, and the adjoint formulation used to derive the optimality conditions. In Section~\ref{se:bang_bang_max} we prove the ``generalized'' bang-bang property of maximizers. Section~\ref{se:miniprop} discusses qualitative properties of minimizers, including the case of constant growth rate and the role of spatial heterogeneity. In Section~\ref{se:asym_d} we study the asymptotic behavior of the state, adjoint state, and switching function as the diffusion coefficient tends to zero or to infinity. These asymptotic expansions are then used in 
Section~\ref{se:thm13} to prove the one-dimensional large-diffusion characterization of maximizers. Finally, Section~\ref{se:num_results} presents numerical simulations illustrating the theoretical results and exploring regimes not fully covered by the analysis.
\vskip5pt
{\bf Notation.}
In the following we will use the following notations.
\begin{itemize}
\item
$\|\cdot \|_{p}$ denotes the $L^{p}(\Omega)$ norm.
\item
For any measurable function $v$, let $v(x)=v^+(x)-v^-(x)$ where $v^+(x):=\max\{0,v(x)\}$ and $v^-:=\max\{0,-v(x)\}$.
\item
For any measurable set $\omega$,  $\chi_\omega$ denotes the characteristic function associated with $\omega$; namely $\chi_\omega(x)=1$ iff $x\in \omega$ and $\chi_\omega(x)=0$ otherwise.
\item
\(u\) will denote the unique positive
solution of \eqref{P}. When the dependence on the diffusion coefficient is
emphasized, we will write \(u_d\). The corresponding adjoint state will be denoted by \(p\), or by \(p_d\) when the dependence on \(d\) is
made explicit.
\item
\(m^*\) will denote a maximizer of \(J_{d}\) over \(\mathcal M\)
(see \eqref{M-set}), and  \(m_*\) a
minimizer. The corresponding states and adjoint states will be denoted 
respectively by $ u^*,\ p^*,$ and $ u_*,\ p_* .$

\end{itemize}

\section{The elliptic problem}\label{se:esistenza}
Let us introduce the Carath\'eodory function
\begin{equation}\label{f}
f(x,u):=\begin{cases} 
s(x)u \ln \frac K u-m(x) u & \text{ for  } u>0\\
0 & \text{ for  } u\leq 0
\end{cases}
\end{equation}
so that every positive solution to the problem
\begin{equation}\label{Pf}
\begin{cases}
-d\Delta u=f(x,u)
& \text{ in } \Omega\\
\quad\partial _\nu u=0 & \text{ on } \partial \Omega
\end{cases}\end{equation}
is a  solution to \eqref{P}.

Note that \eqref{P} is variational, so that every solution  is a critical point of the functional $\Ecal: H^{1}(\Omega)\mapsto \R$ defined by
\begin{equation}\label{eq:defE}
\Ecal(u):=\dfrac{d}2\into |\nabla u|^{2}-\into F(x,u)
\end{equation}
where $F(x,t):=\int_{0}^{t}f(x,z)dz$ is  given by
\[
F(x,t)=\frac{t^{2}}2\Big(\frac{s(x)}2-m(x)\Big)+\frac{s(x)}2t^{2}\ln\frac{K}t   \ \ \hbox{ for }t>0.
\]
Let us first prove that  there always  exists a unique positive solution.
\begin{proposition}\label{existence}
Assume \eqref{ipo:s}. Then,
for every $d>0$ and for every $m\in \mathcal M$, problem \eqref{P} always admits a unique positive weak solution $u\in C^{1,\alpha}(\Omega)$, for every $\alpha\in(0,1)$. Moreover,  
$u$ is a global minimum of the action functional $\Ecal$ and the following uniform bounds hold.
\begin{equation}\label{eq:bound}
\bar \e\leq u(x)< K\  \text{ in $\Omega$,   with $\bar \e=Ke^{-\frac{m_1}{\underline{s}}}$ and $m_1=\sup_\Omega \overline{m}(x)$. }
\end{equation}
\end{proposition}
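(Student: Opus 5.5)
Existence comes from the method of sub- and supersolutions, with constants as barriers. Uniqueness comes from a concavity argument after the substitution $w=\ln u$ (or the Brezis--Oswald trick). The minimality of $u$ for $\Ecal$ is then read off from coercivity together with uniqueness.

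For the barriers, take $\overline u\equiv K$. Since $m\ge0$, we get $f(x,K)=-m(x)K\le0$, so $K$ is a supersolution of \eqref{Pf}. Take $\underline u\equiv\bar\e=Ke^{-m_1/\underline s}$. Then
\[
f(x,\bar\e)=\bar\e\Big(s(x)\frac{m_1}{\underline s}-m(x)\Big)\ge \bar\e\,(m_1-m(x))\ge0,
\]
so $\bar\e$ is a subsolution, and $\bar\e\le K$. The nonlinearity $f(x,\cdot)$ is locally Lipschitz on $[\bar\e,K]$, uniformly in $x$, because $\ln$ is smooth away from $0$. The monotone iteration scheme then gives a weak solution with $\bar\e\le u\le K$. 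Equivalently, one can minimize $\Ecal$ over the convex set $\{\bar\e\le v\le K\}$.

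For regularity, $f(\cdot,u)\in L^\infty$, so elliptic $W^{2,p}$ estimates for the Neumann problem hold for all $p<\infty$. Sobolev embedding then gives $u\in C^{1,\alpha}$ for every $\alpha\in(0,1)$.

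The strict inequality $u<K$ needs a separate argument. The function $K-u\ge0$ satisfies
\[
-d\Delta(K-u)+c(x)(K-u)=m(x)u\ge0
\]
with a bounded coefficient $c$. The right-hand side is not identically zero because $\into m=M>0$ and $u\ge\bar\e>0$. The strong maximum principle and Hopf's lemma (using the $C^2$ boundary) then give $K-u>0$ on $\overline\Omega$.

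The lower bound must also hold for every positive solution, not just the one constructed, and this feeds into uniqueness. Let $u$ be any positive solution. Where $u<\bar\e$, we have $f(x,u)/u=s\ln(K/u)-m>m_1-m\ge0$. Testing the equation against $(\bar\e-u)^+$ gives
\[
d\int|\nabla(\bar\e-u)^+|^2=-\int f(x,u)(\bar\e-u)^+\le0,
\]
so $(\bar\e-u)^+$ is constant. If that constant were positive, then $u<\bar\e$ everywhere, and integrating the equation would give $\into f(x,u)=0$. This is impossible, since $f>0$ there. Hence $u\ge\bar\e$. A similar test against $(u-K)^+$ gives $u\le K$.

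For uniqueness, observe that $t\mapsto f(x,t)/t=s\ln(K/t)-m$ is strictly decreasing. Let $u_1,u_2$ be two positive solutions. Test the first equation with $(u_1^2-u_2^2)/u_1$ and the second with $(u_1^2-u_2^2)/u_2$; these test functions are admissible because both solutions are bounded below by $\bar\e$. Subtracting gives the Brezis--Oswald (Picone-type) identity
\[
d\into\Big(\big|\nabla u_1-\tfrac{u_1}{u_2}\nabla u_2\big|^2+\big|\nabla u_2-\tfrac{u_2}{u_1}\nabla u_1\big|^2\Big)=\into\Big(\frac{f(x,u_1)}{u_1}-\frac{f(x,u_2)}{u_2}\Big)(u_1^2-u_2^2).
\]
The left side is nonnegative and the right side is nonpositive, with equality only when $u_1=u_2$ a.e. Hence the solution is unique. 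This identity is the step I expect to need the most care, mainly in justifying the test functions, which is where the a priori lower bound is essential.

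Finally, for the global minimum, note that $F(x,t)\le C$ for all $t$, since $t^2\ln(K/t)$ is bounded above on $(0,\infty)$ and $F=0$ for $t\le0$. Moreover, for large $t$ we have $-F\ge c\,t^2\ln t$, so $\Ecal$ is coercive on $H^1$. It is also weakly lower semicontinuous, by the compact embedding into $L^2$ together with growth control of $F$. So a global minimizer exists, and it is a critical point, hence a solution of \eqref{Pf}.

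It remains to check that this minimizer is positive. For $\Ecal$ itself, $\Ecal(|v|)\le\Ecal(v)$, so we may assume the minimizer is nonnegative. It is not identically zero, because for small constants $\delta$ we have $\Ecal(\delta)=-\into F(x,\delta)<0$: the term $\tfrac{s}{2}\delta^2\ln(K/\delta)$ dominates. The strong maximum principle, applied with the bounded potential $s\ln(K/u)-m\ge -m$ near zero (more precisely, using the lower barrier argument above), gives $u>0$. Since $u$ is a positive solution, uniqueness identifies it with the solution constructed earlier.
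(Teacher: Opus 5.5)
Your existence argument (constant barriers $K$ and $\bar\e$), the a priori bounds from testing with $(\bar\e-u)^+$ and $(u-K)^+$, the regularity, and the strict inequality $u<K$ via the strong maximum principle all match the paper. Your disposal of the ``$u$ constant'' alternative is in fact cleaner.

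The real difference is uniqueness. The paper uses the sliding argument of Berestycki--Hamel--Roques. It sets $\gamma^*=\sup\{\gamma>0:\ u_2>\gamma u_1\}$. If $\gamma^*<1$, then $z=u_2-\gamma^*u_1\ge0$ vanishes at some $x_0\in\overline\Omega$ and satisfies $-d\Delta z+(m+Ls)z>0$; this is where the strict sublinearity of $t\ln(K/t)$ enters. Hopf's lemma excludes a boundary touching point, and the strong maximum principle then gives a contradiction. You instead use the Brezis--Oswald/Picone identity, which the paper only mentions in a remark. Your identity and the sign argument are correct. Your route is purely integral: it needs only $u_i\in H^1\cap L^\infty$ with $u_i\ge\bar\e$ and the strict monotonicity of $f(x,t)/t$, with no Hopf lemma or $C^1(\overline\Omega)$ regularity. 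The sliding method instead works with pointwise information and does not use the variational structure.

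The paper's proof does not actually address the claim that $u$ is a global minimizer of $\Ecal$, so your last paragraph supplies something missing. However, two of its assertions are false as written.
First, $\Ecal$ is not coercive on $H^1(\Omega)$. Since $F(x,t)=0$ for $t\le0$, you get $\Ecal(-c)=0$ for every constant $c>0$.
Second, $\Ecal(|v|)\le\Ecal(v)$ fails in general. Indeed $\Ecal(|v|)-\Ecal(v)=-\into F(x,v^-)$, and $F(x,t)<0$ for $t$ large.

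The fix is to replace $|v|$ by $v^+$. Since $F(x,v)=F(x,v^+)$ and $|\nabla v^+|\le|\nabla v|$ a.e., you have $\Ecal(v^+)\le\Ecal(v)$. So a minimizing sequence may be taken nonnegative. On nonnegative functions the bound $-F(x,t)\ge t^2-C$ does give coercivity, and lower semicontinuity follows from $F\le C$ together with Fatou's lemma.

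Finally, the potential $s\ln(K/u)-m$ is only bounded below near $u=0$, not bounded. You do not need the strong maximum principle here anyway. Your lower-barrier argument applies verbatim to a nonnegative, nontrivial solution: on $\{u<\bar\e\}$ one has $f\ge0$, and a constant $u\in(0,\bar\e)$ is excluded by integrating the equation. This gives $u\ge\bar\e$ directly, after which uniqueness identifies the minimizer with $u$.
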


\begin{proof}
The existence of a positive  weak solution follows from a classical  iteration method when we prove the existence of a sub-solution and a super-solution to \eqref{P}.
First we prove that the carrying capacity $K$ is a  super-solution to  \eqref{P} for every $d$ and for every $m(x)\in \mathcal M$. Indeed  
\[-d \Delta K-f(x,K)=m(x)K\geq 0.\]
Next we show that $0<\bar \e<K$  is a sub-solution to \eqref{P}. 
Indeed 
\[-d \Delta \bar \e-f(x,\bar \e)=\bar \e \left(-s(x) \ln \frac {K}{\bar \e} +m(x)\right)\leq \bar \e\left(-\underline{s} \ln \frac {K}{\bar \e} +m_1\right)=0\]
by the definition of $\bar \e$. Here we show that any solution to \eqref{P} must satisfy $\bar \e\leq u(x)< K$ in $\Omega$.  
\\
The function $(u-K)$ solves
\begin{equation}\label{u-k}
-d\Delta (u-K)=u s(x)\ln \frac K u-m(x) u \ \ \mbox{ in $\Omega$}
\end{equation}
with Neumann boundary conditions. We multiply by $(u-K)^+$ and integrate over $\Omega$, getting
\[d\int_{\Omega}|\nabla (u-K)^+|^2 =\int_{\Omega} u(x)(u-K)^+\left( s(x)\ln \frac K u-m(x) \right)\leq 0\]
and this shows that either $u\leq K$ in $\Omega$ or $u$ is constant on $\Omega$.\\
The function $(\bar \e-u)$ solves 
$-d\Delta (\bar \e-u)=-u s(x)\ln \frac K u+m(x) u$ in $\Omega$ with Neumann boundary conditions. 
We multiply by $(\bar \e-u)^+$ and integrate over $\Omega$, getting
\[d\int_{\Omega}|\nabla (\bar \e-u)^+|^2 =\int_{\Omega} u(x)(\bar \e-u)^+\left( m(x)-s(x)\ln \frac K u \right)\leq 0\]
since when $u\leq \bar \e=Ke^{-\frac {m_1}{\underline{s}}}$ we have $m(x)-s(x)\ln \frac K u\leq 0$.
 This shows that either $u\geq \bar \e$ in $\Omega$ or $u$ is constant on $\Omega$.  Next, if $u$ is constant on $\Omega$, integrating the equation 
 in \eqref{P}, we immediately obtain that $u=Ke^{-\into m/\into s}$, so that
 $\bar \e\le u\le K$ as well.
Finally we show that $u<K$ in $\Omega$. The function $u-K$ satisfies \eqref{u-k}. 
Since the function $g(t)=t\ln \frac Kt$ is $C^1$ in $[\bar \e, K]$ there exists $c(x)$ such that  $u\ln \frac K{u}-K\ln \frac KK=c(x)(u-K)$.  Then
\[
\begin{cases}-d\Delta (u-K)+m(x)(u-K)-s(x)c(x)(u-K)=-m(x)K\le 0 &\text{ in } \Omega\\
u-K\leq 0 &\text{ in }  \Omega.
\end{cases}\]
The Strong Maximum Principle then implies that either  $u-K\equiv 0$ in $\Omega$ or $K-u>0$ in $\Omega$.  When $m(x)\neq 0$ in $L^\infty$ then the equation gives $0=-m(x)K$ which is a contradicton.\\
Next we show that the solution is unique. 
Let us first observe that  \eqref{eq:bound} implies that $f(x,u)\in L^\infty(\Omega)$ and then, by regularity theory $u\in C^{1,\alpha}(\Omega)$ for every $\alpha\in (0,1)$.

Now, assume  by contradiction there exists two positive solutions $u_1,u_2$ of \eqref{P}.  Then $u_i\geq \bar \e>0$ in $\Omega$ for $i=1,2$. 
Following \cite{Berestycki-Hamel-Roques} we define
\[\gamma^*:=\sup \{\gamma\in \R^+: u_2>\gamma u_1 \text{ in } \Omega\}.\]
Assume $0<\gamma^*<1$ and define $z:=u_2-\gamma^* u_1$. Then $z\geq 0$ and by the definition of $\gamma^*$ there exists a sequence of points $x_n\in \Omega$ such that $z(x_n)\to 0$ as $n\to \infty$. Up to a subsequence, $x_n\to x_0\in \bar \Omega$ and $z(x_0)=0$.  
Moreover $z$ satisfies
\[-d\Delta z=-d\Delta u_2+\gamma^*d \Delta u_1=s(x)\left(u_2\ln \frac K{u_2}-\gamma^* u_1\ln \frac K{u_1}\right)-m(x)\left(u_2-\gamma^* u_1\right)
\]
and, since $\gamma^*<1$ 
\[
-d\Delta z+m(x) z>s(x)\left(g(u_2)-g(\gamma^*u_1)\right)
\]
where $g(s)=s\ln \frac Ks$. Since $g(s)$ is Lispchitz continuous in  $[\bar \e,K]$ then there exists $L$ such that 
\begin{equation}\label{eq:a}
-d\Delta z+m(x) z+L s(x)z> 0.
\end{equation}
First we show that $x_0\in \Omega$.  Assume not, then $x_0\in \partial \Omega$ and, since $\Omega$ is smooth we can apply the Hopf Lemma at the function $z$ in $x_0$ getting that $\partial_\nu z(x_0)<0$. But this is impossible since $\partial_\nu z=\partial _\nu u_2-\gamma^* \partial _\nu u_1=0$. Then $x_0\in \Omega$. 
Further $z\geq 0$ in $\Omega$, $z(x_0)=0$ which, together with \eqref{eq:a} imply $z(x)\equiv 0$ in $\Omega$ by the strong maximum principle. But this is impossibile due to the strict inequality in \eqref{eq:a}. Then $\gamma^*\geq 1$ and $u_2\geq u_1$. Interchanging the role of $u_2$ and $u_1$ one can prove also that $u_1\geq u_2$ so that $u_1=u_2$. 

\end{proof}
\begin{remarks}\label{rmk:ipom}
Let us make a couple of remarks concerning the above proposition.
\begin{enumerate}
\item
Note that the result holds for every $m$ such that $-  m_0\leq m\leq \overline{m}(x)$, with $m_{0}\in \R^{+}$,   choosing as upper bound $Ke^{ \frac{m_0}{\underline{s}}}$ instead of $K$. 
\item
Let us observe that the proof of the uniqueness property can be handled arguing by contradiction (see \cite{bros}), supposing that there exist $u_{1}$ and $u_{2}$ two positive solution and  testing the equation of $u_{1}$ with with $u_{2}^{2}/u_{1}$  and the equation solved by  $u_{2}$ with $u_{2}$.
\end{enumerate}
\end{remarks}

Let us establish  the conditions under which there exists  
a constant solution.

\begin{proposition}\label{u-costante}
Let $m\in \Mcal$ and assume \eqref{ipo:s}. Then, Problem \eqref{P}
has a constant solution if and only if $m(x)=\alpha s(x)$ for  $\alpha=\frac{M}{\int_{\Omega}s(x)}$ and the only constant solution is given by  
\begin{equation}\label{eq:Uinfty}
U_\infty: =Ke^{-\frac{M}{\int_{\Omega}s(x)}}.
\end{equation}

\end{proposition}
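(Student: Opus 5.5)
The plan is to prove the two implications directly, working with the pointwise form of the equation, since a constant function has vanishing Laplacian and automatically satisfies the Neumann condition.

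\emph{Necessity.} Assume $u\equiv c$ is a constant solution of \eqref{P}. By Proposition \ref{existence}, $\bar\e\le c<K$, so $c>0$ and $\ln\frac Kc$ is a finite positive number. Since $\Delta c=0$ and $\partial_\nu c=0$, the equation reduces to the pointwise identity
\[
c\Big(s(x)\ln\frac Kc-m(x)\Big)=0\quad\text{a.e. in }\Omega .
\]
Dividing by $c>0$ gives $m(x)=\alpha s(x)$ a.e., with $\alpha:=\ln\frac Kc$, a constant independent of $x$. To identify $\alpha$, integrate this relation over $\Omega$ and use the integral constraint $\into m=M$ from the definition of $\Mcal$. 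This yields $\alpha\into s=M$, that is, $\alpha=M/\into s$. Inverting $\alpha=\ln\frac Kc$ then gives $c=Ke^{-\alpha}=U_\infty$.

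\emph{Sufficiency.} Suppose instead that $m=\alpha s$ with $\alpha=M/\into s$. Substituting $u=U_\infty$, we have $\ln\frac K{U_\infty}=\alpha$. Hence
\[
s(x)U_\infty\ln\frac K{U_\infty}-m(x)U_\infty=U_\infty\big(\alpha s(x)-\alpha s(x)\big)=0,
\]
and the Neumann condition holds trivially. So $U_\infty$ is a positive constant solution. By the uniqueness part of Proposition \ref{existence}, it is the only solution, and in particular the only constant solution.

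There is no real obstacle in this argument. The only points needing care are these. First, positivity of $c$, supplied by the bound \eqref{eq:bound}, is what allows taking the logarithm and dividing by $c$. Second, the identity $m=\alpha s$ holds only almost everywhere, which is the natural sense for $m\in L^\infty$.
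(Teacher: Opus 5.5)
Your proof is correct and follows the same route as the paper: sufficiency by direct substitution plus the uniqueness in Proposition~\ref{existence}, and necessity by reading off the equation for a constant solution $u\equiv c$. Your version is in fact slightly more careful than the paper's, which says the proportionality $m=as$ follows from integrating the equation against $1$. As you note, it is the a.e.\ identity $s(x)\ln\frac{K}{c}=m(x)$, obtained from $\nabla c=0$ in the weak formulation, that gives proportionality; integration and the constraint $\int_\Omega m=M$ only fix the value of $\alpha$.
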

Let us observe that the notation $U_{\infty}$ is motivated by the fact that $U_{\infty}$ turns out to be the limit of the  solution to \eqref{P}, as $d\to +\infty$ (see Proposition \ref{pro:dinfinito} ahead).
\begin{proof}
If  $m(x)=\alpha s(x)$ for $\alpha$ given above, then it is very easy to see that  Problem \eqref{P} has  a constant solution $u$, and by uniqueness  $u\equiv U_\infty\equiv Ke^{-\alpha}$.
Let us now suppose that the unique solution $u  $  is  constant. 
Multiplying the equation in \eqref{P} by $1$ and integrating by parts, one immediately derives that there exists $a\in \R^+$ such that $m(x)=as(x)$, 
and the only possible $a$ is given by $\alpha=M/\int_\Omega s(x)$.
\end{proof}
\begin{remark}
The above result implies that,  if $s(x)$ is constant   Problem \eqref{P}
has a constant solution if and only if $m$ is constant.  
\end{remark}
\begin{remark}\label{rem-par}
Proposition \ref{existence} yields the global  existence of the solution $U$ of 
Problem \ref{PP} and the convergence to $u$ as $t\to+\infty$ for every $u_{0}\in 
C^{0}(\overline{\Omega})$ such that $u_{0}>0$ in $\overline{\Omega}$. Indeed, 
this can be obtained by observing that every positive constant $\eps<\overline{\eps}$ is a 
 positive sub-solution to \eqref{P}, so that we can fix 
$\eps_{0}<\min\{\overline{\eps}, \min_{\overline{\Omega}} u_{0}\}$ as a positive time-independent sub-solution to \eqref{PP}.
By arguing in an analogous way, we can construct a time independent super-solution,  and obtain the existence of the 
positive globally defined solution to problem \eqref{PP} (see e.g. \cite{SV}). 
In addition the positive solution to \eqref{PP}  converges to $u$ the 
unique  positive solution to \eqref{P} (see e.g. \cite[Theorem 10.22 ]{SV}).
If $u_{0}\geq 0$ and $u_{0}\not \equiv 0$, we can exploit \cite[Theorem 3]{aroser}  to deduce that there exists $t_{0},$ and  $g_{0}>0$  such that $u(x,t_{0})\geq g_{0}>0$. This is sufficient to follow the argument above to construct a positive, time independent sub-solution and obtain the desired asymptotic behavior.
\end{remark}

In view of proposition \ref{existence}  the functional $J_{d}: \Mcal\mapsto \R$ introduced in \eqref{eq:Jd}, 
where $u$ is the unique  solution to \eqref{P},  is well defined.
The rest of the paper is devoted to the study of the optimization problems 
\eqref{eq:defmax}.
Let us first show that there exist optimal controls, namely,   $m^*$ and $m_{*}$ 
maximizer and  minimizer respectively.

\begin{proposition}\label{sup-attained}
For every $d>0$, 
there exist optimal controls $m^*,m_* \in \mathcal M$.
\end{proposition}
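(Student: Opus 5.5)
We argue by the direct method, showing that $\Mcal$ is sequentially compact in a topology for which $m\mapsto J_d(m)$ is continuous. The set $\Mcal$ is bounded in $L^\infty(\Omega)$, convex and closed. Hence it is weakly-$*$ compact in $L^\infty(\Omega)$: the pointwise constraints $0\le m\le\overline m$ pass to weak-$*$ limits, since they amount to testing against nonnegative $L^1$ functions, and the integral constraint passes by testing against $1\in L^1(\Omega)$. Since $J_d$ is bounded on $\Mcal$ by \eqref{eq:bound} (indeed $\bar\e|\Omega|\le J_d\le K|\Omega|$), we take a maximizing sequence $m_k\in\Mcal$ and extract a subsequence $m_k\rightharpoonup m^*$ weakly-$*$ in $L^\infty$, with $m^*\in\Mcal$. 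The minimizing sequence is treated in the same way.

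Let $u_k$ be the solution of \eqref{P} with weight $m_k$. By Proposition \ref{existence}, the $u_k$ satisfy $\bar\e\le u_k<K$ uniformly in $k$, because $\bar\e$ depends only on $\sup\overline m$ and $\underline s$. Consequently the right-hand sides $s u_k\ln(K/u_k)-m_k u_k$ are bounded in $L^\infty$ uniformly in $k$. Testing the equation with $u_k$ gives a uniform $H^1$ bound, and elliptic regularity then gives uniform $W^{2,p}$ bounds for all $p<\infty$, hence uniform $C^{1,\alpha}(\overline\Omega)$ bounds. By Ascoli--Arzel\`a, along a subsequence $u_k\to \bar u$ in $C^1(\overline\Omega)$ and weakly in $H^1$, with $\bar\e\le\bar u\le K$.

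We then pass to the limit in the weak formulation
\[
d\into\nabla u_k\cdot\nabla\phi=\into s\,u_k\ln\frac K{u_k}\,\phi-\into m_k u_k\phi,\qquad\phi\in H^1(\Omega).
\]
This is the only delicate point, since $m_k$ converges only weakly. However, $u_k\phi\to\bar u\phi$ strongly in $L^1$ because $u_k\to\bar u$ uniformly, and $m_k\rightharpoonup m^*$ weakly-$*$ in $L^\infty$. The product of a weak-$*$ and a strongly convergent sequence therefore converges: $\into m_ku_k\phi\to\into m^*\bar u\phi$. The Gompertz term converges by uniform convergence, because $t\mapsto t\ln(K/t)$ is continuous on $[\bar\e,K]$; it is the positive lower bound that keeps us away from the singular derivative at $0$.

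Hence $\bar u$ is a positive weak solution of \eqref{P} with weight $m^*$. By the uniqueness part of Proposition \ref{existence}, $\bar u=u^*$, and the whole sequence converges. Therefore $J_d(m_k)=\into u_k\to\into u^*=J_d(m^*)$, so $m^*$ attains the supremum. The same argument applied to a minimizing sequence produces $m_*$.
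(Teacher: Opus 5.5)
Your proof is correct and follows the same route as the paper, whose own proof is only a one-line sketch: the direct method, using the weak-$*$ closedness (hence sequential compactness) of $\mathcal M$ in $L^\infty$ together with the uniform bound \eqref{eq:bound}. You supply the details the paper leaves implicit, namely the continuity of $m\mapsto u$ along weak-$*$ convergent sequences (via your splitting of $\int_\Omega m_k u_k\phi$) and the identification of the limit through uniqueness; the $W^{2,p}$/$C^{1,\alpha}$ step is convenient but not necessary, since strong $L^2$ convergence from Rellich already suffices there.
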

\begin{proof}
The proof relies on a straightforward application of global minimization (or maximization) arguments, exploiting the bound \eqref{eq:bound} and taking into account  that the class 
 $\Mcal$ is closed with respect to the weak-$*$ topology in $L^\infty$.
\end{proof}
\section{The linearized equation}\label{se:lin_eq} 
Let us start this section, by proving some properties
 concerning the linearized problem, which will be repeatedly used in the rest of the paper.

First of all, let us introduce the linearized operator:  let  $u$ be the solution to \eqref{P},  we denote by
\begin{equation}\label{lin}
L_u\psi :=-d\Delta \psi -s(x)\ln \frac K{u} \psi+s(x)\psi +m(x)\psi
\end{equation}
the linearized operator at $u.$ 
The following result will be crucial in our study.
\begin{proposition}\label{prop-lin-inv}
Let $t(x)\in L^\infty(\Omega)$ be such that $\inf_{\Omega}t(x)=\underline{t}>0$ in $\Omega$.  Let $u$ be the unique positive solution to \eqref{P}. Then, for every $d>0$, the linear operator 
\begin{equation}\label{operator-T}
T_u\psi :=-d\Delta \psi -s(x)\ln \frac K{u} \psi +m(x)\psi+t(x)\psi\end{equation}
with Neumann boundary conditions is invertible. Moreover,   the first eigenvalue $\lambda_{1}(T_{u})$ is positive,  and satisfies
\[\lambda_1(T_u):= \inf_{\psi\in H^1(\Omega), \psi\neq 0}
\frac{d\int_\Omega |\nabla \psi|^2- s(x)\ln \frac K{u} \psi^2 +m(x)\psi ^2 +t(x)\psi^2}{\int_\Omega \psi^2}\geq \underline{t}>0.
\]
\end{proposition}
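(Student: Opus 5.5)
The key observation is that $u$ itself is a positive eigenfunction of the operator $-d\Delta - s(x)\ln\frac{K}{u} + m(x)$, with eigenvalue $0$. Indeed, by \eqref{P},
\[
-d\Delta u - s(x)\ln\frac{K}{u}\,u + m(x)u = 0 \quad\text{in } \Omega, \qquad \partial_\nu u=0 \text{ on } \partial\Omega .
\]
By Proposition \ref{existence} we have $u\in C^{1,\alpha}$ and $\bar\eps\le u<K$. Hence the potential $q(x):=-s(x)\ln\frac{K}{u}+m(x)$ lies in $L^\infty(\Omega)$. The plan is to show that the quadratic form
\[
Q(\psi):=\into d|\nabla\psi|^2+q(x)\psi^2
\]
is nonnegative on $H^1(\Omega)$. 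Then $\lambda_1(T_u)\ge \underline t$ follows at once, since $\into t\psi^2\ge \underline t\into\psi^2$.

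To prove $Q\ge0$, I would use the Picone-type substitution $\psi=u\eta$. This is admissible because $u\ge\bar\eps>0$ and $u\in C^{1}\cap L^\infty$, so $\eta=\psi/u\in H^1(\Omega)$ whenever $\psi\in H^1(\Omega)$. Test the equation for $u$ with $u\eta^2\in H^1(\Omega)$; the boundary term vanishes thanks to the Neumann condition. This gives
\[
\into d\nabla u\cdot\nabla(u\eta^2)+q\,u^2\eta^2=0 .
\]
Expanding $|\nabla(u\eta)|^2=\eta^2|\nabla u|^2+2u\eta\nabla u\cdot\nabla\eta+u^2|\nabla\eta|^2$ and subtracting, we get
\[
Q(\psi)=d\into u^2|\nabla\eta|^2\ge0 .
\]
An alternative route uses the characterization of $\lambda_1$ of $-d\Delta+q$ through a positive (super)solution: $u>0$ solves the equation with eigenvalue $0$, so by uniqueness of the positive principal eigenfunction $\lambda_1(-d\Delta+q)=0$. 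I prefer the direct computation.

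Given $\lambda_1(T_u)\ge\underline t>0$, the bilinear form associated with $T_u$ is coercive on $H^1(\Omega)$:
\[
\into d|\nabla\psi|^2+(q+t)\psi^2 \ge \underline t\into\psi^2 .
\]
To upgrade this to full $H^1$-coercivity, take a convex combination with the trivial bound $\into d|\nabla\psi|^2+(q+t)\psi^2\ge d\|\nabla\psi\|_2^2-C\|\psi\|_2^2$, where $C=\|q+t\|_\infty$. Lax--Milgram then gives a unique weak solution $\psi\in H^1(\Omega)$ of $T_u\psi=h$ with Neumann conditions for every $h\in (H^1)'$, in particular for $h\in L^2$. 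Elliptic regularity gives $\psi\in W^{2,p}$ for $h\in L^p$, so $T_u$ is invertible.

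The main obstacle is justifying the Picone identity rigorously: the admissibility of $u\eta^2$ as a test function and of the product rule in $H^1$. This rests entirely on the uniform positive lower bound $u\ge\bar\eps$ and the $C^{1,\alpha}$ regularity from Proposition \ref{existence}. If needed, I would first prove the identity for $\psi\in C^1(\overline\Omega)$ and then conclude by density in $H^1(\Omega)$.
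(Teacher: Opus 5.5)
Your proof is correct and follows the same route as the paper: both show that the quadratic form of $A_u=-d\Delta - s(x)\ln\frac{K}{u}+m(x)$ is nonnegative because $u>0$ solves $A_u u=0$, and then add $\into t\psi^2\ge \underline{t}\into\psi^2$. The paper only asserts $\lambda_1(A_u)=0$ on the grounds that $u$ is a positive eigenfunction, whereas your Picone identity $Q(u\eta)=d\into u^2|\nabla\eta|^2$, with the density argument, actually proves it, and your Lax--Milgram coercivity step makes explicit the invertibility that the paper leaves implicit.
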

\begin{proof}
Let us consider first the linear operator 
\[A_u\psi:=-d\Delta \psi-s(x)\ln \frac K{u} \psi +m(x)\psi\]
with Neumann boundary conditions. The first eigenvalue for $A_u$ is given by
\[\lambda_1(A_u)
:= \inf_{\psi\in H^1(\Omega), \psi\neq 0}
\frac{d\int_\Omega |\nabla \psi|^2- s(x)\ln \frac K{u} \psi^2 +m(x)\psi ^2}{\int_\Omega \psi^2}=0
\]
since $u$ is a positive solution to \eqref{P}, and hence a first positive eigenfunction for $A_u$.
Next, we observe that the first eigenvalue for $T_u$ satisfies 
\[
\begin{split}
\lambda_1(T_u)&\geq \inf_{\psi\in H^1(\Omega), \psi\neq 0}
\frac{d\int_\Omega |\nabla \psi|^2- s(x)\ln \frac K{u} \psi^2 +m(x)\psi ^2 }{\int_\Omega \psi^2}+\inf_{\psi\in H^1(\Omega), \psi\neq 0}
\frac{\int_\Omega t(x)\psi ^2 }{\int_\Omega \psi^2}
\\
&\ge \lambda_1(A_u)+\inf_{\psi\in H^1(\Omega), \psi\neq 0}
\frac{\underline{t}\int_\Omega \psi ^2 }{\int_\Omega \psi^2}=\underline{t}.
\end{split}
\]
\end{proof}
 \begin{remark}\label{rem:ln}
Note that $u$ is a solution to \eqref{P} if and only if $v:=\ln u$ is a solution to the problem 
\begin{equation} 
\begin{cases}
-d\Delta v-d|\nabla v|^{2}=-m(x)+s(x)(\ln K-v)  &\text{in } \Omega
\\
\partial_{\nu}v=0 &\text{on } \partial \Omega,
\end{cases}
\end{equation}
and the optimization problems in \eqref{eq:defmax} become 
\[
\min_{{m\in \Mcal}}\into e^{v}, \qquad \max_{{m\in \Mcal}}\into e^{v}.
\]
This approach has been recently used in \cite{FMP} to show the bang-bang property of maximizer together with regularity property of its positivity set. 
Our context is not exactly included in their setting because of the spatial dependence of the upper bound $\overline{m}$, the opposite sign in the control,
and as our  nonlinearity  does not fit the hypotheses on $Q$ (see \cite[Section 1.1.2.2]{FMP}).
Here, we work directly on the semilinear problem, showing the  bang-bang property of the maximizer following the approach in \cite{manaprcpde}. 
The study of the regularity of the free boundary, possibly using the  approach in \cite{FMP}, remains, in our context,  an interesting  open problem.
\end{remark}

The proof of Theorem \ref{thm:mbang}  is based on the study of the first and second derivatives of $J_{d}(m)$ at the global maximum point $m^{*}$.
First of all,  we  obtain a Taylor expansion for the solution $u$  associated with a fixed $m$.
In order to do this, we have to take into account   the definition of  admissible directions as introduced in \cite[Section 7]{HP} (see also \cite{manaprjmpa}).
An easy adaptation of \cite[lemma 7.2.24]{HP} yields the following characterization.

In our assumptions, the function $\overline m(x)$ may vanish somewhere in $\Omega$. This corresponds to assuming that there are regions inside  the region $\Omega$  that cannot be reached by the treatment $m$,  or that are preferable to avoid.  In the following analysis, we must take these regions into account.

Here, the dependence on $d$ will be omitted.
\begin{lemma}\label{le:gadm}
Let $\Omega'$ be defined in \eqref{eq:omegaprimo}.
A function $g$ is an admissible perturbation if and only if $g$ is  such that
\begin{itemize}
\item[i)] $g\in L^\infty(\Omega),\, \int _{\Omega}g=0$, $g\equiv 0$ on the set $\Omega'$,
\item[ii)]  $\| \chi_{Q^0_n} g^-\|_{\infty}\to 0$ as $n\to \infty$, where $Q^0_n:=\{x\in \Omega\setminus \Omega'  : m(x)\leq \frac 1n \}$ 
\item[iii)]  $\| \chi_{Q^{\overline{m}}_n} g^+\|_{\infty}\to 0$ as $n\to \infty$, where $Q^{\overline{m}}_n:=\{x\in \Omega\setminus \Omega'  : m(x)\geq \overline{m} -\frac 1n \}$.
\end{itemize}
\end{lemma}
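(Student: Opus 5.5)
Following \cite[Lemma 7.2.24]{HP} and \cite[Section 7]{HP}, I take $g$ to be an \emph{admissible perturbation} at $m\in\Mcal$ when there is a sequence $g_n\to g$ a.e.\ and a sequence $\eps_n\to0^+$ with $m+\eps_n g_n\in\Mcal$ for every $n$. I would prove the two implications separately: necessity by reading the three conditions directly off the constraints defining $\Mcal$, and sufficiency by an explicit construction of truncated perturbations.

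\emph{Necessity.} Let $g$ be admissible, with $g_n\to g$ a.e.\ and $m+\eps_n g_n\in\Mcal$. Since every element of $\Mcal$ has integral $M$, we get $\int_\Omega g_n=0$ for all $n$. On $\Omega'$ we have $0\le m+\eps_n g_n\le \overline m=0$ and $m=0$, so $g_n=0$ there and hence $g=0$ on $\Omega'$. The pointwise bounds give
\[
-\frac{m}{\eps_n}\le g_n\le \frac{\overline m-m}{\eps_n}.
\]
Thus on $Q^0_k$ the negative part of $g_n$ is controlled by $1/(k\eps_n)$, and on $Q^{\overline m}_k$ the positive part is controlled in the same way. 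To pass from the $g_n$ to $g$ with the uniform smallness in ii), iii), and with $g\in L^\infty$ and $\int g=0$, I would use the convention of \cite{HP} that $g_n\to g$ in $L^\infty$ (or at least boundedly in a uniform way). This is exactly the point where the precise topology of the definition matters. I expect it to be the main obstacle, because with merely a.e.\ convergence conditions ii) and iii) cannot be deduced directly. The realistic route is to take the equivalence as the \emph{definition}, as done in \cite{manaprjmpa}, and check that the HP proof carries over verbatim.

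\emph{Sufficiency.} Let $g$ satisfy i)--iii). Define the truncations
\[
\tilde g_n:=g\,\chi_{\{1/n<m<\overline m-1/n\}}+g^+\chi_{Q^0_n}-g^-\chi_{Q^{\overline m}_n},
\]
which remove the wrong-signed parts of $g$ near the two constraints. Set $g_n:=\tilde g_n - c_n h$, where $c_n:=\int_\Omega \tilde g_n$ and $h\ge 0$ is a fixed bounded function with $\int_\Omega h=1$, supported where $m$ lies strictly inside $(0,\overline m)$. Such an $h$ exists because, if $\{0<m<\overline m\}$ has measure zero, a sign correction can be made with the symmetric choice. By construction $\int_\Omega g_n=0$.

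Conditions ii) and iii) give $\|g_n-g\|_\infty\to0$, because the discarded pieces are exactly the $L^\infty$-small parts $\chi_{Q^0_n}g^-$ and $\chi_{Q^{\overline m}_n}g^+$, and consequently $c_n\to0$. It remains to choose $\eps_n\to0$ with $m+\eps_n g_n\in\Mcal$. I would take
\[
\eps_n\le \frac{1}{n\left(\|g\|_\infty+1\right)},
\]
further reduced to account for the correction term $c_n h$ on the support of $h$. On $\{1/n<m<\overline m-1/n\}$ the pointwise bounds $0\le m+\eps_n g_n\le\overline m$ then hold by the choice of $\eps_n$. On $Q^0_n$ only nonnegative changes are made, and on $Q^{\overline m}_n$ only nonpositive ones, so the bounds are preserved there as well. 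On $\Omega'$ everything vanishes. Finally, the integral constraint follows from $\int_\Omega g_n=0$. This shows that $g$ is admissible and completes the equivalence.
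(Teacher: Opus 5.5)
The paper gives no argument beyond ``an easy adaptation of \cite[Lemma 7.2.24]{HP}'', so your direct route (truncate near the two constraints, then restore the mass) is the natural one. However, your sufficiency construction fails as written, and it fails exactly in the case that separates this lemma from $\overline{m}\equiv 1$.

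\textbf{The truncation does nothing on $Q^0_n\cap Q^{\overline m}_n$.} On this intersection both correction terms in $\tilde g_n$ are active, so $\tilde g_n=g^+-g^-=g$. Your claim that only nonnegative (resp.\ nonpositive) changes are made on $Q^0_n$ (resp.\ $Q^{\overline m}_n$) is therefore false there. The intersection contains $\{x\in\Omega\setminus\Omega' : \overline m(x)\le 1/n\}$. This set has positive measure for every $n$ whenever $\overline m$ is not essentially bounded below on $\Omega\setminus\Omega'$, which the hypotheses allow. For example, take $\Omega=(0,1)$, $\overline m(x)=x$, $m=0$ on $(0,\tfrac12)$, $m=\tfrac x2$ on $(\tfrac12,1)$. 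Let $g=\sqrt x$ on $(0,\tfrac12)$ and $g=-\tfrac{\sqrt2}{3}$ on $(\tfrac12,1)$. Conditions i)--iii) hold, yet $g_n=\tilde g_n=\sqrt x$ on $(0,\tfrac1n)$, and $m+\eps_n g_n\le\overline m$ fails on $(0,\eps_n^2)$ however small $\eps_n$ is.

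\textbf{Fix.} Truncate against the actual room instead of through the sets $Q$. Put $\tilde g_\eps:=\max\{-m/\eps,\min\{g,(\overline m-m)/\eps\}\}$. This is pointwise feasible because $-m/\eps\le 0\le(\overline m-m)/\eps$, and it vanishes on $\Omega'$. Set $k=k(\eps):=\lfloor 1/(\eps(\|g\|_\infty+1))\rfloor$.
\begin{itemize}
\item Upper clipping can only occur where $\overline m-m\le 1/k$, since elsewhere $(\overline m-m)/\eps>\|g\|_\infty$; that is, only on $Q^{\overline m}_k$.
\item Likewise, lower clipping can only occur on $Q^0_k$.
\end{itemize}
Hence $\|\tilde g_\eps-g\|_\infty\le\|\chi_{Q^0_k}g^-\|_\infty+\|\chi_{Q^{\overline m}_k}g^+\|_\infty\to 0$. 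This works along any prescribed sequence $\eps_n\to0$, so it also covers a tangent-cone definition quantified over all sequences, not only one where you get to choose $\eps_n$.

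\textbf{The mass correction must depend on the sign of $c_\eps:=\int_\Omega\tilde g_\eps$.} If $c_\eps>0$, subtract $c_\eps h$, with $h\ge0$ bounded, $\int_\Omega h=1$, and $h$ supported in $\{m\ge\delta\}$. If $c_\eps<0$, add $|c_\eps|h$ with $h$ supported in $\{\overline m-m\ge\delta\}$. Both sets have positive measure for some $\delta>0$, because $\int_\Omega m=M>0$ and $\int_\Omega(\overline m-m)=\int_\Omega\overline m-M>0$. Since $|\tilde g_\eps|\le|g|$ and $|c_\eps|\le|\Omega|\,\|\tilde g_\eps-g\|_\infty\to0$, the box constraints survive for small $\eps$. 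You cannot rely on $\{0<m<\overline m\}$: it is null whenever $m$ is bang-bang, which is the case of main interest around Theorem~\ref{thm:mbang}.

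\textbf{Necessity.} Do not retreat to taking the equivalence as the definition; that would make the lemma vacuous. You correctly noticed that with a.e.\ convergence, ii)--iii) are not necessary. So the definition must require $g_n\to g$ in $L^\infty(\Omega)$, and then the argument is short. Given $\delta>0$, fix $N$ with $\|g_N-g\|_\infty<\delta$. For every $k\ge 1/(\eps_N\delta)$, on $Q^0_k$ one has $g^-\le g_N^-+\delta\le m/\eps_N+\delta\le 2\delta$. Condition iii) is symmetric, and $\int_\Omega g=0$ and $g=0$ on $\Omega'$ pass to the $L^\infty$ limit.
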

In $\Omega'$ every $m\in \mathcal M$ should coincide with $0=\overline m(x)$, due to the definition of $\mathcal M$,  and any admissible perturbation $g$ must be $0$.  Observe that $|\Omega'|<|\Omega|$.

We can now show that for any $d>0$ the map $m\in \mathcal M \ \mapsto 
u:=u(m)$ is twice Gateaux-differentiable in the direction $g$ as stated in the 
following theorem. 

\begin{theorem}\label{teo-expansion}
Let $m\in \mathcal M$ and $g$ be an admissible perturbation for $m$. 
Let $u_\e$ be a solution to \eqref{P} corresponding to $m_\e(x)=m(x)+\e g$.
Then 
\[
u_\e=u+\e \dot{u}+\frac12\e^2 \ddot{u}+o(\e^2),\quad \text{for $\e\to 0$}
\] 
with respect to the  $H^1$ convergence,  where  $ \dot{u}$ is the unique solution to 
\begin{equation}\label{u1}
\begin{cases}
-d\Delta \dot{u}-s(x)\dot{u}\ln \frac K{u}+m(x) \dot{u}+s(x)\dot{u}=-gu 
& \text{ in } \Omega\\
\partial _\nu \dot{u}=0 & \text{ on } \partial \Omega
\end{cases}\end{equation}
and $\ddot{u}$ is the unique solution to
\begin{equation}\label{u2}
\begin{cases}
-d\Delta \ddot{u}-s(x)\ddot{u}\ln \frac K{u}+m(x) \ddot{u}+s(x)\ddot{u}=
-2g\dot u-  s(x)\frac{\dot u ^2}{u}
& \text{ in } \Omega\\
\partial _\nu \ddot{u}=0 & \text{ on } \partial \Omega.
\end{cases}\end{equation}
\end{theorem}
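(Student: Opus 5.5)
Since $m_\e=m+\e g\in\Mcal$ for small $\e$ (this is what admissibility of $g$ guarantees), Proposition \ref{existence} gives a unique solution $u_\e$ with the uniform bounds $\bar\e\le u_\e<K$, with $\bar\e$ independent of $\e$ because $m_\e\le\overline m$. The plan is to work on the interval $[\bar\e,K]$, where $g(t)=t\ln\frac Kt$ is smooth, so the singularity of the Gompertz term at $0$ never enters. The linear problems \eqref{u1} and \eqref{u2} are governed by $L_u=T_u$ with $t(x)=s(x)\ge\underline s>0$. Proposition \ref{prop-lin-inv} then gives $\lambda_1(L_u)\ge \underline s$, hence invertibility of $L_u$ with the coercive estimate $\underline s\|\psi\|_2^2\le\langle L_u\psi,\psi\rangle$. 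This also yields an $H^1$ bound, since $d\|\nabla\psi\|_2^2\le \langle L_u\psi,\psi\rangle+C\|\psi\|_2^2$. The right-hand sides $-gu$ and $-2g\dot u-s\dot u^2/u$ lie in $L^2$, because $\dot u\in L^\infty$ by elliptic regularity and $u\ge\bar\e$. So $\dot u$ and $\ddot u$ exist, are unique, and are bounded in $C^{1,\alpha}$.

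First I show $u_\e\to u$ in $H^1$ (indeed uniformly). Set $z_\e=u_\e-u$ and write $s(g(u_\e)-g(u))=s\,g'(u)z_\e+sR_\e$, where $g'(t)=\ln\frac Kt-1$ and $|R_\e|\le C z_\e^2$ on $[\bar\e,K]$. Subtracting the equations gives
\[
L_u z_\e=-\e g u_\e+sR_\e .
\]
Testing with $z_\e$ and using the coercivity from Proposition \ref{prop-lin-inv} gives $\underline s\|z_\e\|_2^2\le C\e\|z_\e\|_2+C\|z_\e\|_3^3$, which is not closable directly. I would instead argue by compactness. The $z_\e$ are bounded in $C^{1,\alpha}$, so along a subsequence $u_\e\to \tilde u$ uniformly, and $\tilde u$ solves \eqref{P} with $m$. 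By the uniqueness in Proposition \ref{existence}, $\tilde u=u$, so $\|z_\e\|_\infty\to0$. The cubic term is then absorbed, $\|R_\e\|_2\le C\|z_\e\|_\infty\|z_\e\|_2$, and we get $\|z_\e\|_{H^1}=O(\e)$.

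Next come the first and second order expansions. Put $r_\e=z_\e-\e\dot u$. It satisfies
\[
L_u r_\e=-\e g z_\e+sR_\e ,
\]
whose right-hand side is $O(\e^2)$ in $L^2$, so $\|r_\e\|_{H^1}=O(\e^2)$. For the second order, I refine the Taylor expansion to $R_\e=\tfrac12 g''(u)z_\e^2+O(|z_\e|^3)$ with $g''(t)=-1/t$. Substituting $z_\e=\e\dot u+r_\e$, the remainder $q_\e=z_\e-\e\dot u-\frac{\e^2}2\ddot u$ satisfies
\[
L_u q_\e=-\e g(z_\e-\e\dot u)-\frac{s}{2u}\bigl(z_\e^2-\e^2\dot u^2\bigr)+O(|z_\e|^3).
\]
Each term on the right is $o(\e^2)$ in $L^2$: the first is $\e\cdot O(\e^2)$, the second is bounded by $C\|r_\e\|_2(\e\|\dot u\|_\infty+\|z_\e\|_\infty)=O(\e^3)$, and the third is $O(\e^3)$ using the $L^\infty$ smallness. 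One more application of the coercivity of $L_u$ then gives $\|q_\e\|_{H^1}=o(\e^2)$, which is the claimed expansion.

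The main obstacle is the step $z_\e=O(\e)$, where the nonlinear remainder must be controlled before any quantitative smallness is available. The compactness-plus-uniqueness argument resolves this; it needs uniform $C^{1,\alpha}$ bounds in $\e$, which follow from the uniform $L^\infty$ bounds on $f(x,u_\e)$ provided by \eqref{eq:bound}. If one wants to avoid compactness, an alternative is the implicit function theorem for $(m,u)\mapsto -d\Delta u-f(x,u)$ on $W^{2,p}$ near $u$, using that $L_u$ is an isomorphism, which gives $C^2$ dependence directly.
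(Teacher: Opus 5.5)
Your route differs from the paper's. The paper never Taylor-expands the nonlinearity around $u$. It works with the exact difference quotient $w_\e=(u_\e-u)/\e$ and writes $\frac1\e\bigl(\ln\frac K{u_\e}-\ln\frac Ku\bigr)=-t_\e w_\e$, with $t_\e=\int_0^1(\tau u_\e+(1-\tau)u)^{-1}d\tau\in[1/K,1/\bar\e]$. Then $w_\e$ solves the \emph{linear} equation $-d\Delta w_\e+(m-s\ln\frac Ku+s u_\e t_\e)w_\e=-gu_\e$, and the extra potential satisfies $s u_\e t_\e\ge \underline s\bar\e/K>0$. Proposition \ref{prop-lin-inv} therefore gives coercivity uniformly in $\e$, hence $H^1$ bounds on $w_\e$, with no prior knowledge of how fast $u_\e\to u$. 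The limit is identified as $\dot u$ by weak compactness and uniqueness for \eqref{u1}, and the second order is treated the same way with $(w_\e-\dot u)/\e$.

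You instead keep the fixed operator $L_u$. You first get qualitative uniform convergence by $C^{1,\alpha}$ compactness plus the uniqueness in Proposition \ref{existence}, absorb the quadratic remainder, and then read off the expansion from linear estimates. The paper's device uses the monotonicity of the logarithm to get coercivity for free. Yours is more quantitative: it gives explicit remainder rates and strong $H^1$ convergence directly, without subsequences, at the price of the preliminary compactness step.

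There is one step you have not justified. After the first stage you only know $\|z_\e\|_{H^1}=O(\e)$ and $\|z_\e\|_\infty\to 0$, but the later estimates need the rate $\|z_\e\|_\infty=O(\e)$.
\begin{itemize}
\item With only qualitative smallness, $\|sR_\e\|_2\le C\|z_\e\|_\infty\|z_\e\|_2$ is $o(\e)$, not $O(\e^2)$. So $\|r_\e\|_{H^1}=O(\e^2)$ does not follow.
\item Consequently the bound $C\|r_\e\|_2(\e\|\dot u\|_\infty+\|z_\e\|_\infty)$ is only $o(\e)$.
\item The cubic term $\|z_\e\|_\infty^2\|z_\e\|_2$ is also only $o(\e)$, not $o(\e^2)$.
\end{itemize}
For $n\le 3$ you could replace $L^\infty$ by Sobolev embeddings ($H^1\hookrightarrow L^4, L^6$) in these H\"older splits, but the theorem is stated for every $n$.

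The fix is short. $L_u$ is an isomorphism from $\{\psi\in W^{2,p}:\partial_\nu\psi=0\}$ onto $L^p$, by Proposition \ref{prop-lin-inv} for injectivity plus standard $W^{2,p}$ theory. Take $p\ge 2$ with $p>n/2$ and apply this to $L_u z_\e=-\e g u_\e+sR_\e$, which gives $\|z_\e\|_\infty\le C\|z_\e\|_{W^{2,p}}\le C\e+C\|z_\e\|_\infty^2$. Since $\|z_\e\|_\infty\to0$, the quadratic term is absorbed and $\|z_\e\|_\infty=O(\e)$. This is the counterpart of the uniform $L^\infty$ bound on $w_\e$ that the paper records at the end of its Step 1 and uses in Step 2. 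With it, all your later estimates close as you claim.

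Your implicit-function-theorem alternative on $W^{2,p}$ also works, provided you note that uniqueness of positive solutions identifies the IFT branch with $u_\e$.
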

The derivative $\dot{u}$ is usually called the sensitivity (see \cite{LenhartWorkman2007}, \cite{ding-finotti-lenhart}).
\begin{proof}
Since $m_\e\in \mathcal M$ for every $\e$ small enough, Proposition \eqref{existence} yields the existence of a unique positive solution $u_{\eps}$ of \eqref{P}. Taking $u_{\eps}$ as test function  and exploiting \eqref{eq:bound},
one has
\begin{equation}\label{eq:1-3}
d\int |\nabla u_\e|^2 +\into m_\e(x)u_\e^2=\into s(x) (u_\e^2) \ln \frac K{u_\e}  \leq C 
\end{equation}
so that, up to a subsequence as $\e \to 0$, $u_{\eps}\to u$, weakly in $H^1(\Omega)$, strongly in $L^2(\Omega)$ and  a.e. in $\Omega$.
In addition, passing to the limit in \eqref{eq:1-3}, taking into account that
$u$ is the unique positive solution to \eqref{P}, 
 and using the weak lower semicontinuity of the $L^{2}$ norm of the gradient, we deduce
\[\begin{split}
d\|\nabla u\|_{2}^{2}&\leq \liminf d\|\nabla u_{\e}\|_{2}^{2}\leq \limsup d\|\nabla u_{\e}\|^{2}=\into\left(s(x)u^{2}\ln\frac{K}{u}-m(x)u^{2}\right)
\\
&=d\|\nabla u\|_{2}^{2}.
\end{split}\]
So that, $u_{\e}\to u$ strongly in $H^{1}(\Omega)$.
\\
{\bf Step 1.} In this step we prove that the sequence $w_\e=\frac{u_\e-u}\e$  
converges to $\dot u$ strongly in $H^{1}(\Omega)$.
\\
Let us first observe that $w_{\e}$  satisfies 
\[-d \Delta w_\e+m(x)w_\e-s(x)\frac 1\e\left[u_\e\ln \frac K{u_\e}-u\ln \frac K{u}\right]=-g(x)u_\e.\]
that we rewrite as
\[
-d \Delta w_\e+m(x)w_\e-s(x)\ln \frac K{u} w_\e-s(x)
\frac {u_\e}\e\left[\ln \frac K{u_\e}-\ln \frac K{u}\right]=-g(x)u_\e .
\]
The mean value theorem yields
\begin{equation}\label{diff-log}
\frac {1}\e\left[\ln \frac K{u_\e}-\ln \frac K{u}\right]=-\left(\int_0^1 \frac{1}{tu_\e+(1-t)u}dt \ \right) w_\e=-t_\e(x)w_\e\end{equation}
with $\frac {1}{\bar \e}\geq t_\e(x)\geq \frac {1}{K}>0$ and $t_\e(x)\to \frac 1{u(x)}$ pointwise.
Then  $w_\e$ satisfies
 \begin{equation} \label{w-epsilon}
-d \Delta w_\e+m(x)w_\e-s(x)\ln \frac K{u} w_\e+s(x)u_\e t_\e(x)w_\e=-g(x)u_\e
\end{equation}
with $s(x)u_\e t_\e(x)\geq \frac{\underline{s} \bar \e} K$.
By Proposition \ref{prop-lin-inv}, we have
 \[
\begin{split}
 \frac{\underline{s} \bar \e} K\int_\Omega w_\e^2 &\leq  
d\int_\Omega |\nabla w_\e|^2- \int_{\Omega}w_\e^2 \Big(s(x)\ln \frac K{u} -m(x)  -s(x)u_\e t_\e(x)\Big)\\
&=
-\int_\Omega g(x) u_\e w_\e.
\end{split}\]
Then,   the $L^{2}$ boundedness of  $w_{\e}$ follows by applying
H\"older  inequality and taking into account that $g(x)$ and $u_\e(x)$ are uniformly bounded.
Then \eqref{w-epsilon} implies that also the $H^1(\Omega)$ norm of $w_\e$ is 
bounded; so  $w_\e\rightharpoonup  \dot u$ weakly in $H^1$, strongly in $L^2$ 
and   almost everywhere in $\Omega$.  
In view of \eqref{eq:bound} one deduces that  $t_\e(x)\to \frac 1{u}$ almost everywhere in $\Omega$.
Passing to the limit in \eqref{w-epsilon} we obtain that  $\dot u$ solves  \eqref{u1} and  Proposition \ref{prop-lin-inv} imply  that $\dot u$ is unique and given by
$\dot u:=(L_{u})^{-1}(-gu)$ and,  $\dot u\neq 0$ for any $g\neq 0$.
The $H^{1}$ convergence can be obtained by using \eqref{w-epsilon}, \eqref{u1} 
and  arguing as before.  Finally, we deduce that $w_\e$ is uniformly bounded 
in $L^\infty(\Omega)$,  by observing  that $g(x)$,  $u_\e$, and all the 
coefficients in \eqref{w-epsilon}  are uniformly bounded in $L^\infty(\Omega)$ and 
by exploiting Proposition \ref{prop-lin-inv}.
\\
{\bf Step 2.} In this step we obtain the second order expansion of $u$ with respect to $m$.
\\
Using \eqref{w-epsilon} we have that the function $z_\e=\frac{w_\e-\dot{u}}\e$  satisfies
\beq \label{equation-z}
-d \Delta z_\e+m(x)z_\e-s(x)z_\e \ln \frac K{u}+s(x)z_\e=-g(x)w_\e+s(x)\frac {w_\e} {\e} \left( 1- u_\e t_\e(x) \right).
\eeq
Next, recalling that $u_{\e}=\dot u+\e w_{\e}$, \eqref{diff-log} yields
\[
\begin{split}
\frac 1\e
\left(1-t_\e(x)u_\e\right)&=\frac 1{\e} \left(1-\int_0^1\frac {u_\e}{tu_\e+(1-t)u}dt\right)=w_\e\int_0^1\frac {1-t}{tu_\e+(1-t)u}dt
\\
&=w_\e \tilde t_\e(x)
\end{split}
\]
where $\tilde t_\e(x)\to \frac 1{u}\int_0^1t dt=\frac 1{2u}$.  
In view of  Proposition \ref{prop-lin-inv}, we have
\[
\begin{split}
\underline{s}\int_\Omega z_\e^2
\leq
& d \int_\Omega |\nabla z_\e|^2- s(x)\ln \frac K{u} z_\e^2 +m(x)z_\e^2 +s(x)z_\e^2=\\
&-\int_\Omega g(x) w_\e z_\e+\int_\Omega s(x)w_\e^2 z_\e 
\tilde t_\e(x).
\end{split}\]
Note that $g\in L^{\infty}(\Omega)$,  $w_\e$ is uniformly bounded in $L^{\infty}(\Omega)$, by Step 1, and 
 $\tilde t_\e(x)\leq \frac C{\bar \e} $, so that   H\"older inequality yields the
 $L^{2}$ boundedness of $z_{\e}$. Then, we deduce that $z_{\e}$ is bounded in
  $H^1(\Omega)$ taking into consideration \eqref{equation-z}.
Then, up to a subsequence, $z_\e \rightharpoonup \frac 12 \ddot u$ weakly in $H^1(\Omega)$, strongly in $L^2(\Omega)$ and almost everywhere in $\Omega$.
The conclusion follows arguing as in Step 1, also exploiting   the positiveness of the 
first eigenvalue of $L_{u}$ in proposition \ref{prop-lin-inv} to obtain  the 
uniqueness of  $\ddot u$.
\end{proof}

As a consequence of the previous results we deduce that the functional $J(m)$ is twice  Gateaux-differentiable at $m$ in the direction $g$, for any $d>0$, as stated in the following result.
\begin{corollary}\label{cor}
Let $m\in \mathcal M$ and $g$ be an admissible perturbation for $m$. 
Let $u_\e$ be a solution to \eqref{P} corresponding to $m_\e(x)=m(x)+\e g$.
It holds, as $\e\to 0$
\begin{equation}\label{conv-int}
J(m_\e)=\int_\Omega  u_\e  =\int _\Omega u+\e \int _\Omega \dot u+\frac 12\e^2\int_\Omega \ddot u+o(\e^2).\end{equation}
\end{corollary}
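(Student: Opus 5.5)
The plan is to integrate the $H^1$ expansion of Theorem \ref{teo-expansion} over $\Omega$. Since $\Omega$ is bounded, the map $v\mapsto \into v$ is a continuous linear functional on $L^2(\Omega)$, and hence on $H^1(\Omega)$. Cauchy--Schwarz gives
\[
\Big|\into v\Big|\le |\Omega|^{1/2}\|v\|_{2}\le |\Omega|^{1/2}\|v\|_{H^1(\Omega)} .
\]

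Concretely, Theorem \ref{teo-expansion} defines the remainder
\[
r_\e:=u_\e-u-\e\dot u-\tfrac12\e^2\ddot u ,
\]
and states that $\|r_\e\|_{H^1(\Omega)}=o(\e^2)$ as $\e\to0$. Here $\dot u$ and $\ddot u$ are the unique solutions of \eqref{u1} and \eqref{u2}; uniqueness comes from Proposition \ref{prop-lin-inv}, applied with $t(x)=s(x)\ge \underline{s}>0$. Integrating the identity $u_\e=u+\e\dot u+\frac12\e^2\ddot u+r_\e$ over $\Omega$ and using linearity of the integral gives
\[
J(m_\e)=\into u+\e\into \dot u+\tfrac12\e^2\into\ddot u+\into r_\e .
\]
By the bound above, $\big|\into r_\e\big|\le |\Omega|^{1/2}\|r_\e\|_{H^1(\Omega)}=o(\e^2)$, and this is exactly \eqref{conv-int}.

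The only point needing care is whether Theorem \ref{teo-expansion} really gives an $o(\e^2)$ remainder along the whole family $\e\to0$, and not just along a subsequence. Its proof extracts weakly convergent subsequences, so I would check that uniqueness of the limits upgrades this to full convergence. For $w_\e=(u_\e-u)/\e$, every subsequence has a further subsequence converging to the unique solution $\dot u$ of \eqref{u1}, so the whole family converges. For $z_\e=(w_\e-\dot u)/\e$, the same argument gives convergence to $\frac12\ddot u$ by uniqueness of the solution of \eqref{u2}.

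The relation between the remainder and these quotients is
\[
r_\e=\e^2\big(z_\e-\tfrac12\ddot u\big),
\]
so $r_\e=o(\e^2)$ is equivalent to $z_\e\to\frac12\ddot u$. For the corollary, strong convergence in $L^2$ is already enough, because the integral is $L^2$-continuous. That $L^2$ convergence follows from the compact embedding $H^1\hookrightarrow L^2$ together with the uniform $H^1$ bound on $z_\e$ established in Step 2 of that proof. So the corollary needs nothing beyond Theorem \ref{teo-expansion} and this subsequence argument.
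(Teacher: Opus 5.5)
Your proof is correct and follows the paper's own argument. The paper also writes the remainder as $\e^2\bigl(z_\e-\tfrac12\ddot u\bigr)$, uses that $z_\e\to\frac12\ddot u$ strongly in $L^2$, and uses that $v\mapsto\int_\Omega v$ is $L^2$-continuous on a bounded domain. Your additional observation, that uniqueness of $\dot u$ and $\ddot u$ upgrades the subsequential limits to convergence of the whole family $\e\to0$, is a point the paper leaves implicit.
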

\begin{proof}
By the previous theorem $z_\e \to \frac 12  \ddot u$ weakly in $H^1$ and strongly in $L^2$. 
Since $\Omega$ is bounded \eqref{conv-int} holds.
\end{proof}
In the following results we  introduce the adjoint state $p$ at $m$.

\begin{lemma}\label{lem:p-pos}
Assume \eqref{ipo:s}.
For any $m\in \mathcal M$ and for every $d>0$ there exists a unique solution to 
\begin{equation}\label{adjapp}
\begin{cases}
-d\Delta p-s(x)p\ln \frac K{u}+m(x)p+s(x)p=1 
& \text{ in } \Omega\\
\partial _\nu p=0 & \text{ on } \partial \Omega.
\end{cases}\end{equation}
 Moreover $p\in C^{1,\alpha}(\Omega)$ for every $\alpha \in (0,1)$ and, there exists
a positive constant $C=C(\underline{s}, \overline{s}, K,  |\Omega|)$ such that 
\begin{equation}\label{eq:prop}
\inf_{\Omega} p >0, 
\quad \|p\|_{H^1}\leq \frac C{(\min\{d,\underline{s}\})^\frac 12}, \quad \|p\|_{\infty}\leq C.
\end{equation}
\end{lemma}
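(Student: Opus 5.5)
Existence and uniqueness come directly from Proposition \ref{prop-lin-inv} with $t(x)=s(x)$, which is admissible because $\inf t\geq \underline{s}>0$ by \eqref{ipo:s}. The operator in \eqref{adjapp} is exactly $T_u$ with this choice, i.e. the linearized operator $L_u$ in \eqref{lin}. Its bilinear form on $H^1(\Omega)$ is coercive with constant $\underline{s}$ in the $L^2$ part, so Lax--Milgram gives a unique weak solution $p$. Regularity then follows as for $u$. The zeroth-order coefficient $-s\ln\frac Ku+m+s$ is bounded in $L^\infty$ thanks to \eqref{eq:bound}, since $\ln\frac Ku\leq \frac{m_1}{\underline{s}}$. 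Elliptic $W^{2,q}$ estimates with Neumann conditions on the $C^2$ domain, together with Sobolev embedding, give $p\in C^{1,\alpha}$ for every $\alpha\in(0,1)$, once $p\in L^\infty$ is known.

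For the $H^1$ bound I test \eqref{adjapp} with $p$. Write $a(\psi)$ for the quadratic form of $A_u$; its first eigenvalue is $0$ by the proof of Proposition \ref{prop-lin-inv}, so $a(\psi)\ge0$. This gives
\[
a(p)+\into s p^2=\into p\le |\Omega|^{1/2}\|p\|_2 ,
\]
hence $\|p\|_2\le |\Omega|^{1/2}/\underline{s}$. To control the gradient, I will use $a(p)\ge 0$ together with the identity $d\into|\nabla p|^2=\into p+\into (s\ln\frac Ku-m-s)p^2$. The bound $\ln\frac Ku\le m_1/\underline s$ brings in $m_1$, so I prefer a sharper route through the ratio $q=p/u$. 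Since $u$ is a positive eigenfunction of $A_u$ with eigenvalue $0$, the Picone-type identity gives $a(p)=d\into u^2|\nabla q|^2$. This, combined with $d|\nabla p|^2\le 2d u^2|\nabla q|^2+2d q^2|\nabla u|^2$, yields the bound of the form $C/\min\{d,\underline s\}^{1/2}$. If the constant turns out to depend on $\|\nabla u\|$, I will control that term by testing \eqref{P} with $u$.

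For positivity, $p\ge0$ follows from testing with $p^-$: the form of $T_u$ is coercive, so $\lambda_1(T_u)\into (p^-)^2\le -\into p^-\le0$, which forces $p^-=0$. Strict positivity, $\inf p>0$, then comes from the strong maximum principle and the Hopf lemma on the $C^2$ domain, using that the right-hand side equals $1>0$. Equivalently, I can build an explicit subsolution: the constant $\delta=1/\sup(\text{coefficient})$ satisfies $T_u\delta\le1$, and comparison, valid because $\lambda_1(T_u)>0$, gives $p\ge\delta>0$.

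The main obstacle I expect is the $L^\infty$ bound with a constant independent of $d$. A plain bootstrap gives a constant that deteriorates as $d\to0$. My plan is to work with $q=p/u$. A direct computation using \eqref{P} gives
\[
-d\Delta q-2d\frac{\nabla u\cdot\nabla q}{u}+s q=\frac1u \le \frac1{\bar\e},
\qquad \partial_\nu q=0 .
\]
At an interior maximum of $q$, and at a boundary maximum via Hopf and the Neumann condition, this yields $\underline s\max q\le 1/\bar\e$. Hence $\|p\|_\infty\le K/(\underline s\bar\e)$ uniformly in $d$. The same equation at the minimum of $q$ gives a uniform positive lower bound, $q\ge 1/(\overline{s}K)$, which gives $p\ge\bar\e/(\overline s K)$ independently of $d$.
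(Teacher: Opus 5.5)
Your proof is correct. It agrees with the paper on existence and uniqueness (Proposition \ref{prop-lin-inv} with $t=s$), on $p\ge0$ via testing with $p^-$ plus the strong maximum principle, and on $\|p\|_2\le|\Omega|^{1/2}/\underline{s}$. It departs from the paper in two places.

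\textbf{Gradient bound.} The paper simply estimates $\into(s\ln\frac Ku-m)p^2\le\overline{s}\ln\frac{K}{\bar\e}\,\|p\|_2^2$ in the tested identity. Your ground-state (Picone) route $a(p)=d\into u^2|\nabla q|^2$ also works. However, it does not remove the dependence on $m_1=\sup\overline{m}$ as you hoped, because $m_1$ re-enters through $\|q\|_\infty\le 1/(\underline{s}\bar\e)$ in the term $2d\into q^2|\nabla u|^2$. So it buys nothing over the direct estimate, and in both versions the constant also depends on $\sup\overline{m}$, which the statement's list omits.

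\textbf{$L^\infty$ bound.} This is where your route is genuinely better. The paper appeals to classical elliptic regularity for $-d\Delta p+sp=fp+1$. As written, that does not give a constant uniform as $d\to0^+$. Your maximum principle for $q=p/u$ does: it yields $\bar\e/(\overline{s}K)\le p\le K/(\underline{s}\bar\e)$ uniformly in $d$ and in $m\in\Mcal$.

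\textbf{A rigor point.} Since $s,m$ are only $L^\infty$, $q$ is not $C^2$, and evaluating $s$ at a maximum point is meaningless. Run the argument weakly instead. Subtract the weak form of \eqref{P} tested with $p\phi$ from that of \eqref{adjapp} tested with $u\phi$ to get $d\into u^2\nabla q\cdot\nabla\phi+\into s u^2 q\phi=\into u\phi$. Then take $\phi=(q-k)^+$ with $k=1/(\underline{s}\bar\e)$, and $\phi=(k'-q)^+$ with $k'=1/(\overline{s}K)$. This gives both bounds without Hopf's lemma. The qualitative boundedness of $p$ needed here comes, for each fixed $d$, from the usual bootstrap starting from $L^2$, so there is no circularity.
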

\begin{proof}
Since the first eigenvalue of the operator $L_{u }$ is positive by proposition \ref{prop-lin-inv}, the problem \eqref{adjapp} admits a unique weak solution in $H^1(\Omega)$.   By regularity theory $p\in C^{1,\alpha}(\Omega)$ for every $\alpha \in (0,1)$.\\
 We multiply  \eqref{adjapp} by $-p^-$ and integrate over $\Omega$.  Then,  Proposition \ref{prop-lin-inv} yields
\[0\leq \lambda_1(L_{u })\int_\Omega \left(p^-\right)^2\leq -\int_\Omega  p^-\leq 0\
\]
showing that $p^-=0$ in $\Omega$.  The strict inequality then follows from the Strong maximum principle.  
Taking as test function $p$ in \eqref{adjapp} we deduce
\beq\label{ff1}
d\int_{\Omega}|\nabla p|^2+\int_{\Omega} p^2\left(m(x)+s(x)-s(x)\ln \frac K{u}\right)=\int_\Omega p\leq |\Omega|^{\frac 12} \|p\|_2\eeq
Moreover the coercivity of the operator, see Proposition \ref{prop-lin-inv},  gives 
\[\underline{s}\|p\|_2^2\leq \lambda_1(L_u)\|p\|_2^2\leq d\int_{\Omega}|\nabla p|^2+\int_{\Omega} p^2\left(m(x)+s(x)-s(x)\ln \frac K{u}\right)\]
so that
\begin{equation}\label{eq:pl2}
\|p\|_2\leq \frac 1{\underline{s}}|\Omega|^{\frac 12}.
\end{equation}
Then,  \eqref{ff1} implies that
\[
\begin{split}
 \min\{d,\underline s\}\|p\|^2_{H^1}\leq d\int_{\Omega}|\nabla p|^2+\int_\Omega s(x) p^2
\leq |\Omega|^{\frac 12} \|p\|_2+\overline{s}\ln \frac K{\bar \e}\|p\|_2^2\leq C
\end{split}\]
yielding the second conclusion. 
To conclude the proof, let us observe that $p$ solves the equation
\[-d\Delta p+s(x)p=f(x)p+1 \]
with $f(x):=s(x)\ln \frac K{u}-m(x)\in L^{\infty}(\Omega)$.
Then, taking into account \eqref{ipo:s} and \eqref{eq:pl2} the conclusion follows by exploiting classical elliptic regularity estimates.
\end{proof}

\section{Bang-bang property of the maximizer} \label{se:bang_bang_max}
Let us first make some observation enlightening the importance of the $L^{1}-
L^{\infty}$ constraint imposed in the class $\Mcal$. Indeed, note that if $m_{1}
<m_{2}$ then the positive solution to \eqref{P} associated with $m_{1}$ turns out 
to be a positive super-solution to \eqref{P} with $m_{2}$. Then, arguing as in 
proposition \ref{existence}, we deduce that $u_{1}\geq u_{2}$. As a consequence,
if we maximize the functional $J(m)$ without the constraint on the average $
\int_\Omega m=M$, we have that $\max J(m)$ is attained at   $m=0$.

Moreover, the maximization problem over the larger class $\mathcal M_{\geq}:=\{m\in L^\infty(\Omega) \ : \ 0\leq m(x)\leq \overline{m}(x) \text{ and }\int_\Omega m(x)\geq M\}$ is also attained in the class $\mathcal M$   due to the saturation of the constraint.
\\
On the other hand, let us show that the maximum problem on the class $\Mcal_{M}$
analogous to \eqref{eq:minclassebig} has no solution  as the following result shows.
\begin{proposition}\label{pro:nomax}
The supremum
\begin{equation}\label{eq:maxclassebig}
\sup_{m\in\Mcal_{M}} J(m), \quad \Mcal_{M}:=\left\{m\in L^{\infty}(\Omega) : m\geq 0, \int_{\Omega}m=M\right\} 
\end{equation}
is not achieved
\end{proposition}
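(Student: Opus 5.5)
Suppose by contradiction that some $m_0\in\Mcal_M$ attains the supremum in \eqref{eq:maxclassebig}, with state $u_0$. Since $m_0$ is not subject to an upper bound, every bounded perturbation $g$ with $\int_\Omega g=0$ and $g\geq 0$ wherever $m_0$ is small is admissible in the sense of Lemma \ref{le:gadm}, with condition iii) void. The idea is to produce a competitor in $\Mcal_M$ with strictly larger total population. I would do this by concentrating the treatment: moving the mass of $m_0$ onto a set of very small measure should leave most of the domain essentially untreated, so that $J$ approaches $K|\Omega|$ in a suitable sense.

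First I would show that $\sup_{\Mcal_M}J=K|\Omega|$. Take $m_n=\frac{M}{|B_n|}\chi_{B_n}$, where $B_n=B_{1/n}(x_0)\cap\Omega$, and let $u_n$ be its state. By Proposition \ref{existence} we have $u_n<K$, although the lower bound $\bar\e$ now degenerates. On $\Omega\setminus B_n$ the function $u_n$ is a supersolution-type object for the problem with $m=0$. I would test the equation with $(K-u_n)$ and aim for an estimate
\[
d\|\nabla u_n\|_2^2+\int_\Omega s\,u_n(K-u_n)\ln\tfrac{K}{u_n}\;\lesssim\;\int_{B_n} m_n\,u_n(K-u_n).
\]
I would then control the right-hand side using $u_n\le K$. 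The difficulty is that this right-hand side is of size about $MK^2$ and does not vanish.

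To handle this I would use capacity in dimension $n\geq 2$: a set of vanishing capacity cannot hold the state down. The cleaner route is a comparison. Let $U$ solve $-d\Delta U=sU\ln\frac KU$ on $\Omega\setminus\overline{B_n}$, with $U=0$ on $\partial B_n$ and Neumann conditions elsewhere. Then $u_n\ge U$, because $u_n>0$ on $\partial B_n$. One checks that $U\to K$ in $L^1$ when the capacity of $B_n$ tends to $0$. This gives $J(m_n)\to K|\Omega|$.

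The conclusion then follows: for any $m\in\Mcal_M$ we have $J(m)<K|\Omega|$ by the strict inequality $u<K$ in Proposition \ref{existence}. Hence the supremum $K|\Omega|$ cannot be attained.

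The main obstacle is dimension one, where points have positive capacity and $U$ does not tend to $K$. In that case I would instead compare $m_0$ with the competitor $\tilde m=M\delta$-approximations and compute the limit explicitly. Alternatively, I would argue by first variation. Admissibility of the perturbations above forces the switching function to satisfy $\vfi_0\ge c$ everywhere, with equality on $\{m_0>0\}$. Second-order information from Theorem \ref{teo-expansion}, concentrating a perturbation inside $\{m_0>0\}$ as in the bang-bang argument, would then give a contradiction, because without an upper bound $m_0$ cannot be bang-bang.
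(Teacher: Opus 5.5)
Your proposal is correct in substance. It differs from the paper in two ways.

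\textbf{The case $n\ge2$.} You use the same key fact as the paper (a point has zero capacity), but through a different device. The paper takes the weak $H^1$ limit of the states of mollified $m_n$. It removes the concentrating term by density of functions vanishing near $x_0$, and identifies the limit as $K$ by an energy argument. You instead bound $u_n$ from below by the positive solution $U_n$ of the Gompertz problem in $\Omega\setminus\overline{B_n}$.

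This needs two ingredients. The first is the sublinear comparison principle, which holds because $t\mapsto\ln(K/t)$ is decreasing; you can adapt the $\gamma^*$ argument of Proposition \ref{existence}. The second is your ``one checks'' step. It goes through as follows. Take $\psi_n\in H^1(\Omega)$ with $\psi_n=1$ on $B_n$ and $\|\psi_n\|_{H^1}\to0$. Then $K(1-\psi_n)$ is admissible for the perforated energy. Hence the $\limsup$ of the minimal energies is at most $\mathcal E_0(K)=\min_{H^1}\mathcal E_0$, where $\mathcal E_0$ is the energy with $m=0$. Lower semicontinuity and uniqueness of the positive solution then give $U_n\to K$. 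This route never has to identify the limit of $u_n$.

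\textbf{The case $n=1$.} You are right that concentration cannot work here. The limit state satisfies $d\,[u']_{x_0}=M\,u(x_0)$ with $u(x_0)>0$, so it is not $K$. The paper's one-dimensional paragraph relies on $C^1$ convergence of $u_n$, which fails for exactly this reason; its own computation exhibits the jump of $u_n'$ across $x_0$.

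Your second-variation route is the right one, and it reduces to one line via Theorem \ref{thm:mbang}. Suppose $m_0\in\Mcal_M$ attained the supremum, and fix a constant $L>\|m_0\|_\infty$, so that $L|\Omega|>M$. Then $m_0$ maximizes $J$ over $\Mcal$ with $\overline m\equiv L$, hence $m_0=L\chi_{\omega^*}$ a.e. Since $m_0<L$, this forces $|\omega^*|=0$, contradicting $\int_\Omega m_0=M$.

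This proves the proposition in every dimension. The capacity argument only adds, for $n\ge2$, that the supremum equals $K|\Omega|$. Your ``$M\delta$-approximations'' suggestion is not an argument as stated and can be dropped.
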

\begin{proof}
 Let $x_0\in \Omega$ and $m_n(x)=M\rho_n(x-x_0)$ in $\Omega$, where $\rho_n\in 
C_c^\infty(\R^n)$ is a sequence of mollifliers such that $\rho_n\geq 0$, supp$
(\rho_n)\subset \overline{B(0,\frac 1n)}$ and $\int_{\R^n} \rho_n =1$. Then $m_n\in 
\Mcal_{M}$ for every $n$.  Since $0\leq m_n(x)\leq \|m_{n}\|_{\infty}\leq C$ we can argue as in Proposition \ref{existence} getting the existence of a solution $u_n$ to \eqref{P} corresponding to 
$m_n$ that satisfies $\overline{\eps}_{n}\leq u_n<K$   where $\overline{\eps}_{n}=Ke^{-\frac{\|m_{n}\|_{\infty}}{\underline{s}}}$.
So that $1\leq \frac{K}{u_{n}}$, and 
$u_{n}\ln\frac{K}{u_{n}}\geq 0$ is uniformly bounded from above by a positive constant. Then
\begin{equation}\label{eq:enun}
d\int_\Omega |\nabla u_n|^2+\int_\Omega m_n(x) u_n^2=\int_\Omega s(x) u_n^2 \ln \frac K{u_n}\leq C.\end{equation}
Then, up to a subsequence, $u_n\to u$ weakly in $H^1(\Omega)$ and strongly in $L^2(\Omega)$ and pointwise a.e. in $\Omega$. 

Let $\varphi \in C^\infty (\Omega)$ such that  $\varphi$ satisfies supp$\varphi \subset \Omega\setminus\{x_0\}$. It results
\[\left|\int_\Omega m_n u_n \varphi\right|< K \int_\Omega m_n |\varphi|\to KM|\varphi(x_0)|=0.\]
Since in $\R^n$, with $n\geq 2$, the capacity of a point is zero, the set $\{\varphi \in C^\infty(\Omega):\varphi(x_0)=0\}$ is dense in $H^1(\Omega)$. This implies that $u$ is a weak solution to 
\beq\label{pas-1}
\begin{cases}
-d\Delta u=u s(x)\ln \frac Ku 
& \text{ in } \Omega\\
\partial _\nu u=0 & \text{ on } \partial \Omega.
\end{cases}\eeq
Moreover, we have that for every $n$ the energy $\mathcal E_n(u_{n})$ (see 
\eqref{eq:defE}) associated with $m_n$ is strictly negative; indeed taking into
account \eqref{eq:enun}, one has
\[
\Ecal_{n}(u_{n})=\Ecal_{n}(u_{n})-\langle\Ecal'_{n}(u_{n}),u_{n}\rangle=-\frac14\into s(x)u_{n}^{2}\leq -\frac{1}4\underline{s}\overline{\eps}_{n}^{2} |\Omega|.
\]
 Recalling that $u_n$ is a sequence of minumum of the energy $\mathcal E_n(u)$,  and $u_n\to u$, then $u$ has to be a minimum of the energy associated to problem \eqref{pas-1}. This implies that $u=K$ in $\Omega$ and  $J(m_n)=\int_\Omega u_n\to K|\Omega|$
as $n\to \infty$ and shows that $
\max_{m\in\Mcal_{M}} J(m)$ is not attained. Indeed, as in Proposition \ref{existence} we have that $u<K$ for every $m\in\Mcal_{M}$; moreover $J(m)=K|\Omega|$ if and only if $u=K$ is a solution to \eqref{P}
which implies $m\equiv 0\not \in \Mcal_{M}$. 
Ley us briefly discuss the case $n=1$, considering 
 $\Omega =(0,1)$ and $x_0=\frac 12$.   Take 
\[m_{n}=M \begin{cases}
n^{2}(x-\frac12+\frac1n)& \frac12-\frac1n\leq x\leq \frac12
\\
-n^{2}(x-\frac12-\frac1n)& \frac12< x\leq \frac12+\frac1n
\\
0 & \text{elsewhere.}
\end{cases}\]
Integrating the equation satisfied by $u_n$ we have 
\[
-u'_n(x)=\int_0^x \left( s \ln \frac{K}{ u_n} -m_n   \right) u_n \]
Moreover, by applying  Sobolev embedding we obtain that   $u_n\to u$ in $C^1(\Omega)$.  We take two sequences $x_n=\frac12-\frac2n$ and $x'_n\in \frac12+\frac2n$.  We have
\[-u'_n(x_n)=\int_0^{\frac12-\frac2n} s \ln \frac{K}{ u_n}  u_n\to \int_0^{\frac12} s \ln \frac K u   u\]
while
\[-u'_n(x'_n)=\int_0^{\frac12+\frac2n} \left( s \ln \frac{K}{ u_n} -m_n\right) u_n\to \int_0^{\frac12} \left( s \ln \frac K u \right) u-u\left(\frac 12\right).\]
The convergence of $u_n\to u$ in $C^1$ then implies that $ u\left(\frac 12\right)=0$.  We can then pass to the limit in the equation for $u_n$ getting that $u$ is a solution to \eqref{pas-1} and we can conclude as in the previous case.
\end{proof}

\subsection{Proof of Theorem \ref{thm:mbang} }

Let us now focus our attention on the proof of Theorem \ref{thm:mbang}.
\\
As already observed, the proof strictly follows the argument in \cite{manaprcpde,FMP}.  
Let us first prove some  auxiliary but crucial results.  Some of these  are standard in optimal control theory,  but we prefer to include the details to make the work more accessible to everyone. 
A crucial tool in this and the fore-coming sections relies on the switching function $\vfi$ introduced in \eqref{eq:switch}. Let us observe that, in our context, 
$\vfi:=up$, as the problems \eqref{P} and \eqref{adjapp} admit a unique solution.

\begin{lemma}\label{le:firstinfo}
Let $m\in \Mcal$ and  $\vfi$  be the corresponding solutions to  \eqref{eq:switch}. Then it results,
\beq\label{switch}
\int_\Omega \dot u=-\int_\Omega g \vfi ,
\eeq
for every admissible perturbation $g$.
Moreover, if $m^{*}$ is a solution to the maximization problem in \eqref{eq:defmax} and,   $\vfi^{*}$ is the corresponding solution to \eqref{eq:switch}, it holds
\beq\label{dis}
\int_\Omega g \vfi^{*}\geq 0 
\eeq
for every admissible perturbation $g$ for $m^*$. 
\end{lemma}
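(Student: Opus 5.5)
The identity \eqref{switch} will follow from the standard adjoint duality between the sensitivity problem \eqref{u1} and the adjoint problem \eqref{adjapp}. The inequality \eqref{dis} will then come from the first-order optimality of $m^*$ along admissible directions, combined with Corollary \ref{cor}.

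\textbf{Step 1: duality.} Both \eqref{u1} and \eqref{adjapp} are governed by the same self-adjoint operator $L_u$ with Neumann boundary conditions. By Theorem \ref{teo-expansion} and Lemma \ref{lem:p-pos}, $\dot u\in H^1(\Omega)$ and $p\in H^1(\Omega)\cap L^\infty(\Omega)$. I test \eqref{adjapp} with $\dot u$ and \eqref{u1} with $p$. The bilinear forms
\[
d\into\nabla p\cdot\nabla\dot u+\into\Big(m+s-s\ln\tfrac Ku\Big)p\,\dot u
\]
coincide in the two weak formulations, so subtracting them gives
\[
\into \dot u=-\into g\,u\,p .
\]

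\textbf{Step 2: identify $\vfi=up$.} The remaining point is to show that $up$ is the solution $\vfi$ of \eqref{eq:switch}, as the paper asserts just before the lemma. Set $\vfi=up$, so $p=\vfi/u$. Expanding $\Delta(\vfi/u)$ and using $-d\Delta u=su\ln\frac Ku-mu$ from \eqref{P}, I multiply the equation for $p$ by $u$. This should produce exactly the drift term $2d\nabla u\cdot\nabla\vfi/u$, the zero-order terms $2d|\nabla u|^2\vfi/u^2+\vfi(2s\ln\frac Ku-2m-s)$, and the source term $u$ appearing in \eqref{eq:switch}. The Neumann condition transfers because $\partial_\nu u=\partial_\nu p=0$.

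Uniqueness for \eqref{eq:switch} then follows from uniqueness for \eqref{adjapp}. Indeed, any solution $\psi$ of \eqref{eq:switch} gives a solution $\psi/u$ of \eqref{adjapp}, which is legitimate since $u\ge\bar\e>0$ and $u\in C^{1,\alpha}$. This yields \eqref{switch}.

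This computation, done in the weak sense with only $C^{1,\alpha}$ regularity, is the only delicate bookkeeping. I would carry it out at the level of weak formulations, using test functions of the form $\phi/u$ and $u\phi$, which are admissible in $H^1(\Omega)$ thanks to the bound \eqref{eq:bound} on $u$.

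\textbf{Step 3: optimality.} Let $g$ be admissible for $m^*$. By the definition of admissible perturbation (Lemma \ref{le:gadm}), $m^*+\e g\in\Mcal$ for all sufficiently small $\e>0$. Maximality then gives $J(m^*+\e g)\le J(m^*)$. By Corollary \ref{cor},
\[
0\ge \frac{J(m^*+\e g)-J(m^*)}{\e}=\into\dot u^*+O(\e).
\]
Letting $\e\to0^+$ gives $\into\dot u^*\le0$, which by \eqref{switch} is exactly $\into g\vfi^*\ge0$.

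Only one-sided perturbations ($\e>0$) are used, since $-g$ need not be admissible. This is why the conclusion is an inequality rather than an equality.
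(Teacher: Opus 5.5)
Your proposal is correct and follows the paper's proof: you cross-test \eqref{adjapp} with $\dot u$ and \eqref{u1} with $p$, identify $\varphi=up$, and pass to the one-sided difference quotient via \eqref{conv-int} using the maximality of $m^*$. The only difference is that you explicitly check that $up$ solves \eqref{eq:switch} (the computation does yield exactly that equation), which the paper merely asserts before the lemma; like the paper, you also tacitly use that $m^*+\e g\in\Mcal$ for all small $\e>0$.
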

\begin{proof}
We recall that the map $m\mapsto u_m$ is differentiable at the second order in the sense of Gateaux at $m$ in the direction $g$, and the derivatives $\dot{u}$ and $\ddot{u}$  satisfy \eqref{u1} and \eqref{u2} respectively.\\
Let $\dot{u}^{*}$ the derivative of $u^{*}$ with respect to $m$ along the direction $g$. In the sequel we will simply write $\dot{u}$ in place of
$\dot{u}^{*}$ to simplify the notation.

Let $u^*$ be the solution to \eqref{P} associated with $m^*$ and $p^*$ the corresponding adjoint state (solution of \eqref{adjapp}).
Multiplying  \eqref{adjapp} (written for $u^{*}$)
by $\dot{u}$, and \eqref{u1} by $p^*$ and integrating by parts immediately give \eqref{switch}. Moreover,
let $g$ be an admissible direction. 
 Exploiting  the  optimality of $m^{*}$, 
 \eqref{conv-int} and \eqref{switch}, we obtain
 \[
 0\geq\lim _{\e \to 0}\frac{J(m^{*}+\e g)-J(m^*)}{\e}=\int_\Omega \dot u=-\int_\Omega g u^{*}p^{*}=-\int_\Omega g \vfi^{*} .
\]
\end{proof} 
Associated with $m^*$ we can define the sets
\beq\label{insiemi}
\begin{split}
&
\Omega_0:=\{x \in \Omega\setminus \Omega': m^*(x)=0\}
 \\ 
&
\Omega^*:=\{x \in \Omega: 0<m^*(x)<\overline m(x\}
\\& 
\Omega_1:=\{x \in \Omega\setminus \Omega': m^*(x)=\overline m(x)\}
\end{split}
\eeq
up to a set of zero measure,   where $\Omega'$ is  defined in \eqref{eq:omegaprimo}.

As a consequence of the first order information included in Lemma \ref{le:firstinfo} we can prove the following result. 

 \begin{proposition}\label{prop:11.2}
Assume
$m^*$ is a solution to the maximization problem in \eqref{eq:defmax},  and let $\vfi^*$  be the corresponding solutions to  \eqref{eq:switch}.  Then there exists a constant $c>0$ such that 
\beq \label{insiemilivelloapp}
\vfi^*=c \ \hbox{in } \Omega^*
\eeq
and 
\beq \label{inequalities}
\forall \ (x_0,x^*,x_1)\in \Omega_0\times \Omega^*\times \Omega_1  \hbox{ we have } \vfi^{*}(x_1)\leq   \vfi^{*}(x^*)\le  \vfi^{*}(x_0).
\eeq
\end{proposition}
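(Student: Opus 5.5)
The plan is to use only the first-order condition \eqref{dis} of Lemma \ref{le:firstinfo}, applied to well-chosen admissible perturbations $g$ concentrated near Lebesgue points of the sets in \eqref{insiemi}. First I fix $\vfi^*=u^*p^*$. Since $u^*\ge\bar\eps>0$ by \eqref{eq:bound} and $\inf p^*>0$ by Lemma \ref{lem:p-pos}, and both are $C^{1,\alpha}$, the function $\vfi^*$ is continuous and strictly positive. So every point of $\Omega$ is a Lebesgue point of $\vfi^*$, and any constant level of $\vfi^*$ is automatically positive. This gives $c>0$ once the level-set statement is proved.

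The main construction is a "transfer" perturbation. Take two disjoint measurable sets $A,B\subset\Omega\setminus\Omega'$ of positive measure, with indicator-type weights. I set $g=\frac{1}{|A|}\chi_{A}h_A-\frac{1}{|B|}\chi_B h_B$, normalized so that $\int g=0$. To make $g$ admissible in the sense of Lemma \ref{le:gadm}, $g$ may be positive only where $m^*$ stays uniformly away from $\overline m$, and negative only where $m^*$ stays uniformly away from $0$.

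Concretely, for $\delta>0$ I use the truncated sets
\[
\Omega_0^\delta=\{m^*=0,\ \overline m\ge\delta\},\qquad \Omega^{*,\delta}=\{\delta\le m^*\le\overline m-\delta\},\qquad \Omega_1^\delta=\{m^*=\overline m\ge\delta\}.
\]
Their union over $\delta$ exhausts $\Omega_0,\Omega^*,\Omega_1$ up to null sets. Perturbations that are positive on $\Omega_0^\delta$ or $\Omega^{*,\delta}$, negative on $\Omega^{*,\delta}$ or $\Omega_1^\delta$, and bounded, then satisfy i)–iii) of Lemma \ref{le:gadm}. For instance, on $\Omega_1^\delta$ we have $g\le0$, so $g^+=0$ there, while on $\Omega_0^\delta$ we have $g^-=0$.

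Next I localize and apply \eqref{dis}. Take density/Lebesgue points $x_a\in\Omega_0^\delta$ (or $\Omega^{*,\delta}$) and $x_b\in\Omega^{*,\delta}$ (or $\Omega_1^\delta$), and let $A=B_r(x_a)\cap(\text{set})$, $B=B_r(x_b)\cap(\text{set})$. Then \eqref{dis} gives
\[
\frac1{|A|}\int_A\vfi^*\ \ge\ \frac1{|B|}\int_B\vfi^*,
\]
and letting $r\to0$ with continuity of $\vfi^*$ yields $\vfi^*(x_a)\ge\vfi^*(x_b)$. Here $x_a$ is the point receiving treatment ($g>0$) and $x_b$ the point losing it.

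The three choices give the following. Receiving on $\Omega_0$ and losing on $\Omega^*$ gives $\vfi^*(x^*)\le\vfi^*(x_0)$. Receiving on $\Omega^*$ and losing on $\Omega_1$ gives $\vfi^*(x_1)\le\vfi^*(x^*)$. Both signs are allowed on $\Omega^*$, so using two points of $\Omega^*$ in both roles gives $\vfi^*(x)=\vfi^*(y)$ for all density points $x,y$ of $\Omega^*$. Hence $\vfi^*=c$ a.e. in $\Omega^*$, and by continuity on its points. This yields \eqref{insiemilivelloapp} and \eqref{inequalities}, first for density points of the $\delta$-sets and then, letting $\delta\to0$, a.e. (the sets being defined only up to null sets). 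If $\Omega^*$ is empty, $c$ is still defined as a separating value via the $\Omega_0$–$\Omega_1$ comparison, obtained with the same transfer argument.

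The delicate step is admissibility. Condition iii) of Lemma \ref{le:gadm} requires uniform smallness of $g^+$ on $\{m^*\ge\overline m-1/n\}$, which is why I restrict to $\Omega^{*,\delta}$ and use bounded $g$ vanishing outside $A\cup B$. The additional requirement $\overline m\ge\delta$ ensures that $m^*+\eps g\in\Mcal$ for small $\eps$. This is the one point where the non-constant $\overline m$ (and its zero set $\Omega'$) genuinely enters, and I would check it carefully.
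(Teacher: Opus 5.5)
Your proposal is correct and takes essentially the same route as the paper. Both arguments use only the first-order condition \eqref{dis}, tested against transfer perturbations $g$ that are positive on one truncated set and negative on another. The paper argues by contradiction with two sets $E_1,E_2$ of equal measure, whereas you localize at density points and use the continuity of $\vfi^*=u^*p^*$.

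Your extra truncation $\overline m\ge\delta$ on $\Omega_0$ and $\Omega_1$ is a genuine refinement. The paper's comparison with $E_1\subset\Omega_0$ (or $E_2\subset\Omega_1$) leaves it implicit, but without it conditions ii)/iii) of Lemma \ref{le:gadm} can fail where $\overline m$ is small.
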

\begin{proof}
The proof closely follows the one of \cite[Theorem 7.2.26]{HP}; we include it here for the sake of clarity.\\
Define, for $n\in \N$ the set 
\begin{equation}\label{eq:omegastarn}
\Omega^*_n:=\{x\in \Omega: \frac 1 n\le m^*\le \overline{m}-\frac 1n\}.
\end{equation} 
We will prove that $\vfi^*$ is constant in $\Omega_n^*$. 
Since $\Omega^*=\cup_{n\in \N} \Omega_n^*$ this will prove the first assertion.\\
By contradiction, assume that $\vfi^*$ is not constant in $\Omega_n^*$.  Then it is possible to find two measurable sets $E_1$, $E_2$ contained in $\Omega_n^*$ such that 
\beq\label{rel-ins}
|E_1|=|E_2| \  , \ \ \ \ \ \int_{E_1} \vfi^*<\int_{E_2} \vfi^*.
\eeq
We can then choose $g$ defined by
\beq\label{gg}
g=\begin{cases}
1 & \text{ in } E_1\\
-1  & \text{ in } E_2\\
0 & \text{ in } \Omega \setminus \left(E_1\cup E_2\right).
\end{cases}
\eeq
By applying Lemma \ref{le:gadm}, $g$ is an admissible perturbation for $m^*$ and we have
\beq\label{dis-int}
<J'(m^*),g>=-\int_\Omega g \vfi^*= - \int_{E_1} \vfi^*+ \int_{E_2} \vfi^*>0.
\eeq

This  contradicts the optimality condition \eqref{dis} and concludes the proof of the first assertion. The positivity of $c$ follows from proposition \ref{existence} and Lemma \ref{le:firstinfo}.
\\
To prove the second one, let us assume, by contradiction, that there exist two sets 
of positive measure $ E_1\subset \Omega_0$ and $  E_2\subset \Omega^*_n$   such 
that   \eqref{rel-ins} holds. Then, using \eqref{gg} as before, we have 
\eqref{dis-int} and a contradiction arises. The other inequalities in \eqref{inequalities} can be proved analogously.
\end{proof}

\begin{remark}
Let us observe that arguing as in 
\cite[Theorem 7.2.26]{HP} it is also possible  to show that, if $m^*\in \mathcal M$ satisfies \eqref{insiemilivelloapp} and \eqref{inequalities} then it satisfies \eqref{dis} for every admissible perturbation $g$, so that $m^*$ is a local maximizer for $J(m)$.
\end{remark}
\begin{remark}
Analogous results hold  for a solution $m_{*}$ of the minimization problem in \eqref{eq:defmax}. In this case we obtain that
\[
\int_\Omega g \vfi_{*}\leq 0 \]
for every admissible perturbation $g$ for $m_*$.  We can then obtain \eqref{insiemilivelloapp} as in the previous case, while \eqref{inequalities} becomes
\[
\forall \ (x_0,x_*,x_1)\in \Omega_0\times \Omega^*\times \Omega_1\,  \Rightarrow\, 
\vfi_*(x_0)\leq   \vfi_*(x_*)\le  \vfi_*(x_1).
\]
\end{remark}

\begin{proof}[Proof of theorem \ref{thm:mbang}]
The proof follows a mix of the arguments in \cite[Theorem I]{manaprcpde} and \cite[Theorem 3]{FMP}.
Multiplying \eqref{adjapp} by $\ddot{u}$, \eqref{u2} by $p^{*}$ and integrating by parts, gives
\beq\label{secordapp}
\int_\Omega \ddot u=-2\int_\Omega g \dot u p^{*}-\int_\Omega s(x)\frac{\dot u^2}{u^*}p^{*} 
\eeq
By \eqref{u1}, we have
\[-g=\frac{-d\Delta \dot{u}-s(x)\dot{u}\ln \frac K{u^*}+m^*(x) \dot{u}+s(x)\dot{u}}{u^*}\]
which, together with \eqref{secordapp} gives
\[
\int_\Omega \ddot u=2\int_\Omega \frac {p^{*}}{u^*}\left(-d\dot u \Delta \dot u\right)+
\int_\Omega \frac{p^{*}}{u^*}\dot u^2\left(-2s(x)\ln \frac K{u^*}+2m^*(x)+s(x)\right)
\]
that we rewrite as
\beq 
\int_\Omega \ddot u=2d\int_\Omega \frac {p^{*}}{u^*}|\nabla  \dot u|^2+
\int_\Omega \dot u^2\underbrace{\left(\frac{p^{*}}{u^*}\left(-2s(x)\ln \frac K{u^*}+2m^*(x)+s(x)\right) -d\Delta \left( \frac {p^{*}}{u^*}\right)\right)}_{:=V_{m^*}(x)}.
\eeq
As $u^{*}$ and $p^{*}$ are $C^{1,\alpha}(\Omega) $ and $u^{*}\geq \bar{\eps}$, we deduce that  $V_{m^*}(x), \frac {p^{*}}{u^*}$ are uniformly bounded in $L^{\infty}(\Omega)$. In addition, there exist $A_1,A_2>0$ such that 
\[ J''( m^*)[g,g]=\int_\Omega \ddot u\geq A_1 d\int_\Omega |\nabla  \dot u|^2-A_2
\int_\Omega \dot u^2.
\]
Taking into account \eqref{u1} and applying  \cite[Proposition A.2]{FMP}  we have
\[ J''(m^*)[g,g]\geq A_1\|g\|^2_{W^{-1,2}(\Omega)} -B_2\|g\|^2_{W^{-2,2}(\Omega)}.\]
Let us now conclude the argument first for dimension $n\geq 2$.
Note that, if $g$ is supported in a sufficiently small set then an application of  \cite[Proposition 2.1.6]{FMP} yields,
\beq\label{sec-ord} 
J''(m^*)[g,g]\geq \frac 12 A_1\|g\|^2_{W^{-1,2}(\Omega)}.
\eeq
Now assume, by contradiction, that the set $ \Omega^{*}:=\{x\in \Omega: 0<m^*(x)<\overline{m}(x)\}$  has positive measure, so that the sets 
$\Omega^{*}_{n}$ (see \eqref{eq:omegastarn}) have positive measure for $n$ sufficiently large. 
To reach a contradiction,  it is enough to produce an admissible perturbation $g$ as in \eqref{gg}, with $\int_\Omega g=0$, where 
 $E_1\cup E_2\subset \Omega_{n}^{*}$ and $|E_1\cup E_2|$ so small that \eqref{sec-ord} holds.
Indeed,  by proposition \ref{prop:11.2} we have
\[ J'(m^*)[g]=\int_\Omega \dot u=-\int_\Omega gu^*{p^{*}} =-c \int_\Omega g=0\]
so that
\beq\label{sec-ord2}
J(m)-J(m^*)=\frac 12 \e^2  J''(m^*)[g,g] \geq \frac 14 \e^2 A_1\|g\|^2_{W^{-1,2}(\Omega)}>0  
\eeq
which gives a contradiction.  In the one dimensional case, we can show that
\eqref{sec-ord} holds as well, by arguing as in \cite[Proposition 2.1.8]{FMP} choosing $r^{2}\phi_{r}$ in the place of the cut-off function $\phi_{r}$ introduced in (2.1.18) and exploiting the embedding $W^{2,2}(\R)$ in $L^{\infty}(\R)$.
\\
As a conclusion, we deduce that 
\[\text{meas}\left( \{x\in \Omega\setminus \Omega': 0<m^*(x)<\overline{m}(x)\}\right)=0.\]
Then there exists a measurable set $\omega^*\subset \Omega\setminus \Omega'$ such that 
 $m^*(x)=\overline{m}(x)\chi_{\omega^*}$ and
\[\int_{\omega^*} \overline{m}=\int_\Omega  m^*=M.\]

Now assume that the set $\{x\in \Omega\setminus \Omega': \varphi^*=c^*\}$
has positive measure, with $c^*:=\sup_{x\in \Omega_1} \varphi^*(x)$.  Suppose by contradiction that both the sets 
$F:=\{x\in \Omega\setminus \Omega': \varphi^*=c^*\}\cap \Omega_0$ and $G:=\{x\in \Omega\setminus \Omega': \varphi^*=c^*\}\cap \Omega_1$ have positive measure,  so that the sets 
$F_n:=\{x\in \Omega\setminus \Omega': \varphi^*=c^*\}\cap \Omega_0\cap \Omega_{n}^{*}$ and $G_n:=\{x\in \Omega\setminus \Omega': \varphi^*=c^*\}\cap \Omega_1\cap \Omega_{n}^{*}$  (see \eqref{eq:omegastarn}) have positive measure for $n$ sufficiently large.  Let $x_F$ and $x_G$ be Lebesgue points of $F_n$ and $G_n$ respectively and $E_1:=B(x_F,r_F)\cap F_n$, $E_2:=B(x_G,r_G)\cap G_n$ where $r_F$ and $r_G$ are so small such that estimate \eqref{sec-ord} holds and such that $|E_1|=|E_2|.$ We take $g$ as in \eqref{gg}. It is an admissible perturbation and, as before, by \eqref{sec-ord}, we obtain \eqref{sec-ord2} which gives a contradiction. Then, either the set $\{x\in \Omega\setminus \Omega': \varphi^*=c^*\}$ is either contained in $\omega^*$ or in $\Omega\setminus (\Omega'\cup \omega^{*})$, yielding the conclusion. 
\end{proof}

\begin{remark}\label{11.7}
Since $m^*(x)=\overline{m}(x)\chi_{\omega^*}\in \mathcal M$ and $\omega^*\subset \Omega\setminus \Omega'$,  then 
\[\int_\Omega m^*=M<\int_{\Omega\setminus \Omega'} \overline m (x) .\]
This implies that $|\omega^*|<|\Omega\setminus \Omega'|.$

\end{remark}

 \begin{remark}
In the context of logistic nonlinearities and assuming $\overline{m}=1$, 
it has been shown that \eqref{inequalities} holds with strict inequalities (see \cite[Proposition 3]{manaprjmpa}), by exploiting the equation satisfied by the switching function $\varphi^*:=u^{*}p^*$.
\\
Proving that this property holds  in the case of Gompertz nonlinearities and under our general assumptions on $s$ and $\overline{m}$ remains an interesting open problem (see also \cite[Theorem 3]{FMP}). 
\end{remark}
\section{Qualitative properties of  minimizers}\label{se:miniprop}
In this Section the dependence on  $d$ is not relevant, so that it will be omitted.
Let us first observe that arguing as at the beginning of Section \ref{se:bang_bang_max}, minimizing $J$ without the constraint on the average $\int_\Omega m=M$, one  has that $\min J(m)$ is attained at $m= \overline{m}$. 
\\
In addition, the minimization problem for $J $ over the larger class $\mathcal M_{\leq}:=\{m\in L^\infty(\Omega) :  0\leq m(x)\leq \overline{m}(x) \text{ and }\int_\Omega m(x)\leq M\}$ is attained on the set $\mathcal M$ due to saturation of the constraint.

Let us first tackle the minimization problem 
 \begin{equation}\label{eq:minclassebig}
\min_{m\in\Mcal_{M}} J(m), \quad \Mcal_{M}:=\left\{m\in L^{\infty}(\Omega) : m\geq 0, \int_{\Omega}m=M\right\} 
\end{equation}

When $f(x,t)$ is of logistic type, the minimum problem in \eqref{eq:minclassebig} is solved by the constant weight $M/|\Omega|$ (see \cite{Lou}). The counterpart of this result in
our context is obtained, assuming that $s(x)\equiv s$ is constant,   when minimizing the cost functional $\widetilde{J}: \Mcal_{M}\to (0,+\infty)$ 
 
\begin{equation}\label{eq:Jtilde}
\widetilde J(m)=\int_\Omega \ln u \ dx.
\end{equation}
Indeed, the following result holds.

\begin{lemma}\label{lemma:minlogu}
Assume that $s(x)\equiv s$ is constant. Then, $M/|\Omega|\in \Mcal_{M}$ and 
it is the unique  global minimum point for the functional $\widetilde J(m)$ with
the corresponding solution $U_{\infty}$, given in \eqref{eq:Uinfty}, namely 
\[
\min_{m\in \mathcal M_M}\widetilde J(m)=\widetilde J\left(\frac{M}{|\Omega|}\right)=\int_{\Omega} \ln U_\infty \ dx=\ln K |\Omega|-\frac M{s}.
\]
\end{lemma}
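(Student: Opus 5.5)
Set $v:=\ln u$, so that by Remark \ref{rem:ln} $v$ solves
\[
-d\Delta v-d|\nabla v|^{2}=-m(x)+s(\ln K-v)\ \text{ in }\Omega,\qquad \partial_\nu v=0 \ \text{ on }\partial\Omega,
\]
and $\widetilde J(m)=\into v$. The plan, in the spirit of \cite{Lou}, is to integrate this equation over $\Omega$. The Neumann condition kills the Laplacian term, and since $\into m=M$ for every $m\in\Mcal_M$ we get
\[
-d\into|\nabla v|^{2}=-M+s\ln K\,|\Omega|-s\into v ,
\qquad\text{that is}\qquad
\into \ln u=\ln K\,|\Omega|-\frac{M}{s}+\frac{d}{s}\into\frac{|\nabla u|^{2}}{u^{2}}.
\]
To make this rigorous for $m\in\Mcal_M$ (which is only in $L^\infty$), I would note that $u$ is bounded below by a positive constant by Proposition \ref{existence}; this also covers $\Mcal_M$ via the first item of Remarks \ref{rmk:ipom}, using $\bar\eps$ built from $\|m\|_\infty$. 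Then I would test the weak formulation of \eqref{P} with $1/u\in H^1(\Omega)$, which yields exactly the identity above, since $\nabla(1/u)=-\nabla u/u^2$.

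From this identity, $\widetilde J(m)\geq \ln K|\Omega|-M/s$ for every $m\in\Mcal_M$. The constant weight $M/|\Omega|$ lies in $\Mcal_M$, and by Proposition \ref{u-costante} (with $s$ constant, so that $m=\alpha s$ exactly when $m$ is constant) its solution is the constant $U_\infty=Ke^{-M/(s|\Omega|)}$. Hence equality holds, with $\into\ln U_\infty=\ln K|\Omega|-M/s$.

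For uniqueness, suppose equality holds for some $m\in\Mcal_M$. Then $\into|\nabla u|^2/u^2=0$, so $u$ is constant, since $\Omega$ is connected and $u\in H^1$. Proposition \ref{u-costante} (whose proof only uses the integrated equation, so it applies to $m\in\Mcal_M$ as well) then forces $m=\alpha s=M/|\Omega|$.

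The only delicate point I expect is justifying the test function $1/u$, i.e.\ the positive lower bound on $u$ for a general $m\in\Mcal_M$, where no uniform $L^\infty$ bound on $m$ is available. This is fine because each fixed $m$ is in $L^\infty$, and the argument is pointwise in $m$.
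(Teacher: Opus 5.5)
Your proposal is correct and follows essentially the same route as the paper: testing \eqref{P} with $1/u$ gives $\widetilde J(m)=\widetilde J(M/|\Omega|)+\frac{d}{s}\into |\nabla u|^{2}/u^{2}$, and minimality and uniqueness follow from the vanishing of the gradient term together with Proposition \ref{u-costante}. Your justification of $1/u\in H^1(\Omega)$ via the lower bound $u\geq Ke^{-\|m\|_\infty/s}$ for each fixed $m\in\Mcal_M$ is a detail the paper leaves implicit. One small correction: when $u\equiv c$, it is the pointwise equation $m=s\ln(K/c)$, not merely its integral, that forces $m$ to be constant.
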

\begin{proof}
Multiplying the equation in Problem  \eqref{P} by $\frac 1u$ and
integrating by parts yields
\[ 
-d \int_\Omega \frac{|\nabla u|^2}{u^2}\ dx=s|\Omega|\ln K-s
\into \ln u dx-M =s\into \ln U_{\infty}-s\into \ln u dx
\]
so that, one has
\[
\widetilde J(m)=\widetilde J\left(\frac{M}{|\Omega|}\right)+\frac{d}{s}\into
\frac{|\nabla u|^2}{u^2}\ dx,
\]
for every $m\in \Mcal_M$. As a consequence, the  only   global minimum point is given 
by $M/|\Omega|$ as the unique solution associated with it, is the constant one.
\end{proof}
As a consequence of the above lemma, we obtain that the solution to the minimization problem in \eqref{eq:minclassebig} is given by the constant weight $m$ when $s$ is constant, as shown in the following result.
\begin{theorem}\label{th:minscost}
Assume $s(x)\equiv s$ is constant in $\Omega$. Then, $M/|\Omega|$
is the unique  optimal control $m_{*}$, for  problem \eqref{eq:minclassebig}.
\\
If in addition 
\begin{equation}\label{ipo:M}
\overline{m}\geq \frac{M}{|\Omega|},
\end{equation}
then the minimum problems \eqref{eq:defmax} and \eqref{eq:minclassebig} are equivalent and solved by the constant weight $M/|\Omega|$.
\end{theorem}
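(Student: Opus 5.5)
The plan is to pass from $\widetilde J$ to $J$ through Jensen's inequality, using the concavity of the logarithm (equivalently, the convexity of the exponential). Let $m\in\Mcal_M$ and let $u$ be the associated solution of \eqref{P}. Jensen's inequality applied to the probability measure $dx/|\Omega|$ gives
\[
\frac{1}{|\Omega|}\into \ln u\,dx\le \ln\Big(\frac{1}{|\Omega|}\into u\,dx\Big),
\qquad\text{that is}\qquad J(m)\ge |\Omega|\exp\Big(\frac{\widetilde J(m)}{|\Omega|}\Big).
\]
Lemma \ref{lemma:minlogu} gives $\widetilde J(m)\ge \widetilde J(M/|\Omega|)=|\Omega|\ln U_\infty$. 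Since the exponential is increasing, we obtain
\[
J(m)\ge |\Omega| U_\infty = J\big(M/|\Omega|\big),
\]
where the last equality holds because, by Proposition \ref{u-costante}, the state associated with the constant weight is $U_\infty$. Hence $M/|\Omega|$ is a minimizer on $\Mcal_M$.

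For uniqueness, suppose $J(m)=J(M/|\Omega|)$. Then both inequalities above must be equalities. Equality in the second one, combined with the identity from the proof of Lemma \ref{lemma:minlogu},
\[
\widetilde J(m)=\widetilde J(M/|\Omega|)+\frac ds\into |\nabla u|^2/u^2,
\]
forces $\nabla u\equiv 0$, so $u$ is constant. (Strict concavity in Jensen would give the same conclusion.) By Proposition \ref{u-costante} with $s$ constant, $m$ must then be proportional to $s$, so $m\equiv M/|\Omega|$. One point needs checking: the well-posedness of \eqref{P} and the identity of Lemma \ref{lemma:minlogu} for general $m\in\Mcal_M$, which need not be bounded by $\overline m$. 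This follows from Proposition \ref{existence} with $m_1=\|m\|_\infty$, since that proof only uses $m\in L^\infty$, $m\ge 0$.

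For the second statement, assumption \eqref{ipo:M} means $0\le M/|\Omega|\le\overline m$, so $M/|\Omega|\in\Mcal$. Since $\Mcal\subset\Mcal_M$, we have
\[
\min_{\Mcal}J\ge\min_{\Mcal_M}J=J(M/|\Omega|),
\]
and this value is attained in $\Mcal$. Therefore the two minimum problems have the same value. Any minimizer over $\Mcal$ is also a minimizer over $\Mcal_M$, so by the uniqueness just proved it equals $M/|\Omega|$. No step is a serious obstacle. The only delicate point is the equality case, which I would handle through the gradient identity rather than through Jensen.
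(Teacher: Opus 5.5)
Your proposal is correct and follows the paper's route: Jensen's inequality for the exponential, combined with Lemma \ref{lemma:minlogu}, and then the inclusion $\Mcal\subset\Mcal_M$ under \eqref{ipo:M}. Your version is slightly cleaner, since you prove $J(m)\ge J(M/|\Omega|)$ directly for every $m\in\Mcal_M$ and read uniqueness off the gradient identity, whereas the paper starts from a minimizer whose existence over the unbounded class $\Mcal_M$ is never shown, and phrases the equality case through a chain containing a misplaced strict inequality.
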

\begin{proof}
Let $m_{*}$ be a solution to the minimum problem in \eqref{eq:defmax}, with corresponding solution $u_{*}$. Then it results 
\begin{equation}\label{eq:1}
K|\Omega| e^{-\frac M{s|\Omega|}}=\int_{\Omega} U_\infty \ dx\geq \int_{\Omega} u_{*} \ dx=\int_{\Omega} e^{w_{*}} \ dx
\end{equation}
where $w_{*}=\ln u_{*}$.
Applying Jensen's inequality to the strict convex function $\varphi(t)=e^t$ and taking into account Lemma \ref{lemma:minlogu}, we obtain
\begin{equation}\label{eq:2}
\into U_{\infty}\geq \into u_{*}=\int_{\Omega} e^{w_{*}} \geq |\Omega|e^{\frac 1{|\Omega|}\int_\Omega w_{*}}>|\Omega|e^{\frac 1{|\Omega|}\int_\Omega w_{\infty}}=\into U_{\infty},
\end{equation}
where $w_\infty=\ln U_\infty$. Then, the above inequalities  has to be equalities. 
This immediately implies that the minimum of $J$ is achieved only by the constant 
weight $M/|\Omega|$ with associated solution $U_{\infty}$.
To conclude the proof, it is sufficient to observe that, if \eqref{ipo:M} holds, 
then $M/|\Omega| $ belongs to $\Mcal$.
\end{proof}
In view of the above result, it is natural to wonder which  should be the optimal weight when $s$ or $\overline{m}$ is not constant. 
In the following result we show that $\alpha s$ may not be a minimizer if $s$ is not constant.
\begin{proposition}\label{pro:minsnocostante}
Let $s$ be a non-constant function and suppose that 
\begin{equation}\label{eq:ipombarrato}
\overline{m}>\frac{M}{\into s}s.
\end{equation}
Then the function $m=\alpha s$ with $\alpha=M/\into s$ does not solve the minimization problem in \eqref{eq:defmax}.
\end{proposition}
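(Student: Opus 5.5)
The plan is to argue by contradiction using the first-order optimality condition for minimizers together with the explicit structure of the state and adjoint state at $m=\alpha s$. Suppose $m=\alpha s$, with $\alpha=M/\into s$, solves the minimization problem in \eqref{eq:defmax}. By Proposition \ref{u-costante} the associated state is the constant $U_\infty=Ke^{-\alpha}$, so that $\ln\frac{K}{U_\infty}=\alpha$. Substituting into \eqref{adjapp}, the terms $-s(x)p\ln\frac K{U_\infty}$ and $m(x)p=\alpha s(x)p$ cancel, and the adjoint state $p$ solves
\[
-d\Delta p+s(x)p=1 \ \text{ in }\Omega,\qquad \partial_\nu p=0 \ \text{ on }\partial\Omega .
\]
The switching function is then $\vfi=U_\infty p$.

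Next I will show that $\vfi$ must be constant, using the minimizer version of Lemma \ref{le:firstinfo}: $\into g\vfi\le 0$ for every admissible perturbation $g$. The point needing care is admissibility in the sense of Lemma \ref{le:gadm}. By \eqref{eq:ipombarrato}, $\overline m>0$ everywhere, so $|\Omega'|=0$. Moreover $m=\alpha s\ge\alpha\underline s>0$, so the sets $Q^0_n$ are empty for $n$ large. The upper constraint is only pointwise strict, so for $\delta>0$ I will work on
\[
E_\delta:=\{x\in\Omega:\ \overline m(x)-\alpha s(x)\ge\delta\}.
\]
These sets increase to $\Omega$ up to a null set as $\delta\to0$. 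Any $g\in L^\infty$ supported in $E_\delta$ with $\into g=0$ is admissible, because $Q^{\overline m}_n\cap E_\delta=\emptyset$ once $1/n<\delta$. Since $-g$ is then admissible as well, we get $\into g\vfi=0$ for all such $g$. Hence $\vfi$ is a.e. constant on $E_\delta$, exactly as in the first part of Proposition \ref{prop:11.2}. Letting $\delta\to0$, and noting that the constants on the nested sets $E_\delta$ must agree, $\vfi\equiv c$ a.e. in $\Omega$, and by continuity everywhere.

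Finally, $p=c/U_\infty$ is a constant, so $\Delta p=0$ and the adjoint equation gives $s(x)\,c/U_\infty=1$ a.e. Thus $s$ is constant, contradicting the hypothesis. I expect the only delicate step to be the admissibility bookkeeping near the set where $\overline m-\alpha s$ is small, which the exhaustion by $E_\delta$ handles. The rest is the direct computation that the Gompertz logarithm cancels the control term at $m=\alpha s$.
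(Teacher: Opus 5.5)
Your proof is correct and follows the paper's argument. The paper also assumes $m=\alpha s$ is a minimizer, observes that the state is $U_\infty$ so the adjoint equation reduces to $-d\Delta p+s(x)p=1$, and uses the first-order condition on $\Omega^*=\Omega$ to force $\vfi=U_\infty p$ to be constant, which makes $s$ constant. Your exhaustion by the sets $E_\delta$ is the explicit version of the sets $\Omega^*_n$ in the proof of Proposition \ref{prop:11.2}; the paper simply cites that proposition in its minimizer form instead of redoing the admissibility check.
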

\begin{proof}
First note that by hypothesis $m\in \Mcal$. Then, we argue by contradiction, 
and suppose that $m$ is a minimum point of $J$. Proposition 
\eqref{u-costante} imply that the solution $u$ associated with $m$ is the   
constant$u=U_{\infty}=Ke^{-M/\into s}$. 
Then the equation for the  adjoint state $p$ (see \eqref{adjapp}) reduces to 
\[
-d\Delta p+s(x)p\left(-\frac{M}{\into s}+\alpha+1\right)=1\,\;\Leftrightarrow\;
-d\Delta p+s(x)p=1.
\]
So that, the switching function $\vfi:=up=U_{\infty}p$ solves 
\begin{equation}\label{eq:vfinuova}
-d\Delta \vfi+s(x)\vfi=U_{\infty}.
\end{equation}
Note that, by hypothesis 
\[
\Omega^{*}=\{0<m<\overline{m}\}=\Omega.
\]
Then, Proposition \ref{prop:11.2} implies  that $\vfi $ is constant in $\Omega$ 
which, in view of \eqref{eq:vfinuova},  yields that $s$ is constant.
\end{proof}

\begin{remark}
The regularity of the domain $\Omega$ in Theorem \ref{thm:mbang}  can be weakened in some cases, provided that the regularity $W^{2,p}$-estimates and the Sobolev embedding theorem still hold.
\end{remark}

\begin{remark}\label{rem:minlogus}
The same proof of Lemma \ref{lemma:minlogu} shows that
\[
\min_{m\in \mathcal M_M}\into s(x) \ln u=\into s(x) \ln U_{\infty}.
\]
Analogously, if one consider the logistic nonlinearity $f(x,t)=m(x)t-s(x)t^{2}$ one obtains, exploiting the same argument, that
\[
\min_{m\in \mathcal M_M}\into s(x)  u=\into s(x)  \widetilde{U}_{\infty}.
\]
where $\widetilde{U}_{\infty}:=\into m(x)/\into s(x)$. In both cases, it is not clear how to recover information on the population size, i.e. the 
functional $J$. 
\\
A comparison between the situation for a constant $s$ and a piece-wise one is 
performed in Section \ref{se:num_results} via numerical simulations. The results 
are shown in  Figures ~\ref{fig:gompertz_1D_min} and 
~\ref{fig:gompertz_1D_min_piecewise_heatmaps_profiles}. In particular, the red 
graphs in Figure ~\ref{fig:gompertz_1D_min} shows the results proved in Theorem 
\ref{th:minscost}, while the blue ones reveal a non-constant value of the minimum 
of the cost functional $J(m_*)$  depending on $d$. This becomes more evident in 
Figure ~\ref{fig:gompertz_1D_min_piecewise_heatmaps_profiles} where (on the 
bottom left side) the graphs of the optimal control $m_*$ are represented for 
different values of the diffusion rate $d$. All of them show a piece-wise behavior 
of the optimal control; moreover,  the discontinuity point of $m_*$ appears at the 
same point at which $s$ is not continuous. At the same time, Proposition 
\ref{pro:minsnocostante} establishes that $m_*$ is not proportional to $s$ as, in 
this example, \eqref{eq:ipombarrato} is verified.
\end{remark}

\begin{remark}\label{rem:min_s_const}
Lemma \ref{lemma:minlogu} and Theorem \ref{th:minscost} show that, when $s$ 
is constant, the minimization problem in $\Mcal$ is equivalent to problem 
\eqref{eq:minclassebig} if $M/|\Omega|\in \Mcal$ that is if $\overline{m}(x)\geq 
M/|\Omega|$. On the other hand, the case in which $\overline{m}$ vanishes in a 
set of positive measure is open even in the logistic model. 
In this framework,  a natural candidate to solve the minimum problem
in \eqref{eq:defmax} might be a function piecewise constant function in $\Omega\setminus \Omega'$.\\
This would yield two interesting facts: in this situation  the optimal control is not constant and the associated state does not correspond to the limit of $u$ as $d$ tends to plus infinity (see Proposition \ref{pro:dinfinito} ahead).
The results obtained in Section \ref{se:num_results} show the interplay between $\overline{m}$ and $M$. In particular, in Figure~\ref{fig:gompertz_1D_i_ii} (top left and right) it is represented the graph of the optimal control $m_*$ for $\overline{m}:=\chi_{(\frac12,\frac34)}$ and $M=0.2$ (top, left), and $M= 0.03$ (top right). In both the situations hypothesis \eqref{ipo:M} is not globally satisfied. Moreover, this seems to allow $m_*$ to be not constant  in $\Omega\setminus \Omega'$ and to saturate the constraint $\overline{m}$ in a set of small measure depending on the value of $M$ compared to $\into \overline{m}$. 
\end{remark}

\section{Asymptotical study  with respect to $d$}\label{se:asym_d}
In this section we  consider $m$ fixed and we study the behavior of the functional $J_{d}$ with respect to $d\in (0,+\infty)$. 
This study will be crucial in the proof of Theorem \ref{teo-concentration}.
\\
In this study  section we will denote by $u_{d}$   the solution of \eqref{P} for a given $d$.
In the following results,  we compute the limit of $u_{d}$ as $d\to 0^{+}$
and as $d\to\infty$.  The first one turns out to be $U_{0}=Ke^{-\frac{m(x)}{s(x)}}$,
the latter is $U_{\infty}$ which we recall  is not a solution of \eqref{P}, unless  $m(x)$ and $s(x)$ are proportional (see Proposition \ref{u-costante}).
\begin{proposition}\label{pro:dzero}
Let $u_{d}$ the unique solution to problem \eqref{P}. 
Then 
\begin{equation}\label{eq:Uzero}
\lim_{d\to0^{+}}u_{d}=U_0:=Ke^{-\frac{m(x)}{s(x)}}, \qquad  \text{in $L^{q}(\Omega)$ for $q\in [1,+\infty)$}.
\end{equation} 
\end{proposition}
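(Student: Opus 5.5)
Since $\bar\eps\le u_d<K$ uniformly in $d$ by \eqref{eq:bound}, and $\Omega$ is bounded, convergence in $L^1(\Omega)$ already gives convergence in every $L^q(\Omega)$ with $q<\infty$, by interpolation with the $L^\infty$ bound. The plan is therefore to prove $u_d\to U_0$ in $L^2(\Omega)$. I will work with $w_d:=\ln u_d$, which by Remark \ref{rem:ln} solves
\[
-d\Delta w_d-d|\nabla w_d|^2=s(x)\big(\ln K-w_d\big)-m(x)=s(x)\big(W_0-w_d\big), \qquad W_0:=\ln U_0=\ln K-\frac{m}{s}.
\]
Moreover $w_d$ is uniformly bounded, since $\ln\bar\eps\le w_d<\ln K$.

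The key step is an energy identity with a smooth approximation of $W_0$. Note that $W_0$ is only in $L^\infty$, with $\ln\bar\eps\le W_0\le\ln K$. For $\delta>0$, I choose $\psi_\delta\in C^1(\overline\Omega)$ with $\|\psi_\delta\|_\infty\le C$ and $\|\psi_\delta-W_0\|_{2}<\delta$. I then test the equation for $w_d$ with $w_d-\psi_\delta$ and obtain
\[
\into s(w_d-W_0)(w_d-\psi_\delta)=-d\into\nabla w_d\cdot\nabla(w_d-\psi_\delta)+d\into|\nabla w_d|^2(w_d-\psi_\delta).
\]
The difficulty is controlling $d\into|\nabla w_d|^2$. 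For this I test the $w$-equation with $1$, which gives
\[
d\into|\nabla w_d|^2=-\into s(W_0-w_d)\le C,
\]
so $d\|\nabla w_d\|_2^2$ is bounded uniformly in $d$. With this bound, the right-hand side above is at most
\[
d\|\nabla w_d\|_2\|\nabla\psi_\delta\|_2+C\,d\|\nabla w_d\|_2^2 .
\]
The first term is $O(\sqrt d)\,C_\delta$. The second term, however, is only $O(1)$ and not small. This is the main obstacle.

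To get around it, I would switch to the equation for $u_d$ itself, where no quadratic gradient term appears. Testing \eqref{P} with $\psi$ gives
\[
\into\big(s\,u_d\ln\tfrac{K}{u_d}-m u_d\big)\psi=d\into\nabla u_d\cdot\nabla\psi .
\]
Testing \eqref{P} with $u_d$ gives $d\|\nabla u_d\|_2^2\le C$, so the right-hand side is $O(\sqrt d)\to0$ for each fixed $\psi\in H^1$. Hence $h_d:=u_d\big(s(W_0-\ln u_d)\big)\rightharpoonup0$ weakly in $L^2$, by density of $H^1$ and the uniform $L^\infty$ bound on $h_d$. To upgrade this to strong convergence, I test with $\psi=\ln u_d-\psi_\delta$. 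This lies in $H^1$ with $\nabla\psi=\nabla u_d/u_d-\nabla\psi_\delta$, so
\[
d\into\nabla u_d\cdot\nabla\psi=d\into\frac{|\nabla u_d|^2}{u_d}-d\into\nabla u_d\cdot\nabla\psi_\delta .
\]
On the left-hand side, I use
\[
\into u_d s(W_0-w_d)(w_d-\psi_\delta)=-\into u_d s(w_d-W_0)^2+\into u_d s(W_0-w_d)(W_0-\psi_\delta).
\]
Since the first gradient term on the right is nonnegative, this yields
\[
\underline s\,\bar\eps\into(w_d-W_0)^2\le C\delta+C_\delta\sqrt d .
\]
Letting $d\to0$ and then $\delta\to0$ gives $w_d\to W_0$ in $L^2$. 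Since $\exp$ is Lipschitz on $[\ln\bar\eps,\ln K]$, it follows that $u_d\to U_0$ in $L^2$, and then in every $L^q(\Omega)$ with $q<\infty$ by the bounds recalled at the start. The steps to check carefully are the sign of the gradient term and the density approximation of $W_0$. Overall, testing the $u$-equation with $\ln u_d-\psi_\delta$ is what removes the obstacle created by the $d|\nabla w|^2$ term.
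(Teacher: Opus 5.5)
Your argument is correct, and it takes a genuinely different route from the paper.

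\textbf{The paper's route.} It first takes $m,s\in C^2$ and checks that $U_0\pm Cd$ are super- and sub-solutions, which gives $\|u_d-U_0\|_\infty\le Cd$. It then reaches general $m\in\Mcal$ and $s$ satisfying \eqref{ipo:s} by approximating the coefficients with smooth $m_n,s_n$ and using a triangle inequality.

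\textbf{Your route.} You never approximate the coefficients. You approximate only the target $W_0=\ln U_0$ and test \eqref{P} with $\ln u_d-\psi_\delta$, which gives
\[
\into s\,u_d(w_d-W_0)^2+d\into\frac{|\nabla u_d|^2}{u_d}=d\into\nabla u_d\cdot\nabla\psi_\delta+\into s\,u_d(W_0-w_d)(W_0-\psi_\delta).
\]
The gradient term on the left is nonnegative, and $d\|\nabla u_d\|_2^2\le C$. Together these close the argument for merely bounded coefficients.

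\textbf{What each buys.} For smooth data, the paper's route gives a uniform $O(d)$ rate. In your route every constant is independent of $d$ by construction. Two consequences follow.
\begin{itemize}
\item You need no stability of $u_d$ with respect to $(m,s)$ that is uniform in small $d$. The paper's Step 2 passes over this point: its convergence $u_n\to u_d$ rests on $d\|\nabla u_n\|_2^2\le C$, which degenerates as $d\to0^+$.
\item You need no boundary check. For the Neumann problem, $U_0+Cd$ and $U_0-Cd$ are super- and sub-solutions only if $\partial_\nu U_0\ge 0$ and $\partial_\nu U_0\le 0$ respectively on $\partial\Omega$, that is, $\partial_\nu(m/s)=0$ there. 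The paper would have to arrange this in its approximation.
\end{itemize}

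Your identity also gives a rate when $m/s\in H^1(\Omega)$. Take $\psi_\delta=W_0$ and absorb $d\into\nabla u_d\cdot\nabla W_0$ into $d\into|\nabla u_d|^2/u_d$ by Young's inequality; this yields $\|\ln u_d-W_0\|_2\le C\sqrt d$.

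Finally, three pieces play no role and can be dropped: the preliminary detour through the $w$-equation, the weak limit $h_d\rightharpoonup 0$, and the bound $\|\psi_\delta\|_\infty\le C$.
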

\begin{proof}
We follow the argument of \cite{Lou, deanizha}.
\\
{\bf Step 1.} Let us  first assume that 
$m, \,s \in  C^{2}(\Omega)$ and  show that there exists 
$C>0$ and $d_{0}>0$ such that,
\begin{equation}\label{eq:msreg}
\|u_{d}-U_{0}\|_{\infty}\leq Cd, \qquad \text{for every $d\in (0,d_{0})$.}
\end{equation} 
Let us consider $\overline{u}:= U_0+C^*d$ for $C^*\in \R^+$ to be detected.
Recalling  that $s(x)\ln\frac{K}{U_0}-m(x)=0$, one obtains
\begin{align*}
-d\Delta \overline{u}-\overline{u}\left(s(x)\ln\frac{K}{\overline{u}}-m(x)\right)&
=-d\Delta U_0-\overline{u}\left(s(x)\ln\frac{K}{\overline{u}}-m(x)\right)
\\
&=-d\Delta U_0-\overline{u}s(x) \ln \frac{U_0}{U_0+C^*d}
\\
&=-d\Delta U_0-\overline{u}s(x)\ln\left(1-\frac{C^*d}{U_0+C^*d}\right).
\end{align*}
Since $U_0>0$ and does not depend on $d$, there exists $d_1$ sufficiently small such that
\[
\dfrac{\ln\left(1-\frac{C^*d}{U_0+C^*d}\right)}{-\frac{C^*d}{U_0+C^*d}}\in (\frac34,\frac54), \text{ for every $d\in (0,d_1)$.}
\]
Then
\[
-d\Delta \overline{u}-\overline{u}\left(s(x)\ln\frac{K}{\overline{u}}-m(x)\right)\geq -dU_0 H(x)+\frac{\underline{s}}2 C^* d  
\]
where $H(x)$ is a bounded function in $\Omega$, thanks to the regularity assumption on $m$ and $s$.
So that, taking into account that $\inf U_0>0$ we can find $C^*$ such that $\overline{u}$ is a super-solution to Problem \ref{P}.
Analogous computations shows that there exists a constant $C_*$ such that $\underline{u}:=U_0-C_*d$ is a sub-solution. Since $\underline{u}<\overline{u}$, Proposition \ref{existence} implies that $\underline{u}\leq u_d\leq\overline{u}$. So that $u_d\to U_0$ in $L^\infty(\Omega)$.
\\
{\bf Step 2.}
Let us now consider  $s$ satisfying assumption \eqref{ipo:s} and $m\in \Mcal$,
and denote with $s$ and $m$ their trivial extension to $\R^{N}$.
Then, there exist 
 $m_{n},\, s_{n}\in C^{2}(\overline{\Omega})$ such that
\begin{equation}\label{eq:msmoll}
\begin{split}
\|m_{n}\|_{L^{\infty}(\Omega)}\leq \|m\|_{L^{\infty}(\Omega)},\; & \into m_{n}>0,
\; \underline{s}\leq s_{n}\leq \overline{s},
\\
m_{n}\to m, \;&  s_{n}\to s, \; \text{in $L^{1}(\Omega)$},
\end{split}
\end{equation}
with $\underline{s}$, $\overline{s}$ given in \eqref{ipo:s}.
Then, applying Proposition \eqref{existence}, for every $d>0$ there exists $u_{n}$ solution to  Problem \eqref{P} (with $f(x,u)=f_{n}(x,u)=s_{n}\ln\frac{K}{u}-m_{n}(x)u$ for $u>0$); moreover $\overline{\eps}_{n}\leq u_{n}\leq K$
 where $\overline{\eps}_{n}=Ke^{-2\frac{\|m\|_{L^{\infty}(\Omega)}}{\underline{s}}}$.
As a consequence, $u_{n}$ satisfies
\[\begin{split}
d\|\nabla u_{n}\|_{L^{2}(\Omega)}^{2} &=
\into u^{2}_{n}\left(s_{n}
\ln\frac{K}{u_{n}}-m_{n}\right)\leq K^{2}\left(|\Omega|\|\overline{m}\|_{L^{\infty}(\Omega)}+\into \overline{s}\ln\frac{K}{u_{n}}\right)
\\
&\leq
3K^{2}|\Omega|\|\overline{m}\|_{L^{\infty}(\Omega)}.
\end{split}\]
So that, there exists $u\in H^{1}(\Omega)\cap L^{\infty}(\Omega)$ such that $u_{n}\to u$ weakly in $H^{1}(\Omega)$ and strongly in $L^{q}(\Omega)$ for every $q\in [1,+\infty)$. This and \eqref{eq:msmoll} imply that $f_{n}(x,u_{n})\to f(x,u)$ in
$L^{q}(\Omega) $ for every $q\in [1,+\infty)$.
Passing to the limit in the equation satisfied by $u_{n}$, we obtain, by uniqueness, that $u$ is the solution to Problem \eqref{P}. 
Then,  for every $\sigma >0$, we can fix $n_{1}$ such that
$\|u_{d}-u_{n_{1}}\|_{L^{q}(\Omega)}+\|U_{n_{1}}-U_{0}\|_{L^{q}(\Omega)}\leq \sigma$. Moreover, we can exploit \eqref{eq:msreg} proved in Step 1 to obtain that 
$\|u_{n_{1}}-U_{n_{1}}\|_{L^{\infty}(\Omega)}\leq \sigma$, for $d\in (0,d_{\sigma})$, where $U_{n}=Ke^{-\frac{m_{n}(x)}{s_{n}(x)}}$. All in all, we deduce 
\[\begin{split}
\|u_{d}-U_{0}\|_{L^{q}(\Omega)}
&\leq \|u_{d}-u_{n_{1}}\|_{L^{q}(\Omega)}+
\|u_{n_{1}}-U_{n_{1}}\|_{L^{q}(\Omega)}+\|U_{n_{1}}-U_{0}\|_{L^{q}(\Omega)}
\\
&\leq 2\sigma+\|u_{n_{1}}-U_{n_{1}}\|_{L^{q}(\Omega)}\leq 3\sigma \quad \text{for $d\in (0,d_{\sigma})$},
\end{split}\]
yielding the conclusion.
\end{proof}
\begin{proposition}\label{pro:dinfinito}
Let $u_{d}$ be the unique solution to problem \eqref{P}. 
Then 
\begin{equation}\label{eq:ulimUinfty}
\lim_{d\to+\infty}u_{d}=U_\infty, \qquad  \text{in $H^{1}(\Omega)$}
\end{equation} 
where $U_{\infty}$ is defined in \eqref{eq:Uinfty}.
\end{proposition}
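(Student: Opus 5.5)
The natural approach is a compactness argument with uniform bounds in $d$, followed by identification of the limit through the integral identity obtained by testing with $1$. By Proposition \ref{existence}, $\bar\eps\le u_d<K$ uniformly in $d$. Testing \eqref{P} with $u_d$ gives
\[
d\into|\nabla u_d|^2=\into u_d^2\Big(s(x)\ln\frac{K}{u_d}-m(x)\Big)\le C,
\]
with $C$ independent of $d$, because $\ln(K/u_d)\le m_1/\underline{s}$. Hence $\|\nabla u_d\|_2^2\le C/d\to0$ and $\{u_d\}$ is bounded in $H^1(\Omega)$. Along any sequence $d_n\to\infty$ we can extract a subsequence with $u_{d_n}\rightharpoonup u_\infty$ weakly in $H^1$, strongly in $L^2$ and a.e. Since $\nabla u_{d_n}\to0$ in $L^2$, the limit $u_\infty$ is constant on the connected domain $\Omega$, and $\bar\eps\le u_\infty\le K$. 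Because $\|\nabla(u_{d_n}-u_\infty)\|_2=\|\nabla u_{d_n}\|_2\to0$, the convergence is in fact strong in $H^1$.

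To identify the constant, integrate the equation, which corresponds to testing with $1$ under the Neumann condition:
\[
\into s(x)\,u_d\ln\frac{K}{u_d}=\into m(x)u_d .
\]
The integrands are bounded uniformly, since $t\mapsto t\ln(K/t)$ is continuous on $[\bar\eps,K]$, and they converge a.e. Dominated convergence then gives $u_\infty\ln(K/u_\infty)\into s=u_\infty M$. Since $u_\infty>0$, this yields $\ln(K/u_\infty)=M/\into s$, that is, $u_\infty=U_\infty$.

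Because the limit is the same for every subsequence, the whole family converges: $u_d\to U_\infty$ in $H^1(\Omega)$ as $d\to+\infty$.

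There is no serious obstacle here. The only points needing care are that the lower bound $\bar\eps$ is independent of $d$, which keeps the logarithm bounded and permits dominated convergence, and that strong $H^1$ convergence follows directly from $\|\nabla u_d\|_2\to0$ together with the $L^2$ convergence.
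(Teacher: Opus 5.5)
Your proof is correct and follows the paper's argument: the $d$-independent bound $\bar\eps\le u_d<K$ gives $\|\nabla u_d\|_2^2\le C/d$, so $u_d$ converges strongly in $H^1(\Omega)$ to a constant, and that constant is identified by integrating the equation (testing with $1$) and passing to the limit. Your version is slightly more careful than the paper's: it makes explicit the subsequence-plus-uniqueness argument, the passage to the limit via dominated convergence, and the division by $u_\infty\ge\bar\eps>0$.
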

\begin{proof}
Let us first observe that Proposition \ref{existence} yields $\overline{\varepsilon}\leq u_{d}(x)\leq K$, so that, assumptions \eqref{ipo:s} and \eqref{M-set} yield
\[
\|\nabla u_{d}\|_{2}^{2}=\dfrac1d \int_{\Omega}u^{2}_{d}\Big(s(x)\ln\frac{K}{u_{d}}-m(x)\Big)\leq \frac{C}d.
\]
As a consequence, there exists a positive constant $C$ such that $u_{d}\to C$  
strongly in $H^{1}(\Omega)$ as $d\to +\infty$.
Moreover, integrating the equation satisfied by $u_{d}$ and exploiting the homogeneous Neumann boundary conditions, one obtains
\[
0=\int_{\Omega}u_{d}\Big(s(x)\ln\frac{K}{u_{d}}-m(x)\Big).
\]
Then, taking the limit as $d\to \infty$ we infer
\[
0=\int_{\Omega}\Big(s(x)\ln\frac{K}{C}-m(x)\Big)=\ln\frac{K}{C}\into s(x)-M.
\]
which gives the exact value of $C$, yielding the conclusion. 
\end{proof}
Let us deepen the asymptotical study with respect to  $d$ large by studying a first order expansion of $u_{d}$. We will perform this expansion, first for every $m$ fixed for $u_{d}$, the adjoint state $p$ and the switching  function $\vfi=up$. Then for the corresponding quantities associated with $m^{*}_{d}$. 
As already observed, these expansion will be crucial in proving Theorem \ref{teo-concentration}.

\begin{lemma}\label{le:expd}
For every $m$ fixed, let $u_{d}$ be the solution to problem \eqref{P}.
Then, up to a subsequence, 
\[
\lim_{d\to+\infty}d(u_{d}-U_{\infty})=\eta, \quad \text{in $H^{1}(\Omega)$ }
\]
where $U_{\infty}=Ke^{-\frac{M}{\int_{\Omega}s(x)}}$ 
is defined in \eqref{eq:Uinfty}
and $\eta$ is the unique solution to the problem
\begin{equation}\label{eq:eta}
\begin{cases}
-\Delta \eta=U_{\infty}\left(s(x)\frac{M}{\into s}-m(x)\right)
\\
\frac{\partial \eta}{\partial \nu}=0
\end{cases}
\end{equation}
that satisfies
\beq\label{2}
\left(\frac{M}{\into s(x)}-1\right)\int_\Omega  s(x)\eta =\int_\Omega m(x) \eta.
\eeq
\end{lemma}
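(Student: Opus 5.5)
Set $\eta_d:=d(u_d-U_\infty)$. Subtracting $-d\Delta U_\infty=0$ from \eqref{P}, $\eta_d$ solves
\[
-\Delta \eta_d = u_d\Big(s(x)\ln\frac{K}{u_d}-m(x)\Big)=:h_d,\qquad \partial_\nu\eta_d=0 .
\]
By Proposition \ref{existence}, $h_d$ is bounded in $L^\infty(\Omega)$ uniformly in $d$. By Proposition \ref{pro:dinfinito}, $u_d\to U_\infty$ in $H^1$, hence a.e. up to a subsequence. Since $\bar\e\le u_d\le K$, dominated convergence gives
\[
h_d\to U_\infty\Big(s\ln\frac{K}{U_\infty}-m\Big)=U_\infty\Big(s\frac{M}{\into s}-m\Big)
\]
in every $L^q$, $q<\infty$. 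This is exactly the right-hand side of \eqref{eq:eta}, and it has zero mean, consistent with the Neumann problem. Testing the $\eta_d$ equation with $\eta_d-\bar\eta_d$, where $\bar\eta_d$ is the average, and using Poincar\'e--Wirtinger gives $\|\nabla\eta_d\|_2\le C$, and the oscillating part $\eta_d-\bar\eta_d$ is bounded in $H^1$. Passing to the limit in the weak formulation, the oscillating part converges, strongly in $H^1$, to the unique zero-mean solution of \eqref{eq:eta}. Strong $H^1$ convergence holds because gradients satisfy $\int|\nabla(\eta_d-\eta)|^2=\int (h_d-h)(\eta_d-\eta)\to0$.

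The real issue is the mean $\bar\eta_d$. Solutions of \eqref{eq:eta} are determined only up to constants, and the additional condition \eqref{2} is what fixes the constant; it must be extracted from the compatibility condition at the next order. Integrating \eqref{P} over $\Omega$ gives $0=\into u_d(s\ln\frac K{u_d}-m)$. Write $u_d=U_\infty+\eta_d/d$ and expand $g(t)=t\ln\frac Kt$ around $U_\infty$:
\[
g(u_d)=g(U_\infty)+g'(U_\infty)\frac{\eta_d}{d}+O\big(|\eta_d|^2/d^2\big),\qquad g'(U_\infty)=\ln\frac K{U_\infty}-1=\frac{M}{\into s}-1 .
\]
The zero-order term vanishes by the definition of $U_\infty$. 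Multiplying by $d$ gives
\[
0=\Big(\frac{M}{\into s}-1\Big)\into s\,\eta_d-\into m\,\eta_d+O\Big(\frac1d\into\eta_d^2\Big).
\]
First I would show that $\bar\eta_d$ is bounded. Suppose $|\bar\eta_d|\to\infty$ along a subsequence. Divide the identity by $\bar\eta_d$. Since $\eta_d-\bar\eta_d$ is bounded, the linear part becomes $\big(\frac{M}{\into s}-1\big)\into s-\into m+o(1)=-\into s+o(1)\neq0$. This uses $\into m=M$, and it is exactly the invertibility in Proposition \ref{prop-lin-inv} reappearing.

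The remainder needs care because $\eta_d^2/d$ could be large if $\bar\eta_d$ is large. I would handle it with the exact Lagrange remainder together with $|\eta_d|/d=|u_d-U_\infty|\le K$: the remainder is bounded by $C|\eta_d|\,|u_d-U_\infty|$, and $u_d-U_\infty\to0$ in $L^2$. So the remainder is $o(\|\eta_d\|_1)=o(|\bar\eta_d|+1)$, which gives a contradiction. Hence $\bar\eta_d$ is bounded, so up to a subsequence $\bar\eta_d\to c$ and $\eta_d\to\eta$ in $H^1$. Passing to the limit in the identity, with the remainder now $o(1)$ by the same estimate, yields \eqref{2}.

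Uniqueness of $\eta$ under \eqref{eq:eta} and \eqref{2}: if two solutions differ by a constant $k$, then \eqref{2} forces $k\big[(\frac M{\into s}-1)\into s-M\big]=-k\into s=0$, so $k=0$. It follows that the whole family converges and not only a subsequence. The main obstacle I expect is the control of the mean $\bar\eta_d$ via this second-order compatibility expansion, in particular bounding the nonlinear remainder uniformly without a priori bounds on $\bar\eta_d$.
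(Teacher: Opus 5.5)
Your argument is correct and follows essentially the paper's route. The paper also gets the a priori bound by contradiction from the integrated equation, where the constant mode carries the nonzero coefficient $-\into s$. It then passes to the limit in the same first-order expansion of $t\ln\frac Kt$ at $U_\infty$ to obtain \eqref{2}, and uses \eqref{2} to fix the additive constant.

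The only difference is organizational. The paper normalizes $z_d=\eta_d/\|\eta_d\|_2$ and shows the blow-up limit would be a nonzero constant, which the integrated identity rules out. You instead split off the mean, control the oscillating part directly by Poincar\'e--Wirtinger, and bound the nonlinear remainder explicitly. This is a slightly more transparent version of the same argument, and it also gives convergence of the whole family rather than only a subsequence.
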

\begin{proof}
Let $\eta_{d}=d(u_{d}-U_{\infty})$, then $\eta_{d}$ is such that $0<U_{\infty}+\frac1d\eta_d<K$ and it  solves
\begin{equation}\label{eq:etad}
\begin{split}-\Delta \eta_{d}&
=-d\Delta u_{d}=s(x)u_{d}\ln\frac{K}{u_{d}}-m(x)u_{d}
\\
&= s(x)(U_{\infty}+\frac1d \eta_{d})\ln\frac{K}{U_{\infty}+\frac1d \eta_{d}}-m(x)\left(U_{\infty}+\frac1d \eta_{d}\right).
\end{split}
\end{equation}
Then $\eta_{d}$ is bounded in $H^{1}(\Omega)$, once we show that $\eta_{d}$ is bounded in $L^{2}(\Omega)$.

In order to do this, let us argue by contradiction
and suppose that, up to a subsequence, $\|\eta_{d}\|_{2}\to +\infty$, then the function
\[
z_{d}:=\dfrac{\eta_{d}}{\|\eta_{d}\|_{2}}
\]
is such that $\|z_{d}\|_{2}=1$ and $z_{d}$ satisfies 
\beq\label{eq:tzetad}
-\Delta z_{d}=\frac{u_d}{\|\eta_d\|_{2}}\left(s(x)\ln\frac{K}{u_{d}}-m(x)\right).
\eeq
Taking $z_d$ as test function in \eqref{eq:tzetad} one obtains
\[\int_\Omega |\nabla  z_d|^2=\frac{1}{\|\eta_d\|_{2}}\int_\Omega\left(s(x)\ln\frac{K}{u_{d}}-m(x)\right)u_d z_d\leq \frac{C}{\|\eta_d\|_{2}}.
\]
As a consequence, there exists $z\in H^1(\Omega)$ such that $\|z\|_{L^2 }=1$, $z_d$ converges to $z$ strongly in  $H^1(\Omega)$ and $z$ is constant. So that
 $ z=1/\sqrt{|\Omega|}$. 
On the other hand, integrating  \eqref{eq:tzetad} one deduces that
\[\begin{split}
0=&\int_\Omega\left(s(x)\ln\frac{K}{u_{d}}-m(x)\right)u_d=U_\infty\into \left(s(x)\ln\frac{K}{u_{d}\pm U_\infty}-m(x)\right)
\\
&+
\into (u_d-U_\infty)\left(s(x)\ln\frac{K}{u_{d}}-m(x)\right)
\\
=&-U_\infty \into s(x)\ln\left(1+\frac{u_{d}- U_\infty}{U_\infty}\right)+
\into (u_d-U_\infty)\left(s(x)\ln\frac{K}{u_{d}}-m(x)\right).
\end{split}\]
Note that
\[
\frac{u_d-U_\infty}{U_\infty}=\frac{d\eta_d}{\|d\eta_d\|_2}\frac{\|u_d-U_\infty\|_2}{U_\infty}
=z_d\frac{\|u_d-U_\infty\|_2}{U_\infty}
\]
so that, we have
\[
\begin{split}
0=-\frac{U_\infty}{\|u_d-U_\infty\|_2} \into s(x)\ln\left(1+z_d\frac{\|u_{d}- U_\infty\|_2}{U_\infty}\right)+\into z_d \left(s(x)\ln\frac{K}{u_{d}}-m(x)\right).
\end{split}
\]
Passing to the limit, and taking into account that $U_\infty=Ke^{-\frac{M}{\into s}}$, we obtain
\[
0=-\frac1{\sqrt{|\Omega|}}\into s(x)dx
\]
which  contradicts \eqref{ipo:s}.

So that there exists $\eta \in H^{1}(\Omega)$ such that $\eta_{d}\rightharpoonup \eta$ weakly in $H^{1}(\Omega)$ as $d\to +\infty $. Then, Proposition \ref{u-costante} and  Sobolev embeddings imply that $\eta$ solves \eqref{eq:eta}.
The strong convergence in $H^{1}$ immediately follows once one take as test function $\eta_{d}-\eta$ in \eqref{eq:etad}.

In order to obtain \eqref{2}, we integrate 
\eqref{P} and use  the expansion 
$u_d=U_\infty+\frac 1d \eta_{d}$,  to have
\[
0=\into\left(U_{\infty}+\frac1d\eta_{d}\right)\left(s(x)\ln\frac{K}{U_{\infty}+\frac1d\eta_{d}}-m(x)\right).
\]
Taking into account \eqref{eq:Uinfty} one obtains
\[\begin{split}
0&=-d\into U_{\infty}s(x) \ln\left(1+\frac1{dU_{\infty}}\eta_{d}\right)-\into \eta_{d}s(x)\ln\left(1+\frac1{dU_{\infty}}\eta_{d}\right)
\\
&+\into\eta_{d}\left(s(x)\ln\frac{K}{U_{\infty}}-m(x)\right).
\end{split}\]
As $\eta_{d}\to \eta $ strongly in $H^{1}(\Omega)$, 
we can pass to the limit and obtain that  $\eta$ satisfies \eqref{2}. Further, we observe that equation \eqref{eq:eta} has infinitely many solutions that differs for a constant,  namely $\eta_1=\beta+\eta_2$ where $\beta\in \R$ for any  $\eta_1,\eta_2$  solutions to \eqref{eq:eta}. 

Moreover, there exists only one solution that satisfies \eqref{2}. Indeed,
let  $\eta_1$ satisfying \eqref{2}, then it holds
\[\left(\frac{M}{\into s(x)}-1\right)\int_\Omega  s(x)\eta_2 =\int_\Omega m(x) \eta_2 +\beta \into s(x)\neq \int_\Omega m(x) \eta_2
\]
if $\beta\neq 0$, 
concluding the proof.
\end{proof}

 \begin{corollary}
Suppose that $m\neq \alpha s$ for $\alpha=M/\into s$, then it results   
\[
J_d(m)>J_\infty(m),\qquad \text{ for $d$ sufficiently large.}
\]
\end{corollary}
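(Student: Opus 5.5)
The plan is to read off the sign of $J_d(m)-J_\infty(m)$ from the first-order expansion of Lemma \ref{le:expd}. Here $J_\infty(m)=\lim_{d\to\infty}J_d(m)=|\Omega|U_\infty$ by Proposition \ref{pro:dinfinito}. Writing $u_d=U_\infty+\frac1d\eta_d$, we have
\[
J_d(m)-J_\infty(m)=\frac1d\into \eta_d ,
\]
and $\eta_d\to\eta$ strongly in $H^1(\Omega)$, so $\into\eta_d\to\into\eta$. The argument in Lemma \ref{le:expd} should also give convergence along the whole family, since $\eta$ is uniquely determined by \eqref{eq:eta}--\eqref{2}. It therefore suffices to show $\into\eta>0$: then $d\,(J_d(m)-J_\infty(m))\ge\frac12\into\eta>0$ for $d$ large.

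To get positivity I combine the two pieces of information on $\eta$. Testing \eqref{eq:eta} with $\eta$ and writing $\alpha=M/\into s$ gives
\[
\into|\nabla\eta|^2=U_\infty\Big(\alpha\into s\eta-\into m\eta\Big).
\]
Substituting $\into m\eta=(\alpha-1)\into s\eta$ from \eqref{2} yields the key identity
\[
U_\infty\into s(x)\eta=\into|\nabla\eta|^2 .
\]
The same identity follows independently from the $\ln u$ argument of Lemma \ref{lemma:minlogu}. One divides \eqref{P} by $u_d$ and integrates, which gives $\into s\ln u_d=\into s\ln U_\infty+d\into|\nabla u_d|^2/u_d^2$. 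Since $d\,\nabla u_d=\nabla\eta_d$ and $\ln u_d=\ln U_\infty+\frac{\eta_d}{dU_\infty}+O(d^{-2})$, expanding both sides to order $1/d$ recovers the identity; this serves as a consistency check. Moreover, since $m\neq\alpha s$, the right-hand side of \eqref{eq:eta} is not identically zero, so $\eta$ is not constant and $\into|\nabla\eta|^2>0$. Hence $\into s\eta>0$.

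When $s$ is constant this is exactly $\into\eta>0$, and the corollary follows. The main obstacle is the general heterogeneous case, where one must pass from the $s$-weighted quantity $\into s\eta>0$ to the unweighted $\into\eta$. I first try to find a second relation among $\into\eta$, $\into s\eta$ and $\into m\eta$, for instance by pushing the integrated equation to the next order in $1/d$, which would involve the second-order correction. If that fails, I would restrict the statement to constant $s$, or else note that the argument gives the weighted inequality $\into s\,u_d>\into s\,U_\infty$ for $d$ large. I am not confident the unweighted inequality holds for every non-constant $s$ without further assumptions.
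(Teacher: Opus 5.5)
Up to the identity $U_\infty\into s\eta=\into|\nabla\eta|^2>0$ your argument is the same as the paper's, and for constant $s$ it is a complete proof. The obstacle you isolate is genuine, and it is exactly where the paper's proof fails. The paper closes with $\into\eta=\into\frac{s\eta}{s}>\frac1{\overline s}\into s\eta\ge 0$. But multiplying $\frac1s\ge\frac1{\overline s}$ by $s\eta$ preserves the inequality only where $\eta\ge 0$, and nothing guarantees $\eta\ge 0$ a.e.

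In fact no further relation, and no higher-order expansion, can close the gap, because the unweighted inequality is false for non-constant $s$. Let $\hat s:=\frac1{|\Omega|}\into s$, fix $\e\in(0,\min\{1,\alpha\})$ and set $m:=(\alpha-\e)s+\e\hat s$. Then $m\ge 0$, $\into m=M$ and $m\neq\alpha s$; moreover $m\in\Mcal$ whenever, e.g., $\overline m\ge(\alpha+1)\overline s$. Since $\alpha s-m=\e(s-\hat s)$, the solution of \eqref{eq:eta}--\eqref{2} is $\eta=\e U_\infty\psi+\beta$, where $-\Delta\psi=s-\hat s$, $\partial_\nu\psi=0$, $\into\psi=0$, and $\beta\in\R$. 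Inserting $\into m\eta=(\alpha-\e)\into s\eta+\e\beta\into s$ into \eqref{2} gives $(\e-1)\into s\eta=\e\beta\into s$. Using $\into s\eta=\e U_\infty\into s\psi+\beta\into s$, this yields
\[
\beta\into s=-(1-\e)\,\e\,U_\infty\into s\psi=-(1-\e)\,\e\,U_\infty\into|\nabla\psi|^2<0 .
\]
Hence $\into\eta=|\Omega|\beta<0$, while $\into s\eta=\e^2U_\infty\into|\nabla\psi|^2>0$, consistent with your identity. So $\eta$ changes sign, and $J_d(m)<|\Omega|U_\infty=J_\infty(m)$ for all large $d$. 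Since the first-order coefficient already has the wrong sign, going to order $d^{-2}$ cannot help.

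Your fallback is therefore the correct conclusion. The corollary holds as stated when $s$ is constant, and in fact for every $d>0$, by the Jensen argument in the proof of Theorem \ref{th:minscost}. For general $s$ the right statement is the weighted one, and it also holds for every $d>0$, not only for $d$ large. Your identity $\into s\ln u_d=\into s\ln U_\infty+d\into|\nabla u_d|^2/u_d^2$, combined with Jensen's inequality for the probability measure $s\,dx/\into s$, gives $\into s\,u_d\ge U_\infty\into s$. The inequality is strict when $m\neq\alpha s$, since then $u_d$ is not constant.
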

 \begin{proof}
By applying Lemma \ref{le:expd} we can expand the functional $J_{d}$ when $d\to \infty$ as
\[J_d(m)=\into u_d=\into U_\infty +\frac 1 d\into \eta_d=U_\infty |\Omega|+\frac 1 d\into \eta +O(\frac 1{d^2})
\]
where $\eta_{d}=d(u_{d}-U_{\infty})$ and  $\eta$ satisifies \eqref{eq:eta} and \eqref{2}.  Multiplying equation \eqref{eq:eta} by $\eta$  integrating and using \eqref{2} one gets
\[\into |\nabla \eta|^2=U_\infty\left(\frac M{\into s}
\into s \eta-\into m\eta\right)=U_\infty\into s \eta>0,
\]
as $m\neq \alpha s$. As a consequence, we obtain
\[\into \eta =\into \frac {s\eta }{s}>\frac 1{\overline s}\into s \eta\geq 0,\]
yielding the conclusion.
\end{proof}
\begin{remark}
 We expect that  combining the argument of the proof of Lemma \ref{le:expd} and of \cite[Theorem 1]{manaprjmpa} it is possibile to obtain that $d\to J_{d}(m)$ is convex for $d$ sufficiently large, which in turn would yield  
that $d\to J_{d}(m)$ is strictly decreasing. 
This property is related with the exclusion
of the occurrence of  fragmentation phenomena, at least in dimension one.
We will obtain this result in a different way in Theorem \ref{teo-concentration}
(see Remark \ref{rmk:dsmall} also related to the numerical results). 
\end{remark}
\begin{lemma}\label{le:expdstar}
For every $m$ fixed, let $p_d$ be the unique solution to \eqref{adjapp}.
Then
\beq
\lim_{d\to+\infty}d(p_{d}-p_{\infty})=w
\ \ \ \hbox{ in }H^1(\Omega),
\eeq
where $p_\infty=\frac {|\Omega|}{\int_\Omega s(x)} $ and $w$ is the unique solution to the problem
\begin{equation}\label{eq:w}
\begin{cases}
-\Delta w=p_{\infty}\left(s(x)\frac{M}{\into s}-m(x)-s(x)\right)+1
\\
\frac{\partial w}{\partial \nu}=0
\end{cases}
\end{equation}
that satisfies
\beq\label{2-bis}
\left(\frac{M}{\into s(x)}-1\right)\int_\Omega  s(x)w -\int_\Omega m(x) w=\frac {p_\infty}{U_\infty}\into s(x) \eta
\eeq
where $\eta$ is defined in Lemma \ref{le:expd}.

\end{lemma}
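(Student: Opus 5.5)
The plan is to follow the scheme of Lemma \ref{le:expd}. We first identify the zeroth order limit $p_\infty$, then show that the rescaled difference $w_d:=d(p_d-p_\infty)$ is bounded, and finally pass to the limit both in the equation and in its integrated form.

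\emph{Zeroth order.} Lemma \ref{lem:p-pos} gives $\|p_d\|_\infty\le C$. Testing \eqref{adjapp} with $p_d$ and using \eqref{eq:bound} yields $d\|\nabla p_d\|_2^2\le C$. Hence, along subsequences, $p_d\to c$ in $H^1(\Omega)$ with $c$ a constant. Integrating \eqref{adjapp} over $\Omega$ gives
\[
\int_\Omega p_d\Big(s(x)-s(x)\ln\tfrac{K}{u_d}+m(x)\Big)=|\Omega| .
\]
By Proposition \ref{pro:dinfinito}, $\ln\frac{K}{u_d}\to \ln\frac{K}{U_\infty}=\frac{M}{\int_\Omega s}$. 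In the limit this gives $c\,(\int_\Omega s-M+M)=|\Omega|$, so $c=p_\infty=|\Omega|/\int_\Omega s$, and the whole family converges.

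\emph{Equation for $w_d$.} Write $u_d=U_\infty+\frac1d\eta_d$ as in Lemma \ref{le:expd}, so that $\ln\frac K{u_d}=\frac{M}{\int_\Omega s}-\ln\big(1+\frac{\eta_d}{dU_\infty}\big)$. Then $w_d$ solves, with Neumann boundary conditions,
\[
-\Delta w_d=1-p_\infty\Big(s-\tfrac{M}{\int_\Omega s}s+m\Big)-p_\infty s\ln\Big(1+\tfrac{\eta_d}{dU_\infty}\Big)-\tfrac{w_d}{d}\Big(s-\tfrac{M}{\int_\Omega s}s+m+s\ln\big(1+\tfrac{\eta_d}{dU_\infty}\big)\Big).
\]
Integrating over $\Omega$, the constant contributions cancel exactly because of the value of $p_\infty$. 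Multiplying the remaining identity by $d$ gives
\begin{equation*}
0=-p_\infty\int_\Omega s\, d\ln\Big(1+\tfrac{\eta_d}{dU_\infty}\Big)-\int_\Omega w_d\Big(s-\tfrac{M}{\int_\Omega s}s+m+s\ln\big(1+\tfrac{\eta_d}{dU_\infty}\big)\Big).
\end{equation*}

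\emph{Main step: $L^2$ bound on $w_d$.} I expect this to be the main obstacle, and I would argue by contradiction as in Lemma \ref{le:expd}. Suppose $\|w_d\|_2\to\infty$ along a subsequence and set $z_d=w_d/\|w_d\|_2$. Since the right-hand side of the $w_d$-equation is $O(1)+O(\|w_d\|_2/d)$ in $L^2$ (using the $H^1$ bound on $\eta_d$ from Lemma \ref{le:expd}), testing with $z_d$ gives $\|\nabla z_d\|_2\to0$. Thus $z_d\to 1/\sqrt{|\Omega|}$ in $H^1(\Omega)$.

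Now divide the integrated identity by $\|w_d\|_2$. Since $d\ln(1+\frac{\eta_d}{dU_\infty})\to \eta/U_\infty$ in $L^2$, the first term tends to $0$. The second term tends to $-\frac{1}{\sqrt{|\Omega|}}\int_\Omega\big(s-\frac{M}{\int_\Omega s}s+m\big)=-\frac{\int_\Omega s}{\sqrt{|\Omega|}}\neq0$, which contradicts \eqref{ipo:s}.

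\emph{Passage to the limit.} With $w_d$ bounded in $L^2$, the equation makes it bounded in $H^1$, so $w_d\rightharpoonup w$ weakly along subsequences. Passing to the limit in the equation yields \eqref{eq:w}. Strong $H^1$ convergence follows by testing the difference of the equations with $w_d-w$.

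Passing to the limit in the integrated identity, using $\eta_d\to\eta$ in $H^1$ and $d\ln(1+\frac{\eta_d}{dU_\infty})\to\eta/U_\infty$, gives
\[
-\frac{p_\infty}{U_\infty}\int_\Omega s\eta-\int_\Omega w\Big(s-\tfrac{M}{\int_\Omega s}s+m\Big)=0,
\]
which is exactly \eqref{2-bis}.

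\emph{Uniqueness.} Solutions of \eqref{eq:w} differ by constants $\beta$. Adding $\beta$ changes the left-hand side of \eqref{2-bis} by $\beta(M-\int_\Omega s-M)=-\beta\int_\Omega s\neq0$. Hence $w$ is uniquely determined, and the whole family $d(p_d-p_\infty)$ converges to $w$ in $H^1(\Omega)$. This also uses that $\eta$ is unique by Lemma \ref{le:expd}.
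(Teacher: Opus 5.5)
Your proof is correct and follows essentially the same route as the paper. Uniform bounds give $p_d\to p_\infty$, with the constant fixed by integrating \eqref{adjapp}; a contradiction argument on $w_d/\|w_d\|_2$, applied to the integrated equation multiplied by $d$, gives the $L^2$ bound; passing to the limit in the equation and in the integrated identity yields \eqref{eq:w} and \eqref{2-bis}; and uniqueness follows because adding a constant $\beta$ shifts the left-hand side of \eqref{2-bis} by $-\beta\int_\Omega s\neq 0$. The only difference is cosmetic: you expand $p_d=p_\infty+w_d/d$ explicitly in the equation, whereas the paper keeps the factor $(p_d-p_\infty)$ in \eqref{w-unica}.
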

\begin{proof}
By Proposition \ref{lem:p-pos} we already know that $\|p_d\|_{H^1}\leq C$ for $d>\delta>0$. Then, as $p\to \infty$,  $p_d\to p_\infty$ weakly in $H^1$ and strongly in $L^2$ as $d\to \infty$.  Moreover $p_\infty$ is constant and the convergence is strong in $H^1(\Omega)$.  Further, integrating the equation satisfied by $p_d$, exploiting the boundary condition, and passing to the limit we get
\[p_\infty\int_\Omega s(x) =|\Omega|.\]
Let $w_d=d(p_{d}-p_{\infty})$, then $w_d$ solves
\[-\Delta w_d=p_d\left(s(x)\ln \frac K{u_d}-m(x)-s(x)\right)+1.
\]
Then $w_d$ is bounded in $H^1(\Omega)$, once we show that $w_d$ is bounded in 
$L^2(\Omega)$.  As in the proof of Lemma \ref{le:expd}, we argue by 
contradiction, and we assume that, up to a subsequence,  $\|w_d\|_2\to +\infty$.  
We define $\widetilde w_d:=\frac{w_d}{\|w_d\|_2}$ so that 
$\|\widetilde w_d\|_2=1$ and   $\widetilde w_d$ satisfies
\beq\label{eq:tildewd}
-\Delta \widetilde w_d=\frac 1{\|w_d\|_2}p_d\left(s(x)\ln \frac K{u_d}-m(x)-s(x)\right)+\frac 1{\|w_d\|_2}.
\eeq
Taking $\widetilde w_d$ as test function, we have
\[
\int_\Omega |\nabla \widetilde  w_d|^2=\frac 1{\|w_d\|_2}\int_\Omega \widetilde 
w_d p_d\left(s(x)\ln \frac K{u_d}-m(x)-s(x)\right)+\frac 1{\|w_d\|_2}\int_\Omega 
\widetilde w_d\leq \frac C{\|w_d\|_2}.
\]
This implies that there exists $\widetilde w\in H^1$ such that, $w$ is constant, 
 $\|\widetilde w\|_2=1$ 
and $\widetilde w_d\to \widetilde w$ strongly in $H^1(\Omega)$.
So that $\widetilde w=\frac 1{\sqrt{|\Omega|}}$. 
On the other hand, integrating \eqref{eq:tildewd}, and recalling that $u_d= U_\infty(1+\frac 1{dU_\infty}\eta_d)$, by Lemma \ref{le:expd}, we have
\beq\label{w-unica}\begin{split}
0&=\int_{\Omega}p_d\left(s(x)\ln \frac K{u_d}-m(x)-s(x)\right)+|\Omega|\\
&=\int_\Omega (p_d-p_\infty)\left(s(x)\ln \frac K{u_d}-m(x)-s(x)\right)+|\Omega|\\
&+p_\infty \int_\Omega
\left(s(x)\ln \frac K{U_\infty}-s(x) \ln (1+\frac 1{dU_\infty}\eta_d)-m(x)-s(x)\right)\\
&=\int_\Omega (p_d-p_\infty)\left(s(x)\ln \frac K{u_d}-m(x)-s(x)\right)
-p_\infty\int_\Omega s(x)\ln (1+\frac 1{dU_\infty}\eta_d).
\end{split}
\eeq
Multiplying by $\frac d{\|w_d\|_2}$, we obtain
\[p_\infty \frac d{\|w_d\|_2} \int_\Omega  s(x) \ln (1+\frac 1{dU_\infty}\eta_d)=\int_\Omega \widetilde w_d\left(s(x)\ln \frac K{u_d}-m(x)-s(x)\right).
\]
Passing to the limit 
\[0=-\widetilde w\int_\Omega s(x)<0\]
which gives a contradiction, showing that $\|w_d\|_2\leq C$. 
This proves that $w_d$ is bounded in $H^1(\Omega)$, so that there exists $w\in 
H^1$ such that $w_d\to w$ weakly in $H^1(\Omega)$ and strongly in $L^2(\Omega)
$. Passing to the limit into the weak formulation of the equation satisfied by $w_d$ 
we obtain \eqref{eq:w}. Then, it is easy to see that the convergence is strong in 
$H^1(\Omega)$.
 From \eqref{w-unica} we have 
\[\begin{split}
0&=\int_\Omega w_d\left(s(x)\ln \frac K{U_\infty}-s(x) \ln(1+\frac 1{dU_\infty}\eta_d)-m(x)-s(x)\right)
\\
&-dp_\infty\int_\Omega s(x) \ln (1+\frac 1{dU_\infty}\eta_d).
\end{split}\]
Recalling the definition of $U_\infty$ and the fact that $\eta_d\to \eta$ in $H^1$ and $w_d\to w$ in $H^1$ this gives
\[\frac{p_\infty}{U_\infty}\into s(x) \eta=\into m(x) w+s(x) w-s(x) w \frac M{\into s(x)}
\]
and implies that $w$ satisfies \eqref{2-bis}. This condition, together with \eqref{eq:w} implies that $w$ is uniquely determined.

\end{proof}
\begin{remark}\label{rm:aggiunti}
Let us observe that, given $\eta$ the solution to \eqref{eq:eta} and \eqref{2},  
it is possibile to obtain $w$ as an adjoint state of $\eta$ with respect to the optimization problem
\begin{equation}\label{eq:defJ1}
\max_{m\in \Mcal}J_1(m),\quad\text{where $J_{1}(m):=\int _\Omega \eta$},
\end{equation}
by considering  the augmented form
\[\begin{split}
H(m)=&\into \eta+p_{1}\left(\Delta \eta+U_{\infty}\Big(s(x)\frac{M}{\into s(x)}-m(x)\Big)\right)
\\
&+ p_{2}\left(\frac{M}{\into s(x)}-1\right)\into s(x)\eta-p_{2}\into m(x)\eta.
\end{split}\]
where $p_1\in H^1(\Omega)$ is an adjoint state and $p_2\in \R$.

Indeed, by computing the optimality conditions, one deduces that 
$p_{1}$ is a solution of   \eqref{eq:w} with $p_{2}$ in place of $p_{\infty}$. 
In addition, by integrating \eqref{eq:w} one immediately obtains that $p_{2}=p_{\infty}$. 
\end{remark}

\begin{lemma}\label{le:expd3}
For every $m$ fixed, let $\varphi_d:=p_du_d$.
Then
\beq\label{eq:limzd}
\lim_{d\to+\infty}d(\varphi_d-p_{\infty}U_\infty)=p_\infty\eta+U_\infty w
\quad  \hbox{ in }H^1(\Omega),
\eeq
where $\eta$ and $w$ are  defined in Lemma \ref{le:expd} and Lemma \ref{le:expdstar}. Moreover $z:=p_\infty\eta+U_\infty w$ is a solution to the problem
\begin{equation}\label{eq:zinfty}
\begin{cases}
-\Delta z =p_{\infty}U_\infty\left(2 s(x)\frac{M}{\into s}-2m(x)-s(x)\right)+U_\infty
\\
\frac{\partial z_\infty}{\partial \nu}=0.
\end{cases}
\end{equation}
\end{lemma}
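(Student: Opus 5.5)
The plan is to write $d(\varphi_d-p_\infty U_\infty)$ as a sum of terms whose limits are already known from Lemma \ref{le:expd} and Lemma \ref{le:expdstar}, and then to obtain \eqref{eq:zinfty} by adding the limit equations \eqref{eq:eta} and \eqref{eq:w}. The product decomposition is
\[
d(\varphi_d-p_\infty U_\infty)=d(p_d-p_\infty)\,u_d+p_\infty\, d(u_d-U_\infty)=w_d\,u_d+p_\infty\,\eta_d ,
\]
with $w_d=d(p_d-p_\infty)$ and $\eta_d=d(u_d-U_\infty)$. The second term converges to $p_\infty\eta$ in $H^1(\Omega)$ by Lemma \ref{le:expd}. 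For the first term, write $w_du_d-U_\infty w=(w_d-w)u_d+w(u_d-U_\infty)$.

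To show $w_du_d\to U_\infty w$ in $H^1(\Omega)$, I will control $L^2$ and gradient norms separately.
\begin{itemize}
\item In $L^2$: use $\bar\eps\le u_d\le K$, $w_d\to w$ in $L^2$ (Lemma \ref{le:expdstar}), and $u_d\to U_\infty$ in $L^2$ (Proposition \ref{pro:dinfinito}). Dominated convergence along a.e. convergent subsequences handles $w(u_d-U_\infty)$.
\item For the gradient, $\nabla(w_du_d)=u_d\nabla w_d+w_d\nabla u_d$. The term $u_d\nabla w_d\to U_\infty\nabla w$ in $L^2$ by the same argument as above.
\item The term $w_d\nabla u_d=\frac1d w_d\nabla\eta_d$ is the only delicate one, since it is a product of two $H^1$ functions. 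Here I use an $L^\infty$ bound on $w_d$ together with $\|\nabla\eta_d\|_2\le C$, which gives $\|w_d\nabla u_d\|_2\le C/d\to0$.
\end{itemize}

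The main obstacle is the uniform $L^\infty$ bound on $w_d$. I would get it from the equation $-\Delta w_d=p_d(s\ln\frac K{u_d}-m-s)+1$. The right-hand side is bounded in $L^\infty$ uniformly in $d$, by Lemma \ref{lem:p-pos} and \eqref{eq:bound}. Since $w_d$ is bounded in $L^2$ (proved in Lemma \ref{le:expdstar}), elliptic Neumann regularity gives $\|w_d\|_{W^{2,q}}\le C$ and hence $\|w_d\|_\infty\le C$ for large $q$. The same argument bounds $\eta_d$ in $W^{2,q}\cap L^\infty$.

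If that bound is awkward, the fallback is to use $\|\nabla u_d\|_\infty=\frac1d\|\nabla\eta_d\|_\infty\le C/d$, which follows from the $W^{2,q}$ bound on $\eta_d$ via the Sobolev embedding into $C^1$. Then $\|w_d\nabla u_d\|_2\le \frac Cd\|w_d\|_2\to0$. The two routes are equivalent in effort.

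Finally, $z=p_\infty\eta+U_\infty w$ solves a Neumann problem with
\[
-\Delta z=p_\infty U_\infty\Big(s\tfrac{M}{\into s}-m\Big)+U_\infty p_\infty\Big(s\tfrac M{\into s}-m-s\Big)+U_\infty ,
\]
which is exactly \eqref{eq:zinfty}. The Neumann condition is inherited from $\eta$ and $w$.

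Note that Lemma \ref{le:expd} gives convergence only up to a subsequence, but $\eta$ is uniquely characterized by \eqref{eq:eta} and \eqref{2}. Hence the whole family converges, and so does the limit in \eqref{eq:limzd}.
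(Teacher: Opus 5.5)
Your proposal is correct and follows the paper's route: the paper also expands $p_du_d=(p_\infty+\frac1d w_d)(U_\infty+\frac1d\eta_d)$, passes to the limit using Lemmas \ref{le:expd} and \ref{le:expdstar}, and reads off \eqref{eq:zinfty} from the limit equations \eqref{eq:eta} and \eqref{eq:w}. What you add is the uniform $W^{2,q}$, hence $L^\infty$, bound on $w_d$, which the paper leaves implicit and which is needed to show that the cross term $\frac1d\eta_d w_d$ vanishes in $H^1(\Omega)$ (the paper's display has $\frac1{d^2}$ there, but the correct factor is $\frac1d$, as your $w_du_d$ decomposition gives).
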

\begin{proof}
By the definiton of $\varphi_d$, using Lemma \ref{le:expd} and Lemma \ref{le:expdstar} we have that 
\[d(\varphi_d-p_{\infty}U_\infty)=p_\infty\eta_d+U_\infty w_d+\frac 1{d^2} \eta_dw_d.\]
We can then pass to the limit getting that $d(\varphi_d-p_{\infty}U_\infty)\to p_\infty\eta+U_\infty w$ in $H^1(\Omega)$. The fact that $z$ solves \eqref{eq:zinfty} is a consequence of \eqref{eq:limzd}.
\end{proof}
In the sequel we will also analize the asymptotical behavior of the optimal weight
$m^*$ with respect to $d$. For the sake of clearness, we  will emphasize the dependence on $d$ by writing $m^{*}_{d}$ in place of $m^{*}$.
\begin{corollary}\label{cor:expdstar}
Let $m^*_{d}=\overline{m}(x)\chi_{\omega^*_{d}}$ the solution to the maximum problem \eqref{eq:defmax}.  Then the following conclusions hold. 
\begin{enumerate}
\item
There exists $m_\infty\in \mathcal M$ such that
\[
m^*_{d} \overset{\ast}{\rightharpoonup} m_{\infty}, \quad \text{in $L^{\infty}(\Omega)$}.
\]
\item
Denoting with $u^*_d $ and $p^*_d$ the unique solution to \eqref{P} and \eqref{adjapp} associated with $m^*_{d}$,  it results
\[
\begin{split}
\lim_{d\to+\infty}d(u^*_{d}-U_{\infty})&=\eta_{\infty} \quad  \hbox{ in }H^1(\Omega),
\\
\lim_{d\to+\infty}d(p^*_{d}-p_{\infty})&=w_{\infty} \quad  \hbox{ in }H^1(\Omega),
\end{split}
\]
where $p_\infty=\frac{|\Omega|}{\int_\Omega s(x)}$,  $\eta_{\infty}$ and $w_{\infty}$ are solutions to \eqref{eq:eta},  \eqref{2} and to \eqref{eq:w}, \eqref{2-bis} corresponding to $m_\infty$ respectively.
\item
The function $\varphi^*_d=u^*_d p^*_d$ converges to $ \varphi_\infty:=p_{\infty}U_\infty$ in $H^{1}(\Omega)$ and
\[
\lim_{d\to+\infty}d(\varphi^*_{d}-p_{\infty}U_\infty)=z_{\infty}:=p_\infty \eta_{\infty}+U_\infty w_{\infty}
\quad \hbox{ in }H^1(\Omega).
\]
\end{enumerate}
\end{corollary}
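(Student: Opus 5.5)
The plan is to redo the arguments of Lemmas \ref{le:expd}, \ref{le:expdstar} and \ref{le:expd3} with a weight $m=m^*_d$ that now varies with $d$. The point to check is that those proofs only use bounds that are uniform over $\mathcal M$, together with weak-$*$ convergence of the weights.

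For item 1, $\mathcal M$ is bounded in $L^\infty(\Omega)$, since $0\le m^*_d\le \overline m$. It is also convex and weak-$*$ closed, because the constraints $0\le m\le\overline m$ and $\int_\Omega m=M$ pass to weak-$*$ limits; testing against $1\in L^1$ gives the integral. Banach--Alaoglu then gives a subsequence $m^*_{d}\overset{\ast}{\rightharpoonup} m_\infty\in\mathcal M$. As in the statement, all further limits are taken along this subsequence. Note that $m_\infty$ need not be bang-bang, and nothing below requires that.

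For item 2, set $\eta_d:=d(u^*_d-U_\infty)$. Since $\int_\Omega m^*_d=M$ for every $d$, $U_\infty$ is the same for all $d$. The bound \eqref{eq:bound} is uniform in $m\in\mathcal M$, because $\bar\e$ depends only on $\sup\overline m$ and $\underline s$. Hence the right-hand side of \eqref{eq:etad} is bounded in $L^\infty$ uniformly in $d$.

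I would then repeat the contradiction argument of Lemma \ref{le:expd} to show $\|\eta_d\|_2\le C$. The normalized $z_d$ converge strongly in $H^1$ to the constant $1/\sqrt{|\Omega|}$. In the integrated identity, the term $\int_\Omega z_d\,(s\ln\frac K{u^*_d}-m^*_d)$ converges to $\frac1{\sqrt{|\Omega|}}\int_\Omega(s\ln\frac K{U_\infty}-m_\infty)=0$: indeed $z_d\to z$ strongly in $L^2$ and $m^*_d\rightharpoonup m_\infty$ weakly in $L^2$, so the product passes to the limit. This yields the same contradiction $\int_\Omega s=0$.

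Next, $\eta_d\rightharpoonup\eta_\infty$ in $H^1$. Passing to the limit in the weak form of \eqref{eq:etad} gives \eqref{eq:eta} with $m_\infty$, since $m^*_d u^*_d\rightharpoonup m_\infty U_\infty$ (strong times weak). Passing to the limit in the integrated identity gives \eqref{2}, again because $\int_\Omega m^*_d\eta_d\to\int_\Omega m_\infty\eta_\infty$ by the strong $L^2$ convergence of $\eta_d$ (Rellich). For strong $H^1$ convergence, I would test \eqref{eq:etad} minus \eqref{eq:eta} with $\eta_d-\eta_\infty$. The right-hand sides differ by a sequence converging weakly in $L^2$, paired with a sequence tending to zero strongly in $L^2$, so $\|\nabla(\eta_d-\eta_\infty)\|_2\to0$. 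Uniqueness of the solution to \eqref{eq:eta}--\eqref{2} then identifies the limit as $\eta_\infty$.

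The same scheme applies to $w_d=d(p^*_d-p_\infty)$. Lemma \ref{lem:p-pos} gives $\|p^*_d\|_\infty\le C$ and $\|p^*_d\|_{H^1}\le C$ uniformly, with a constant independent of $m$, and $p^*_d\to p_\infty$ strongly because the limit is constant. The contradiction argument of Lemma \ref{le:expdstar} goes through verbatim, with products of the form $m^*_d\cdot(\text{strongly convergent})$ treated as above. This gives \eqref{eq:w} and \eqref{2-bis} for $m_\infty$, using $\eta_d\to\eta_\infty$ strongly in $L^2$ in the term $d\,p_\infty\int_\Omega s\ln(1+\eta_d/(dU_\infty))$.

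Item 3 then follows as in Lemma \ref{le:expd3}, from the identity
\[
d(\vfi^*_d-p_\infty U_\infty)=p_\infty\eta_d+U_\infty w_d+\frac1d\eta_d w_d .
\]
Here $\eta_d$ and $w_d$ are bounded in $L^\infty$ by elliptic regularity with uniform coefficients, so the last term vanishes in $H^1$.

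The main obstacle is exactly this replacement of a fixed $m$ by the weakly-$*$ convergent $m^*_d$. Every limit involving $m^*_d$ must be paired with a strongly convergent factor, and the uniform $L^\infty$ bounds on $\eta_d$ and $w_d$ must be checked so that the quadratic remainders, such as $\eta_d\ln(1+\eta_d/(dU_\infty))$, vanish.
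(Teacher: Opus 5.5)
Your proposal is correct and follows the same route as the paper: weak-$*$ compactness and closedness of $\mathcal M$, then rerunning Proposition \ref{pro:dinfinito} and Lemmas \ref{le:expd}, \ref{le:expdstar}, \ref{le:expd3} with $m^*_d$ in place of a fixed $m$, always pairing $m^*_d$ with a strongly convergent factor (your write-up is more detailed than the paper's sketch). The one ingredient you use tacitly, which the paper singles out, is $u^*_d\to U_\infty$ strongly for the varying weights; it follows at once from $\|\nabla u^*_d\|_2^2\le C/d$ and $\int_\Omega m^*_d=M$, and your cross term $\frac1d\eta_d w_d$ is the correct one (the paper's $\frac1{d^2}$ is a harmless typo).
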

\begin{proof}
Since $m^*_{d}$ is uniformly bounded in $L^\infty(\Omega)$, there exists 
$m_\infty\in L^\infty(\Omega)$ such that, up to a subsequence,  $m^{*}_{d} 
\overset{\ast}{\rightharpoonup} m_{\infty}$ in $L^{\infty}(\Omega)$. The fact 
that $m_\infty\in \mathcal M$ is a consequence of the weak-star convergence. This 
convergence is enough to repeat the proof of Proposition \ref{pro:dinfinito}, as $u^{*}_{d}\to U_{\infty}$ strongly in $L^{2}(\Omega)$. This crucial property also
holds for $z_{d}$ and $\eta_{d}$ in Lemma \ref{le:expd}, for $\widetilde{w}_{d}$ and $p_{d}$ in Lemma \ref{le:expdstar}. 

It is then easy to pass to the limit in the weak formulations  of the equations under consideration to obtain the desired conclusions.
\end{proof}

\begin{remark}
The same result holds also for the solution $(m_{d})_*$ of the minimum problem \eqref{eq:defmax}.
\end{remark}
\section{Proof of Theorem \ref{teo-concentration}}\label{se:thm13}
This section is devoted to the proof of Theorem \ref{teo-concentration}, namely we will show that, in dimension one, the positivity set of 
$m^{*}$ is an interval located at one of the extrema of $\Omega$.
We first obtain some preliminary results which might be of independent interest in the general framework, both respect to the dimension and to the assumption on $s$ and $m$. 

\begin{lemma}\label{le:J1convex}
Let $\eta$  be the solution to \eqref{eq:eta} that satisfies \eqref{2}. Then,
the functional $J_{1}$ defined in \eqref{eq:defJ1} is convex.
\end{lemma}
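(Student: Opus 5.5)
The plan is to write $J_1$ explicitly as an affine functional plus a nonnegative quadratic form in $m$. This relies on the fact that, on $\Mcal$, the constant $U_\infty=Ke^{-M/\into s}$ and the ratio $\alpha:=M/\into s$ do not depend on $m$.

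\textbf{Step 1 (splitting $\eta$).} For $m\in\Mcal$, let $\eta_0=\eta_0(m)$ be the unique solution of \eqref{eq:eta} with $\into\eta_0=0$. It exists because the right-hand side $U_\infty(\alpha s-m)$ has zero mean, since $\into m=M$. Explicitly $\eta_0=U_\infty G(\alpha s-m)$, where $G$ is the zero-mean Neumann inverse of $-\Delta$ on zero-mean data. Hence $m\mapsto\eta_0(m)$ is affine. By the uniqueness part of Lemma \ref{le:expd}, $\eta=\eta_0+c(m)$ for a constant $c(m)$ fixed by \eqref{2}. Inserting this into \eqref{2} and using $\into m=M$ gives
\[
c(m)\Big((\alpha-1)\into s-M\Big)=\into m\eta_0-(\alpha-1)\into s\eta_0 .
\]
Since $(\alpha-1)\into s-M=-\into s$, this yields
\[
c(m)=\frac{1}{\into s}\Big((\alpha-1)\into s\eta_0-\into m\eta_0\Big).
\]
Because $\into\eta_0=0$, we get $J_1(m)=|\Omega|\,c(m)$.

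\textbf{Step 2 (isolating the quadratic part).} The term $\into s\eta_0$ is affine in $m$. For the other term, write $m=\alpha s-(\alpha s-m)$ and test the equation for $\eta_0$ against $\eta_0$:
\[
-\into m\eta_0=-\alpha\into s\eta_0+\frac1{U_\infty}\into U_\infty(\alpha s-m)\eta_0=-\alpha\into s\eta_0+\frac1{U_\infty}\into|\nabla\eta_0|^2 .
\]
Therefore
\[
J_1(m)=\frac{|\Omega|}{\into s}\Big(-\into s\eta_0(m)+\frac1{U_\infty}\into|\nabla\eta_0(m)|^2\Big).
\]

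\textbf{Step 3 (convexity).} The first summand is affine in $m$. The second is $m\mapsto\|\nabla\eta_0(m)\|_2^2$, the composition of the affine map $m\mapsto\eta_0(m)$ with the convex quadratic $v\mapsto\|\nabla v\|_2^2$, so it is convex. Its coefficient $|\Omega|/(U_\infty\into s)$ is positive. Hence $J_1$ is convex on the convex set $\Mcal$.

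As a by-product, $J_1$ is strictly convex along segments $m_1\ne m_2$ in $\Mcal$. Indeed, $\nabla(\eta_0(m_1)-\eta_0(m_2))=0$ would force $m_1=m_2$ through the equation. This strict convexity should be useful later in the maximization of $J_1$.

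The only delicate point is the bookkeeping in Step 1. One must check that the normalization \eqref{2} determines $c(m)$ linearly in the data, with the $m$-independent coefficient $-\into s$ guaranteed by the constraint $\into m=M$. Once this is done, the convexity is immediate.
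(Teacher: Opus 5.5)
Your proof is correct. Its first half coincides with the paper's: the same splitting $\eta=\widetilde\eta+\beta$ with $\widetilde\eta$ of zero mean, and the same expression \eqref{eq:defbeta} for $\beta=c(m)$ obtained from \eqref{2}.

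The routes then diverge. The paper argues infinitesimally: it expands $\widetilde\eta_\e$ along an admissible perturbation $g$ and observes $\ddot{\widetilde\eta}=0$. It then computes $\dot\beta$ and $\ddot\beta$ through incremental quotients, and only at the end tests the equation for $\dot{\widetilde\eta}$ with $\dot{\widetilde\eta}$ to get $\ddot\beta=\frac{2}{U_\infty\into s}\into|\nabla\dot{\widetilde\eta}|^2>0$. You apply the same energy identity once, to $\eta_0$ itself. This gives the closed form $J_1(m)=\frac{|\Omega|}{\into s}\big(-\into s\eta_0+\frac{1}{U_\infty}\into|\nabla\eta_0|^2\big)$, that is, an affine term plus a positive definite quadratic form in $m$. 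The paper's $\ddot\beta$ is exactly the second derivative along $g$ of your formula for $c(m)=J_1(m)/|\Omega|$, since your $\dot\eta_0$ solves the same problem \eqref{eq:etapunto} as $\dot{\widetilde\eta}$.

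What your version buys:
\begin{itemize}
\item It is shorter and global, with no limit passage and no admissible-direction framework.
\item Strict convexity along every segment of $\Mcal$ is immediate. This is what Theorem~\ref{teo2MNP} actually needs to conclude that a maximizer of $J_1$ is an extreme point of $\Mcal$; the paper gets it from $\ddot\beta>0$ for $g\neq 0$.
\end{itemize}

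What the paper's version buys: its differential computation produces the first variation $\dot\beta$ explicitly. The paper reuses $\dot\beta$ in Theorem~\ref{teo2MNP} to describe $\omega_\infty$ as a sublevel set of $\widetilde z_\infty$, though this variation is just as easy to read off from your closed form.
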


\begin{proof}
Let us decompose $\eta$ by means on the projection on the eigenspace associated 
with the zero eigenvalue, namely the constant functions, and the set of
functions with zero average. 
That is, let  $\eta=\widetilde \eta+\beta$ with $\beta\in \R$ and $\widetilde \eta\in H^1(\Omega)$ with zero mean.  With this choice $\widetilde \eta$ is the unique solution to \eqref{eq:eta}, namely
\begin{equation}\label{eq:eta-hat}
\begin{cases}
-\Delta \widetilde\eta= U_{\infty}\left(s(x)\frac{M}{\into s}-m\right)
\\
\frac{\partial \widetilde\eta}{\partial \nu}=0\\
\int_\Omega \widetilde \eta =0.
\end{cases}
\end{equation}
In order to prove that $J_{1}$ is convex, we take 
 $m\in \mathcal M$ and $g$ an admissible perturbation for $m$.  We want to show that $\beta$ is twice Gateaux-differentiable in the direction $g$ and that it holds $\ddot{\beta}>0$ whenever $g\neq 0$.
\\
{\bf Step 1.} 
Let $m\in \mathcal M$ and $g$ be an admissible perturbation for $m$. 
Let $\widetilde\eta_\e$ be the solution to \eqref{eq:eta-hat} corresponding to $m_\e(x)=m(x)+\e g$. 
Let us first prove that the following expansion holds
\begin{equation}\label{eq:etaexp}
\widetilde\eta_{\eps}=\widetilde\eta +\eps \dot{\widetilde{\eta}}+o(\eps^{2}),
\end{equation}
where $\dot{\widetilde{\eta}}$ is a solution to the problem
\begin{equation}\label{eq:etapunto}
\begin{cases}
-\Delta \dot{\widetilde{\eta}}=-U_{\infty}g &\text{in $\Omega$}
\\
\into \dot{\widetilde{\eta}}=0
\\
\partial_{\nu }\dot{\widetilde{\eta}}=0 &\text{on $\partial \Omega$}.
\end{cases}
\end{equation}
Arguing as in the proof of Theorem \ref{teo-expansion} and in view of \eqref{eq:eta-hat}, we first deduce that
$\widetilde\eta_{\eps}\to \widetilde\eta $ strongly in $H^{1}(\Omega)$ as $\eps\to 0$.
Let us now consider $w_{\eps}:=\frac{\widetilde\eta_{\e}-\widetilde\eta }{\eps}$. The function
$w_{\e}$ belongs to $\widetilde X$ and solves
\[
-\Delta w_{\e}=-U_{\infty}g.
\]
 Using the Poincaré-Wirtinger inequality we have
\[c\int_\Omega w_{\e} ^2\leq \int_\Omega |\nabla w_{\e}|^2=-U_\infty \int_\Omega  gw_{\e} \leq C\|w_{\e}\|_2
\]
then it is easy to obtain that $w_{\e}$ strongly converges in $H^{1}(\Omega)$
to $\dot{\widetilde{\eta}}$, which solves \eqref{eq:etapunto}.
The conclusion follows by observing that \eqref{eq:etapunto} implies that
$\ddot{\widetilde\eta}\equiv0$.
\\
{\bf Step 2.}
Let us first observe that \eqref{2} implies that $\beta$ is given by
\begin{equation}\label{eq:defbeta}
\beta=\dfrac1{\into s(x)}\left[\left(\dfrac{M}{\into s}-1\right)\into s \widetilde{\eta}-\into m\widetilde{\eta}\right].
\end{equation}
Let us now consider the incremental quotient of 
$\beta$. It results
\[
\frac{\beta_{\e}-\beta}{\e}=\frac1{\into s}\left[\left(\dfrac{M}{\into s}-1\right)\into s \frac{\widetilde{\eta}_{\e}-\widetilde{\eta} }\e -\into m\frac{\widetilde{\eta}_{\e}-\widetilde{\eta} }\e- \into g\widetilde{\eta}_{\e}\right].
\]
Then, taking into consideration what we proved in Step 1, we obtain
\[
\dot{\beta}=-\frac1{\into s }\left\{-\left(\dfrac{M}{\into s}-1\right) \into s\dot{\widetilde \eta} +\into m\dot{\widetilde \eta}+g\widetilde{\eta}\right\}.
\]
Let us now compute $\ddot{\beta}$. It results
\[
\begin{split}
\frac{\dot\beta_{\e}-\dot\beta}{\e} &=-\frac1{\into s }
\left\{-\left(\dfrac{M}{\into s}-1\right) \into s \frac{\dot{\widetilde{\eta}}_{\e}-\dot{\widetilde{\eta}}}{\e}+
\into m \frac{\dot{\widetilde{\eta}}_{\e}-\dot{\widetilde{\eta}}}{\e}+
\into g \dot{\widetilde{\eta}}_{\e}+\into g \frac{\widetilde{\eta}_{\e}-\widetilde{\eta}}{\e} \right\}.
\end{split}\]
Passing to the limit as $\eps\to0$ and observing that 
$\dot{\widetilde{\eta}}_{\e}\to \dot{\widetilde{\eta}}$ strongly in $L^{1}(\Omega)$,
we deduce that
\[
\ddot{\beta}=-\frac2{\into s } \into g \dot{\widetilde{\eta}}.
\]
Finally, taking $\dot{\widetilde{\eta}}$ as test function in \eqref{eq:etapunto} 
yields
\[
\ddot{\beta}=-\frac2{\into s} \into g \dot{\widetilde{\eta}}=\frac2{U_{\infty}\into s }\into |\nabla \dot{\widetilde{\eta}}|^{2}>0
\]
whenever $g\neq 0$, implying the desired conclusion.
\end{proof}

\begin{theorem}\label{teo2MNP}
Let $m^{*}_{d}=\overline{m}(x)\chi_{\omega^*_{d}}$ the solution to the maximum problem \eqref{eq:defmax}.  Then 
\[
\lim_{d\to +\infty}m^{*}_{d}=\overline{m}(x)\chi_{\omega_{\infty}}, \quad \text{in $L^{p}(\Omega)$ for every $p\in [1,+\infty)$}
\]
where $\omega_{\infty}$ is  a measurable set such that either
\[\omega_\infty=\{x\in \Omega\setminus \Omega': z_\infty(x)< t\}  \ \ \ \text{ or } \ \omega_\infty=\{x\in \Omega\setminus \Omega': z_\infty(x)\leq t\}\]
 where $t=\inf_{\omega_\infty} z_\infty(x)$ and  $z_\infty$ is introduced in conclusion (3) of Corollary \ref{cor:expdstar}. 
\end{theorem}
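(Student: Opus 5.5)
We would follow the strategy of \cite[Theorem 3]{manaprjmpa}: show that the weak-$*$ limit $m_\infty$ of Corollary \ref{cor:expdstar} maximizes the first order functional $J_1$ of \eqref{eq:defJ1}, and then transfer the bang-bang structure to it.

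\textbf{Step 1: $m_\infty$ maximizes $J_1$.} By Lemma \ref{le:expd} and the proof of Corollary \ref{cor:expdstar}, for every fixed $m\in\Mcal$
\[
J_d(m)=U_\infty|\Omega|+\tfrac1d J_1(m)+o(1/d).
\]
We need this expansion to hold uniformly along the sequence $m^*_d\overset{\ast}{\rightharpoonup} m_\infty$. Corollary \ref{cor:expdstar}(2) gives $d(u^*_d-U_\infty)\to\eta_\infty$ in $H^1$, so $d(J_d(m^*_d)-U_\infty|\Omega|)\to J_1(m_\infty)$. Optimality gives $J_d(m^*_d)\ge J_d(m)$ for every $m$. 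Subtracting $U_\infty|\Omega|$, multiplying by $d$ and letting $d\to\infty$ yields $J_1(m_\infty)\ge J_1(m)$ for all $m\in\Mcal$.

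\textbf{Step 2: $m_\infty$ is bang-bang.} By Lemma \ref{le:J1convex}, $J_1$ is strictly convex along admissible directions, since $\ddot\beta>0$ and $\widetilde\eta$ is affine in $m$. Suppose $\{0<m_\infty<\overline m\}$ had positive measure. Then we could choose $g$ as in \eqref{gg}, supported there, with $m_\infty\pm\e g\in\Mcal$. Convexity gives $\max\{J_1(m_\infty+\e g),J_1(m_\infty-\e g)\}>J_1(m_\infty)$, contradicting maximality. Hence $m_\infty=\overline m\chi_{\omega_\infty}$.

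\textbf{Step 3: level-set description.} We compute the first variation of $J_1$ using the adjoint $w$ of Remark \ref{rm:aggiunti}. The expected identity is $J_1'(m_\infty)[g]=-\frac{1}{p_\infty}\into g\, z_\infty$ up to a positive constant. This is the analogue of Lemma \ref{le:firstinfo}, with $z_\infty=p_\infty\eta_\infty+U_\infty w_\infty$ playing the role of the switching function. The sign and constant should be checked by testing \eqref{eq:eta} against $w$, using the derivative of \eqref{2} with respect to $m$ and \eqref{2-bis}.

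Given this identity, the argument of Proposition \ref{prop:11.2} gives $z_\infty\le t\le z_\infty$ in the appropriate sense on $\omega_\infty$ and on $\Omega\setminus(\Omega'\cup\omega_\infty)$. The case distinction on the level set $\{z_\infty=t\}$ then follows as at the end of the proof of Theorem \ref{thm:mbang}, now using strict convexity instead of the estimate \eqref{sec-ord}.

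\textbf{Step 4: strong convergence.} Since $m^*_d=\overline m\chi_{\omega^*_d}$ and the limit is also of the form $\overline m\chi_{\omega_\infty}$, we would show $\|m_d^*\|_2^2\to\|m_\infty\|_2^2$. Using $\overline m\chi_{\omega^*_d}\overset{\ast}{\rightharpoonup}\overline m\chi_{\omega_\infty}$, we have
\[
\into \overline m\,\chi_{\omega_d^*}\chi_{\omega_\infty}\to\into\overline m\chi_{\omega_\infty}.
\]
Combining this with the common mass $M$ gives $\into\overline m|\chi_{\omega^*_d}-\chi_{\omega_\infty}|\to0$. Therefore $\chi_{\omega^*_d}\to\chi_{\omega_\infty}$ in $L^1(\{\overline m>0\})$, and $m^*_d\to m_\infty$ in every $L^p$ with $p<\infty$, by boundedness.

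Uniqueness of the limit is not claimed, so subsequences suffice throughout. The main obstacle is Step 3: deriving the correct first-variation formula for $J_1$. The extra scalar constraint \eqref{2} makes this delicate, and one must confirm that the constant $p_2$ equals $p_\infty$, as Remark \ref{rm:aggiunti} asserts, so that $z_\infty$ is exactly the right switching function.
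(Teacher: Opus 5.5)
Your proposal is correct and follows the paper's proof essentially step by step. You show that $m_\infty$ maximizes $J_1$ via Corollary \ref{cor:expdstar}, use the convexity of Lemma \ref{le:J1convex} to make $m_\infty$ bang-bang, and obtain the level-set description from the first variation of $J_1$ with $z_\infty$ as switching function; the only differences are that you argue the bang-bang step and the strong $L^p$ convergence by hand, where the paper cites \cite[Proposition 7.2.17]{HP} and \cite[Proposition 2.2.1]{HP}. The step you flag as the main obstacle is in fact a direct computation: testing \eqref{eq:w} with the derivative $\dot\eta$ of $\eta$ in the direction $g$ and using the derivative of \eqref{2} gives exactly $J_1'(m)[g]=-\int_\Omega g\,z_\infty$, with constant $1$ rather than $1/p_\infty$ and without needing \eqref{2-bis} since $\int_\Omega g=0$, which matches the paper's $\dot\beta=-\frac{1}{|\Omega|}\int_\Omega \widetilde z_\infty\, g$.
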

\begin{proof}
Since   $m^{*}_{d}\in \Mcal$ there exists $m_{\infty}$  such that, up to a subsequence,
\[
m^{*}_{d} \overset{\ast}{\rightharpoonup} m_{\infty}, \quad \text{weak-$*$ in $L^{\infty}$}.
\]
Moreover, for every $m\in \Mcal$ it holds, also using Lemma \ref{le:expd},
\[
d\left(J_{d}(m^{*}_{d})-\int_\Omega U_\infty\right)\geq  d\left(J_{d}(m)-\int_\Omega U_\infty\right)=J_{1}(m)+O\left(\frac1d\right)
\]
where $J_{1}$ is the functional defined in \eqref{eq:defJ1}.
Then, passing to the limit in the above inequality and applying  Corollary \ref{cor:expdstar}
we obtain that $m_{\infty}$ is a maximizer of $J_{1}$ on $\Mcal$.
Namely, 
\begin{equation}\label{eq:minftymax}
J_{1}(m_{\infty})=\into \eta_{\infty}\geq \into \eta=J_{1}(m), \quad \text{for every $m\in \Mcal$,}
\end{equation}
as $\eta_{\infty}$ is  the unique solution to \eqref{eq:eta} and \eqref{2}, associated with $m_{\infty}$.
Then, Lemma \ref{le:J1convex} implies that $m_\infty$ is an extremals of the class $\Mcal$, which, in turn, yields that $m_\infty=\overline{m}\chi_{\omega_\infty}$
for some $\omega_\infty\subset \Omega\setminus \Omega'$ (see  \cite[Proposition 7.2.17]{HP}).
Then \cite[Proposition 2.2.1]{HP}
yields $m_{d}\to m_{\infty}$ strongly in $L^{p}(\Omega)$ for every $p\in [1,+\infty)$.
 Using the adjoint state $\widetilde w$, which is the unique the solution to \eqref{eq:w} with zero average, and the equation for $\dot{\widetilde{\eta}}$, it can be derived that 
\[\dot \beta=-\frac 1{|\Omega|}\into \widetilde z_{\infty}g
\]
where $\widetilde z_{\infty}$ is the unique solution to \eqref{eq:zinfty} with zero average.  Then, the conclusion follows reasoning as in Proposition \ref{prop:11.2} and the proof of Theorem \ref{thm:mbang} supposing by contradiction that the sets $\{x\in \Omega\setminus \Omega '  : \widetilde z_{\infty}(x)= \widetilde t\}\cap \omega_\infty$ and $\{x\in \Omega\setminus \Omega '  : \widetilde z_{\infty}(x)= \widetilde t\} \setminus\omega_\infty$ have positive measure.
\end{proof}

 \begin{remark} For the solution $(m_d)_*$ of the minimum problem \eqref{eq:defmax} 
we can obtain that,  up to a subsequence,
$(m_d)_{*} \overset{\ast}{\rightharpoonup} (m_{\infty})_*$,  weak-$*$ in $L^{\infty}$ and $(m_{\infty})_*$ is a minimizer of $J_1$ on $\Mcal$.
\end{remark}

\begin{lemma}
Assume  
\begin{equation}\label{ipo:sconst}
s(x)\equiv s\in (0,+\infty).
\end{equation}
Then, $\overline{m}(x)\chi_{\omega_{\infty}}$ is a solution to the problem
\begin{equation}\label{eq:nuovoprob}
\min_{\Mcal}\min_{  \widetilde{X}}\widetilde\Ecal
\end{equation}
where  $\widetilde\Ecal :  \widetilde{X}:=\{v\in H^1(\Omega): \int _\Omega v=0\} \mapsto \R$ is defined by
\begin{equation}\label{eq:defEt}
\widetilde\Ecal(v):=\frac 12 \int_\Omega |\nabla v|^2+U_\infty\int_\Omega m(x) v .
\end{equation}	
\end{lemma}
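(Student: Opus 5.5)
The plan is to show that, when $s$ is constant, the inner minimum $\min_{\widetilde X}\widetilde\Ecal$ is a negative multiple of the first-order functional $J_1(m)=\into\eta$ introduced in \eqref{eq:defJ1}. Maximizing $J_1$ over $\Mcal$ then becomes equivalent to minimizing $\min_{\widetilde X}\widetilde\Ecal$ over $\Mcal$. The conclusion will follow from the proof of Theorem \ref{teo2MNP}, where it is shown (see \eqref{eq:minftymax}) that $m_\infty=\overline{m}\chi_{\omega_\infty}$ maximizes $J_1$ on $\Mcal$.

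\emph{Step 1 (computing $J_1$).} I will use the decomposition $\eta=\widetilde\eta+\beta$ from the proof of Lemma \ref{le:J1convex}, where $\widetilde\eta$ solves \eqref{eq:eta-hat} and $\beta$ is given by \eqref{eq:defbeta}. Under \eqref{ipo:sconst} we have $\into s=s|\Omega|$ and $\into\widetilde\eta=0$. The term $\into s\widetilde\eta$ in \eqref{eq:defbeta} therefore vanishes, and
\[
\beta=-\frac{1}{s|\Omega|}\into m\widetilde\eta, \qquad J_1(m)=\into\eta=|\Omega|\beta=-\frac1s\into m\widetilde\eta .
\]
With $s$ constant, \eqref{eq:eta-hat} becomes $-\Delta\widetilde\eta=U_\infty\big(\frac{M}{|\Omega|}-m\big)$ with Neumann conditions and zero mean.

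\emph{Step 2 (the inner minimization).} For fixed $m\in\Mcal$, the functional $\widetilde\Ecal$ is strictly convex and coercive on $\widetilde X$, by the Poincaré--Wirtinger inequality. Hence it has a unique minimizer $v_m$. Its Euler--Lagrange equation on the zero-mean space carries a Lagrange multiplier:
\[
-\Delta v_m+U_\infty m=\lambda,\qquad \partial_\nu v_m=0 .
\]
Integrating over $\Omega$ gives $\lambda=U_\infty M/|\Omega|$, so $v_m=\widetilde\eta$ by uniqueness. Testing this equation with $\widetilde\eta$ and using $\into\widetilde\eta=0$ gives $\into|\nabla\widetilde\eta|^2=-U_\infty\into m\widetilde\eta$. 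Consequently
\[
\min_{\widetilde X}\widetilde\Ecal=\frac12\into|\nabla\widetilde\eta|^2+U_\infty\into m\widetilde\eta=\frac{U_\infty}{2}\into m\widetilde\eta=-\frac{sU_\infty}{2}\,J_1(m).
\]

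\emph{Step 3 (conclusion).} Since $sU_\infty/2>0$, a control $m$ maximizes $J_1$ on $\Mcal$ if and only if it minimizes $m\mapsto\min_{\widetilde X}\widetilde\Ecal$ on $\Mcal$. By \eqref{eq:minftymax}, $m_\infty=\overline{m}\chi_{\omega_\infty}$ maximizes $J_1$, so it solves \eqref{eq:nuovoprob}. The only delicate point is the identification $v_m=\widetilde\eta$, specifically handling the multiplier from the mean-zero constraint. Once that is done, the remaining steps are direct computations.
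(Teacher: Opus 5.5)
Your proposal is correct and follows the paper's argument. Both proofs show $\min_{\widetilde X}\widetilde\Ecal=-\frac{sU_\infty}{2}J_1(m)$ via the decomposition $\eta=\widetilde\eta+\beta$ and the identity $\into|\nabla\widetilde\eta|^2=-U_\infty\into m\widetilde\eta$, and then invoke \eqref{eq:minftymax}. The only differences are cosmetic: you read off $J_1=-\frac1s\into m\widetilde\eta$ directly from \eqref{eq:defbeta} rather than testing \eqref{eq:eta} with $\eta$ and using \eqref{2}, and you spell out the Lagrange-multiplier identification of $\widetilde\eta$ as the minimizer, which the paper calls standard.
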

\begin{proof}
Note that, in this case \eqref{2} reduces to
\beq\label{eq:22}
s\int_\Omega  \eta =\frac M{|\Omega|}\int_\Omega \eta-\int_\Omega m(x) \eta.
\eeq
Next, using $\eta$ as test function in \eqref{eq:eta}, and using \eqref{eq:22},  we have
\beq\label{1}
\int_\Omega |\nabla \eta |^2=U_\infty \left(\frac M{ |\Omega| }\int_\Omega \eta-\int_\Omega m(x) \eta\right)=U_\infty s\int_\Omega  \eta. 
\eeq
As a consequence,
\beq\label{3}
J_1(m)=\int_\Omega \eta=\frac 1 {s U_\infty}\int _\Omega |\nabla \eta |^2, \quad \text{for every $m\in \Mcal$.}
\eeq
As done in Lemma \ref{le:J1convex} we use the decomposition $\eta=\widetilde \eta+\beta$ with $\beta\in \R$ and $\widetilde  \eta\in \widetilde{X}:=\{v\in H^{1}(\Omega) : \into v=0\}$.  With this choice $\widetilde \eta$ is the unique solution to \eqref{eq:eta-hat} and, by \eqref{3} the constant $\beta$ is given by
\beq
\beta= \frac 1 {sU_\infty |\Omega|}\int _\Omega |\nabla \widetilde \eta |^2.
\eeq
Once one settles the problem \eqref{eq:eta} in $\widetilde{X}$, it is standard to 
see  that $\widetilde \eta$ minimizes the energy $\widetilde\Ecal(w)$ among all the function $w\in \widetilde{X}$. 
Taking as test function $\widetilde{\eta}$ in \eqref{eq:eta-hat} we deduce 
\begin{equation}\label{eq:livelloeta}
\int_\Omega  |\nabla \widetilde\eta |^2=-U_\infty\int_\Omega m (x) \widetilde \eta 
\end{equation}
which implies that 
\[\widetilde\Ecal(\widetilde \eta)=-\frac 12 \int_\Omega  |\nabla \widetilde\eta |^2=\frac12\into U_{\infty}m(x)\widetilde\eta.\]
Then, in view of \eqref{3}
\beq
J_1(m )=\frac 1{sU_\infty}\int_\Omega |\nabla \widetilde\eta |^2=\frac 1{sU_\infty}\left( -2 \widetilde\Ecal(\widetilde \eta)\right)=-\frac 2{sU_\infty} \min _{w\in \widetilde{X}}\widetilde\Ecal(w).
\eeq
In order to obtain \eqref{eq:nuovoprob}, we recall that
 $m_{\infty}$ is a maximizer of $J_{1}$ on $\Mcal$, so that
\[
\begin{split}
{J_{1}(m_{\infty})=\max_{\Mcal }J_{1}(m)=\max_{\Mcal}\left[-\frac{2}{sU_{\infty}}\min _{w\in \widetilde{X}}\widetilde\Ecal(w)\right]
=-\frac2{sU_\infty} \min_{\Mcal}\min _{w\in \widetilde{X}}\widetilde\Ecal(w),}
\end{split}\]
yielding the conclusion.

\end{proof}

From now on we focus our attention to the one-dimensional case.

\begin{corollary}\label{cor:10.4}
Let $\Omega=(0,1)$. Assume \eqref{ipo:sconst} and $\overline{m}\equiv 1$. Then,  $ \chi_{\omega_{\infty}}$ introduced in Theorem \ref{teo2MNP} is either $\chi_{(0,M)}$ or $\chi_{(1-M,1)}$.
\end{corollary}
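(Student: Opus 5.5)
By the previous Lemma, with $\overline{m}\equiv 1$ and $s$ constant, $m_\infty=\chi_{\omega_\infty}$ solves the min–min problem \eqref{eq:nuovoprob}. We therefore have to show that in $\Omega=(0,1)$ the only minimizers of
\[
(m,v)\mapsto \frac12\int_0^1 |v'|^2+U_\infty\int_0^1 m v
\]
over $m=\chi_\omega$ with $|\omega|=M$ and $v\in\widetilde X$ are $\chi_{(0,M)}$ and $\chi_{(1-M,1)}$. The plan is to exchange the order of minimization and, for fixed $v$, replace $m$ by the optimal bathtub choice. This is the strategy of \cite[Theorem 3]{manaprjmpa}.

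First, for fixed $v\in\widetilde X$, the minimum of $\int m v$ over $\Mcal$ is attained by placing $m=1$ on a sublevel set of $v$ of measure $M$. This gives $\int_0^1 m v\ge \int_0^M v^*$, where $v^*$ denotes the increasing rearrangement of $v$ on $(0,1)$. Equality holds iff $\{m=1\}$ is, up to null sets, a sublevel set $\{v<t\}$ (possibly completed by part of $\{v=t\}$).

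Second, the Pólya–Szegő inequality for the monotone rearrangement in one dimension gives $v^*\in\widetilde X$ and $\int |(v^*)'|^2\le\int|v'|^2$. Hence
\[
\widetilde\Ecal(v)+U_\infty\!\int (m-1)v\;\ge\; \frac12\int|(v^*)'|^2+U_\infty\int_0^1\chi_{(0,M)}v^*,
\]
so the min–min value is attained by the pair $(\chi_{(0,M)},v^*)$. Symmetrically, the decreasing rearrangement shows that $(\chi_{(1-M,1)},\cdot)$ is also optimal.

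Third, we treat the equality case. If $(\chi_{\omega_\infty},\widetilde\eta_\infty)$ is optimal, all the inequalities above must be equalities. In particular $\int|\widetilde\eta_\infty'|^2=\int|(\widetilde\eta_\infty^*)'|^2$, and $\omega_\infty$ is a sublevel set of $\widetilde\eta_\infty$. Here $\widetilde\eta_\infty$ solves $-\widetilde\eta''=U_\infty(M-\chi_{\omega_\infty})$ with Neumann conditions, so it is $C^1$ and piecewise quadratic.

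The main obstacle is the rigidity in the equality case of Pólya–Szegő. In one dimension the rearrangement can preserve the Dirichlet energy without the function being monotone only when $\widetilde\eta_\infty'$ vanishes on a set of positive measure where the function takes several levels. To exclude this, I would use the equation. On any interval where $\widetilde\eta_\infty'\equiv0$ we need $M-\chi_{\omega_\infty}=0$ a.e., which is impossible since $0<M<1$. Thus $\{\widetilde\eta_\infty'=0\}$ is null, which is exactly the condition under which equality forces $\widetilde\eta_\infty$ to be monotone (the 1D Brothers–Ziemer type argument; alternatively, count preimages: equality in $\int|v'|^2\ge\int|(v^*)'|^2$ forces each level to have a single preimage). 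Once $\widetilde\eta_\infty$ is strictly monotone, its sublevel set of measure $M$ is $(0,M)$ or $(1-M,1)$. Theorem \ref{teo2MNP} identifies $\omega_\infty$ as such a sublevel set, which concludes the proof.
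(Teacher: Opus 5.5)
\textbf{Verdict: there is a real gap, at the step you yourself call the main obstacle, but it closes with ingredients you already have.}

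\textbf{The gap.} From the fact that $\widetilde\eta_\infty'$ cannot vanish identically on an interval, you conclude $|\{\widetilde\eta_\infty'=0\}|=0$. That does not follow: a closed set with empty interior can have positive measure. Nor can you fall back on ``$\widetilde\eta_\infty$ is piecewise quadratic''. At that stage $\omega_\infty$ is only a measurable set; even as a sublevel set of a continuous function it is a priori a countable union of intervals. Your derivation of monotonicity is explicitly conditioned on this nullity, so the argument breaks exactly there. (Minor: the left side of your display should be $\widetilde\Ecal(v)=\frac12\int|v'|^2+U_\infty\int(m-1)v$; as written the linear term is counted twice.)

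\textbf{Two ways to close it.}
\begin{itemize}
\item \emph{Stampacchia's theorem.} Since $\widetilde\eta_\infty'\in W^{1,\infty}(0,1)$, the second derivative $\widetilde\eta_\infty''=-U_\infty(M-\chi_{\omega_\infty})$ vanishes a.e. on $\{\widetilde\eta_\infty'=0\}$. This forces $\chi_{\omega_\infty}=M\in(0,1)$ a.e. there, so that set is null.
\item \emph{No nondegeneracy needed for rigidity in one dimension.} For absolutely continuous $v$ one has $|v(E)|\le\int_E|v'|$. Hence for a.e. level $t$, all preimages are points where $v'$ exists and is nonzero. Your preimage count, $\sum|v'|\cdot\sum 1/|v'|\ge N(t)^2$, then forces $N(t)=1$ for a.e. $t$. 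So $\widetilde\eta_\infty$ is monotone with no hypothesis on $\{\widetilde\eta_\infty'=0\}$, since a continuous non-monotone function has an interval of levels with two preimages. This is what the paper uses.
\end{itemize}

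\textbf{Where nondegeneracy is really needed.} It enters only afterwards, to rule out a plateau of $\widetilde\eta_\infty$ at the threshold level. Such a plateau would allow $\omega_\infty=\{\widetilde\eta_\infty<t\}\cup E$ with $E$ an arbitrary part of the plateau. For a monotone function a plateau is an interval, so your interval argument excludes it verbatim and gives strict monotonicity. The paper obtains this last point by citing \cite{louc}.

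\textbf{Comparison with the paper.} Otherwise your route is the paper's: the min--min reformulation, Hardy--Littlewood plus P\'olya--Szeg\H{o} for the monotone rearrangement, and rigidity in the equality case. One difference is a nice shortcut. You read off that $\omega_\infty$ is a sublevel set of $\widetilde\eta_\infty$ directly from equality in the bathtub step. The paper instead needs the adjoint identity $z_\infty=\frac2s\eta_\infty+\mathrm{const}$ to transfer the description in Theorem~\ref{teo2MNP}. That theorem concerns $z_\infty$, not $\widetilde\eta_\infty$, so your closing appeal to it is both redundant and misattributed.
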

\begin{proof} 
In the following argument, to simplify the notation,  we will write  $\eta$ in the place of  $\eta_\infty$.  Let us observe that
\[
\begin{split}
\min_{\Mcal}\min_{u\in \tilde{X}}\widetilde{\Ecal}
&=\widetilde{\Ecal}(\widetilde{\eta})=\frac12\int_{0}^{1}| \widetilde{\eta}'|^{2}+U_{\infty}\int_{0}^{1} m_{\infty}(x)\widetilde{\eta}(x)
\\
&=
\frac12\int_{0}^{1}| \widetilde{\eta}'|^{2}-U_{\infty}\int_{0}^{1} (1-m_{\infty}(x))\widetilde{\eta}(x),
\end{split}\]
as $\widetilde{\eta}$ has zero mean.
Then, denoting with $f_{d}$  the monotone decreasing rearrangement of $f$ and applying the Hardy-Littlewood and Polya inequalities (\cite{Kaw}) we obtain
\[
\begin{split}
\widetilde{\Ecal}(\widetilde{\eta})
&\geq  \frac 12\int_{0}^{1}| (\widetilde{\eta}_{d})'|^{2}-U_{\infty}\int_{0}^{1} (1-m_{\infty}(x))_{d}\widetilde{\eta}_{d}(x)
\\
&=\frac 12 \int_{0}^{1}| (\widetilde{\eta}_{d})'|^{2}+U_{\infty}\int_{0}^{1} 
(m_{\infty}(x))_{i}\widetilde{\eta}_{d}(x)
\end{split}
\]
where $f_{i}$ denotes the monotone increasing rearrangement of $f$.
As $(m_{\infty}(x))_{i}\in \Mcal$ we deduce
\[
\begin{split}
\min_{\Mcal}\min_{u\in \tilde{X}}\widetilde{\Ecal}
&=\widetilde{\Ecal}(\widetilde{\eta})=\frac12\int_{0}^{1}| \widetilde{\eta}'|^{2}+U_{\infty}\int_{0}^{1} m_{\infty}(x)\widetilde{\eta}(x)
\\
&\geq \frac 12
\int_{0}^{1}| (\widetilde{\eta}_{d})'|^{2}+U_{\infty}\int_{0}^{1} 
(m_{\infty}(x))_{i}\widetilde{\eta}_{d}(x)
\geq \min_{\Mcal}\min_{u\in \tilde{X}}\widetilde{\Ecal}.
\end{split}
\]
As a consequence, all the above inequalities are equalities and by exploiting 
the equality case in the Polya inequality we deduce that $\widetilde\eta $ is monotone. 
Let us now observe that, under the assumption \eqref{ipo:sconst}, $p_{\infty}$ introduced in Lemma \ref{le:expdstar} is given by $p_{\infty}=1/s$. So that
 the function $w_{\infty}$ introduced in Corollary \eqref{cor:expdstar} is the unique solution
to 
\[
\begin{cases}
-\Delta w_{\infty}=\frac1s\left(\frac{M}{|\Omega|}-m\right)
\\
\frac{\partial w_{\infty}}{\partial \nu}=0
\end{cases}
\]
satisfying \eqref{2-bis}.
Then, recalling \eqref{eq:eta}, we obtain that $w_{\infty}=\frac1{U_{\infty}s}\eta_\infty+c_{0}$ where $c_{0}$ can be obtained exploiting \eqref{2} and \eqref{2-bis}.
As a consequence, $z_{\infty}$ introduced in conclusion {\it (3)} of Corollary \ref{cor:expdstar} is given by $z_{\infty}=p_{\infty}\eta_{\infty}+U_{\infty}w_{\infty} =\frac2s\eta_{\infty}+U_{\infty}c_{0}$. So that,  $\omega_{\infty}$ turns out to be a sublevel set of $\eta_{\infty}$ as well. 
In addition, by applying \cite[Theorem 1.1]{louc} we deduce that the level sets of $\widetilde{\eta}_{\infty}$ have zero measure as $M<1$. As a consequence,  $m_{\infty}(x)=\chi_{\omega_{\infty}}$ with 
$\omega_{\infty}$ an open  sub-level set of $\widetilde{\eta}_\infty$.
Then, the conclusion follows from the monotonicity property of $\eta_\infty$.
 \end{proof}
\begin{remark}\label{rmk:dim2}
We expect that the above results can be extended to the case of  $\Omega=(0,1)^{2}$ as done in \cite{manaprjmpa}. This may suggest 
the right location, at least for $d$ sufficiently large, of the optimal set.
This would be in accordance to the numerical study represented in Figure
\ref{fig:gompertz_2D_max}.
\end{remark}
We are now in the position to prove Theorem \ref{teo-concentration}.
\begin{proof}[Proof of theorem \ref{teo-concentration}]
By applying Lemma \ref{teo2MNP}, we deduce that, up to a subsequence, $m^*_d\to \chi_{\omega_\infty}$ in 
$L^1(\Omega)$, where $\omega_\infty=\{x\in (0,1): \eta_\infty<t\}$ for a suitable value of $t$, and,  Corollary \ref{cor:10.4} yields that  either $\omega_\infty=(0,M)$ or $\omega_\infty=(1-M,1)$.

In view of Corollary \ref{cor:expdstar},  we have that  $z_d:=d(\varphi^*_{d}-p_{\infty}U_\infty)\to z_\infty$ in $H^1(\Omega)$ and $z_\infty$ satisfies 
\[
\begin{cases}
-z_\infty ''=U_\infty \frac 2 {s}\left( M-\chi_{\omega_\infty}\right) & \text{in $(0,1)$}
\\
z'_{\infty}(0)=0, \, z'_{\infty}(1)=0.
\end{cases}\]
Let us assume that $\omega_\infty=(0,M)$. Then, since $M<1$ (by the definition of $\mathcal M$) we have that $z_\infty$ is strictly convex in $(0,M)$ and strictly concave in $(M,1)$. 

Then, the boundary conditions $z'_\infty(0)=z'_\infty(1)=0$ implies that $z'_\infty>0$ in $(0,1)$.
According to Theorem \ref{thm:mbang} we know that $m^*_d=\chi_{\omega^{*}_d}$ where 
\[
\{x\in (0,1): \varphi^*_d<t_d\}\subseteq \omega^{*}_d\subseteq \{x\in (0,1): \varphi^*_d<t_d\}\cup A_d
\]
where $t_d$ is a suitable value and $A_d= \{x\in (0,1): \varphi^*_d=t_d\}$.

Using the notation $\mu_d:=d\left(t_d-\frac {U_\infty}{s}\right)$, we can then rewrite $\omega^{*}_d$ in terms of $z_d$ and $\mu_d$ as follows 
\[
\{x\in (0,1): z_d<\mu_d\}\subseteq \omega^{*}_d\subseteq \{x\in (0,1): z_d<\mu_d\}\cup A_d, \quad \text{where $A_d\subseteq \{x\in (0,1): z_d=\mu_d\}$.}
\] 
Corollary \ref{cor:expdstar}, and Sobolev embedding yield that $z_d\to z_\infty $ 
in $C^{0,\alpha}(0,1)$ for $\alpha<\frac 12$. 
This implies that $\mu_d$ is bounded and, up to a subsequence, $\mu_d\to \mu_\infty$. 
The strict monotonicity of $z_\infty$ implies that there exists a unique point $x_\infty\in (0,1)$ such that $z_\infty(x_\infty)=\mu_\infty$.
First we observe that $|A_d|\to 0$ as $d\to \infty$ since $A_d\to \{x\in (0,1) : z_\infty=\mu_\infty\}=\{x_\infty\}$ by the strict monotonicity of $z_\infty$.
In addition,  $\{x\in (0,1): z_d<\mu_d\}\to \{x\in (0,1): z_\infty\le\mu_\infty\}
=(0,x_\infty]$ and $\{x\in (0,1): z_d>\mu_d\}\to \{x\in (0,1): z_\infty\ge\mu_\infty\}
=[x_\infty,1)$. Moreover, since $|\omega^{*}_d|=M$ for any $d$, then $x_\infty=M$.

Therefore, as $z_\infty$ is increasing, we have $z_\infty(0)<\mu_\infty$ and $z_\infty(1)>\mu_\infty$ and hence, by the uniform convergence of $z_d$
\[z_d<\mu_d \ \text{ in }(0,\e) \ \ ,  z_d>\mu_d \ \text{ in }(1-\e,1) \]
for a suitable value of $\e>0$ when $d$ is large enough.

In order to conclude the proof, let us observe that repeating the argument of Lemma \ref{le:expd3}, Corollary \ref{cor:expdstar} and using elliptic regularity estimates and Sobolev inequalities we obtain that $z_d\to z_\infty$ in $C^{1,\alpha}$ for some $\alpha>0$, so that
\[z'_d>0 \text{ in }(\e,1-\e).\]
This implies the existence of $x_d\in (0,1)$ such that $\{x\in (0,1): z_d<\mu_d\}=(0,x_d)$ and 
$\{x\in (0,1): z_d>\mu_d\}=(x_d,1)$.  
Concluding, the monotonicity of $z_d$ in $(\e,1-\e)$ also implies that $|A_d|=0$
and, as $|\omega^{*}_d|=M$ it results $x_d=M$ completing the proof.
\end{proof}
\begin{corollary}\label{cor:teoest}
Assume $\overline{m}= \chi_{\omega}$ for some $\omega\subset \Omega$ such that $|\omega|>M$. Then, the following conclusions hold for $d$ sufficiently large.
\\
(i) If $(0,M)\subset \omega$ (or if $(1-M,1)\subset \omega$) then the optimal weight $m^*_d$ for $J_{d}(m)$ is equal to $\chi_{(0,M)}$ (or to $\chi_{(1-M,1)}$).
\\
(ii) If both $(0,M)$ and $(1-M,1)$ are subsets of  $\omega$   then, both   $\chi_{(0,M)}$ and  $\chi_{(1-M,1)}$ are maximizers.
\\
(iii) If there exists at least two disjoint subsets 
$\omega_{1},\, \omega_{2}$ such that $\omega=\omega_{1}\cup\omega_{2}$ and 
$M>\max\{|\omega_1|, |\omega_2|\}$ then $\omega^{*}$ is not connected, while 
$\omega^{*}$ is connected if  one between $(0,M)$ or $(1-M,1)$ is contained in either $\omega_1$ or $\omega_2$.
\end{corollary}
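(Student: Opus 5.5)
The plan is a comparison argument between the restricted class $\Mcal_\omega:=\{0\le m\le\chi_\omega,\ \int_\Omega m=M\}$ and the unrestricted class $\Mcal_1:=\{0\le m\le 1,\ \int_\Omega m=M\}$, for which Theorem \ref{teo-concentration} is available. Note that $\Mcal_\omega\subset\Mcal_1$, and that $J_d$ does not depend on the constraint $\overline m$: it is the same functional on both classes. Throughout we keep $s$ constant and $\Omega=(0,1)$.

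For (i), let $d>\hat d$, with $\hat d$ given by Theorem \ref{teo-concentration}. Every maximizer of $J_d$ on $\Mcal_1$ is then either $\chi_{(0,M)}$ or $\chi_{(1-M,1)}$. One of these is optimal on $\Mcal_1$, and by the symmetry $x\mapsto 1-x$ (since $s$ is constant) both are, with the same value. If $(0,M)\subset\omega$, then $\chi_{(0,M)}\in\Mcal_\omega$, so
\[
\max_{\Mcal_\omega}J_d\le \max_{\Mcal_1}J_d=J_d(\chi_{(0,M)})\le\max_{\Mcal_\omega}J_d .
\]
Hence $\chi_{(0,M)}$ is a maximizer on $\Mcal_\omega$. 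Conversely, any maximizer $m^*_d$ on $\Mcal_\omega$ attains $\max_{\Mcal_1}J_d$, so it is a maximizer on $\Mcal_1$. Theorem \ref{teo-concentration} then forces $m^*_d\in\{\chi_{(0,M)},\chi_{(1-M,1)}\}$ a.e. If only $(0,M)\subset\omega$ (up to null sets), then $\chi_{(1-M,1)}\notin\Mcal_\omega$, because it is not dominated by $\chi_\omega$. So $m^*_d=\chi_{(0,M)}$, and the case $(1-M,1)\subset\omega$ is symmetric.

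For (ii), the same chain of inequalities applies to both candidates, since both now lie in $\Mcal_\omega$ and have equal value by reflection symmetry. Hence both are maximizers.

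For (iii), the connectedness claim in the second part follows from (i). If $(0,M)$ or $(1-M,1)$ is contained in $\omega_1$ or $\omega_2$, it is contained in $\omega$, so $\omega^*$ is one of those intervals and is connected. For the first part, Theorem \ref{thm:mbang} gives $m^*_d=\chi_{\omega^*}$ with $\omega^*\subset\omega$ and $|\omega^*|=M$. I will read the hypothesis as saying that $\omega_1,\omega_2$ are the separated components of $\omega$ (intervals at positive distance). If $\omega^*$ were connected up to null sets, it would be an interval contained in $\omega$, hence essentially inside a single component $\omega_i$. That would give $M=|\omega^*|\le|\omega_i|<M$, a contradiction. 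So $\omega^*$ meets both components with positive measure and is disconnected.

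The main obstacle is the careful handling of the hypotheses in (iii): making precise what ``disjoint subsets'' must mean (separated rather than merely disjoint) for the statement to hold, and working with representatives of $\omega^*$ up to null sets so that ``connected'' is meaningful. The rest reduces directly to the inclusion $\Mcal_\omega\subset\Mcal_1$ and Theorem \ref{teo-concentration}.
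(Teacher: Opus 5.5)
Your proposal is correct and follows the paper's route. The paper's proof is the comparison $\max_{\Mcal_\omega}J_d\le\max_{\Mcal_1}J_d=J_d(\chi_{(0,M)})$ between the restricted class and the class with $\overline{m}\equiv1$, combined with Theorem~\ref{teo-concentration}; it states this in one line and says only that the same argument gives the remaining items. Your extra details are exactly what that terse proof leaves implicit: the reflection $x\mapsto 1-x$ showing both intervals give the same value, the uniqueness argument in (i), and the measure argument via Theorem~\ref{thm:mbang} for (iii). You are also right that (iii) needs $\omega_1,\omega_2$ to be separated: the merely disjoint pieces $(0,0.4)$ and $[0.4,0.8)$ with $M=0.5$ satisfy the hypotheses, yet (i) gives the connected maximizer $\chi_{(0,0.5)}$.
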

\begin{proof}
The first conclusion can be proved by observing that $\chi_{(0,M)}\in \Mcal$ and it holds
\[\max_{m\in \Mcal} J(m)\leq \max_{m\in \Mcal '} J(m)=J(\chi_{(0,M)})\]
for
$
\Mcal':=\{  0\leq m(x)\leq 1, \text{ a.e.  in } \Omega \text{ and }\int_\Omega m(x)\, dx= M\}.$
The same comparison argument can be exploited to show the remaining conclusions.
\end{proof}
 \begin{remark} \label{rem:teoest}
Let us observe that the same result holds assuming  $\overline{m}(x)\equiv\sup \overline{m}=:c$ in 
$(0,\frac{M}c)$ or in  $(\frac{1-M}c,1)$.
\end{remark}
\begin{remark}\label{rmk:dsmall}
Regarding the fragmentation phenomenon observed for $d$ small in \cite{manaprjmpa, mabalet, heokim}, as already mentioned in the introduction, this does not seem to occur in our framework as shown in in Section \ref{se:num_results}  via numerical simulations in the one dimensional case. In Figure \ref{fig:s_and_J_max_1D} (top right hand side) it is represented the behaviour of the cost functional with respect to $d$, for different choices of the growth rate function $s$. In all the cases, 
the numerical evidence shows that $d\mapsto J_{d}(m^{*})$ is decreasing for 
$d\in [10^{-8},1]$. This seems to prevent the fragmentation phenomenon in our
framework, as also highlighted in the graphs of the optimal control included in Figure \ref{fig:s_and_J_max_1D}.

\end{remark}

\section{Numerical results}\label{se:num_results}

In this section we present numerical simulations for the stationary Gompertz control problem. 
Most experiments are performed in one space dimension, with \(\Omega=(0,1)\), while a final example concerns the two-dimensional case \(\Omega=(0,1)^2\). Given a spatial growth rate \(s(x)\), a diffusion coefficient \(d>0\), a carrying capacity \(K>0\), and an admissible control \(m\), the associated state \(u=u_m\) is the unique positive solution to 
\[\begin{cases}
-d u''(x)=s(x)u(x)\ln\left(\frac{K}{u(x)}\right)-m(x)u(x),
& x\in\Omega,
\\
u'(0)=u'(1)=0.
\end{cases}\]

We address both the problems \eqref{eq:defmax} for $m\in \Mcal$ introduced in \eqref{M-set}.

In the full-domain simulations we take \(\overline{m}\equiv 1\), while in the localized-control simulations we impose a restricted admissible region \(\omega\subset\Omega\) by choosing
\[
\overline{m}(x)=\chi_\omega(x).
\]

The numerical experiments below are organized according to the main theoretical results of the previous sections. 
Figures~\ref{fig:gompertz_1D_min}--\ref{fig:gompertz_1D_i_ii} concern the minimization problem and illustrate Theorem~\ref{th:minscost}, Proposition~\ref{pro:minsnocostante}, and the open issues discussed in Remarks~\ref{rem:minlogus}--\ref{rem:min_s_const}. 
Figures~\ref{fig:s_and_J_max_1D}--\ref{fig:m_u_profile_max_1D_centered_rectangle} concern the maximization problem and are compared with the bang-bang property of Theorem~\ref{thm:mbang} and the large-diffusion characterization of Theorem~\ref{teo-concentration}. 
The final two-dimensional experiment illustrates the persistence of the concentration phenomenon beyond the one-dimensional setting.

Throughout this section, when no confusion is possible, we write \(J(\cdot)\) instead of \(J_d(\cdot)\).
\subsection{Numerical implementation}

The problem is discretized by finite differences on a uniform grid of \((0,1)\), using a Neumann discretization of the operator \(-\partial_{xx}\). The integral terms and the mass constraint are approximated by the trapezoidal rule. For each admissible control, the nonlinear stationary Gompertz equation is solved by a damped Newton method, while the reduced gradient is computed through the associated adjoint equation. The resulting finite-dimensional constrained optimization problem is solved by a projected quasi-Newton method of L-BFGS type. At each iteration, the tentative update is projected onto the admissible set defined by the box constraint and the discrete mass constraint. A nonmonotone Armijo line search is used to safeguard the descent or ascent step, depending on whether the minimization or maximization problem is considered (see \cite{Nocedal}).

To improve robustness with respect to possible local optima and to the dependence on the initial guess, the algorithm is run from several admissible initial controls, including warm starts from nearby diffusion values and selected structured initializations. For each value of \(d\), the reported solution corresponds to the best converged run among the tested initializations. Convergence is monitored through the projected gradient, or projected ascent, norm, together with the state residual and a posteriori optimality diagnostics based on the Karush--Kuhn--Tucker conditions.

For the sake of brevity, Algorithm~\ref{alg:minimization} reports only the procedure used for
the one-dimensional minimization problem. The maximization problem and the localized-control
variants are treated analogously, using the same finite-difference discretization, adjoint-based
gradient computation, and projection onto the corresponding admissible set.

\begin{algorithm}[H]
\caption{Numerical computation of a minimizer of \(J(m)\)}
\label{alg:minimization}
\begin{algorithmic}[1]
\Require Grid points \(x_i\), quadrature weights \(w_i\), diffusion coefficient \(d\), carrying capacity \(K\), growth profile \(s_i=s(x_i)\), budget \(M\), admissible region \(\omega\) if prescribed
\Ensure Approximate optimal control \(m_*\), state \(u_*\), adjoint state \(p_*\), and value \(J(m_*)\)

\State Construct the finite-difference matrix \(L_h\) associated with \(-\partial_{xx}\) and homogeneous Neumann boundary conditions.
\State Choose an admissible initial control \(m^0\) satisfying
\[
0\leq m_i^0\leq 1,\qquad \sum_i w_i m_i^0=M,
\]
and, in the localized case, \(m_i^0=0\) for \(x_i\notin\omega\).
\State Choose an initial state \(u^0\), using either the local approximation
\[
u_i^0=K\exp\left(-\frac{m_i^0}{s_i}\right)
\]
or a nearly constant approximation in the large-diffusion regime.

\For{\(k=0,1,2,\ldots\)}
\State Solve, by a damped Newton method, the discrete nonlinear state equation
\[
d(L_hu^k)_i
-
s_i u_i^k \ln\left(\frac{K}{u_i^k}\right)
+
m_i^k u_i^k
=0,
\qquad i=1,\ldots,N .
\]

    \State Compute the discretized cost  functional
    \[
    J(m^k)=\sum_i w_i u_i^k .
    \]

    \State Solve the discrete adjoint equation associated with the linearized state equation.

    \State Compute the reduced gradient \(g^k\) from the state and adjoint variables.

    \State Compute a projected quasi-Newton descent direction using an L-BFGS approximation.

   \State Perform a nonmonotone Armijo line search. For a trial step size \(\alpha>0\),
project the tentative update \(m^k+\alpha q^k\) onto the discrete admissible set by setting
\[
m_i^{k+1}
=
\min\{1,\max\{0,m_i^k+\alpha q_i^k-\lambda w_i\}\},
\]
where \(\lambda\) is chosen so that
\[
\sum_i w_i m_i^{k+1}=M.
\]
In the localized case, the same projection is performed only for \(x_i\in\omega\), while
\(m_i^{k+1}=0\) for \(x_i\notin\omega\).

    \State Update \(m^{k+1}\), \(u^{k+1}\), and the L-BFGS memory.

    \If{the projected gradient norm, state residual, and variation of \(J\) are below the prescribed tolerances}
        \State \textbf{stop}
    \EndIf
\EndFor

\State Set \(m_*=m^k\), \(u_*=u^k\), \(p_*=p^k\), and \(J(m_*)=J(m^k)\).
\end{algorithmic}
\end{algorithm}

\subsection{One-dimensional Gompertz minimization problem}

We first focus on the  minimization problem without imposing any additional localization constraint on the support of the control, namely $\overline{m}\equiv 1$ in the definition of $\Mcal$. This case provides a baseline for understanding how the diffusion coefficient \(d\) and the spatial growth profile \(s(x)\) influence the optimal reduction of the total population. Throughout this subsection we fix \(K=0.01\), $\overline{m}\equiv 1$ and \(M=0.5\), and we consider diffusion values logarithmically distributed in \([10^{-8},1]\), if not otherwise specified.

As shown in Figure~\ref{fig:gompertz_1D_min}, the qualitative behavior of the one-dimensional Gompertz minimization problem depends on both the diffusion coefficient $d$ and the spatial growth profile $s(x)$. In the left panel, we consider the two growth functions 
\[
s(x)= \frac{5}{18} , \qquad x\in(0,1),
\]
in the constant case, and
\[
s(x)= 
\begin{cases}
\frac{1}{3}, & 0 < x < \frac{1}{3},\\[1mm]
\frac{1}{4}, & \frac{1}{3} \le x < 1,
\end{cases}
\]
in the piecewise case. The right panel shows the optimal value $J(m_*)$ as a function of the diffusion parameter $d$, computed over a logarithmically spaced grid of $100$ values in $[10^{-8},1]$. 
In the constant case the optimal control \(m_*\) is constant and does not depend on \(d\), since we are in the setting of Theorem~\ref{th:minscost}; hence \(J(m_*)=\int_\Omega U_\infty=U_\infty\).  In the piecewise case $J(m_*)$ is smaller for low diffusion and increases monotonically with $d$, tending toward the value of the constant case as diffusion becomes larger as proved in Proposition \ref{pro:dinfinito}. 
This indicates that spatial heterogeneity has a nontrivial effect on the optimal outcome in the weak-diffusion regime, whereas this effect becomes less pronounced when diffusion is stronger.

\begin{figure}[H]
\centering
\includegraphics[width=0.45\textwidth]{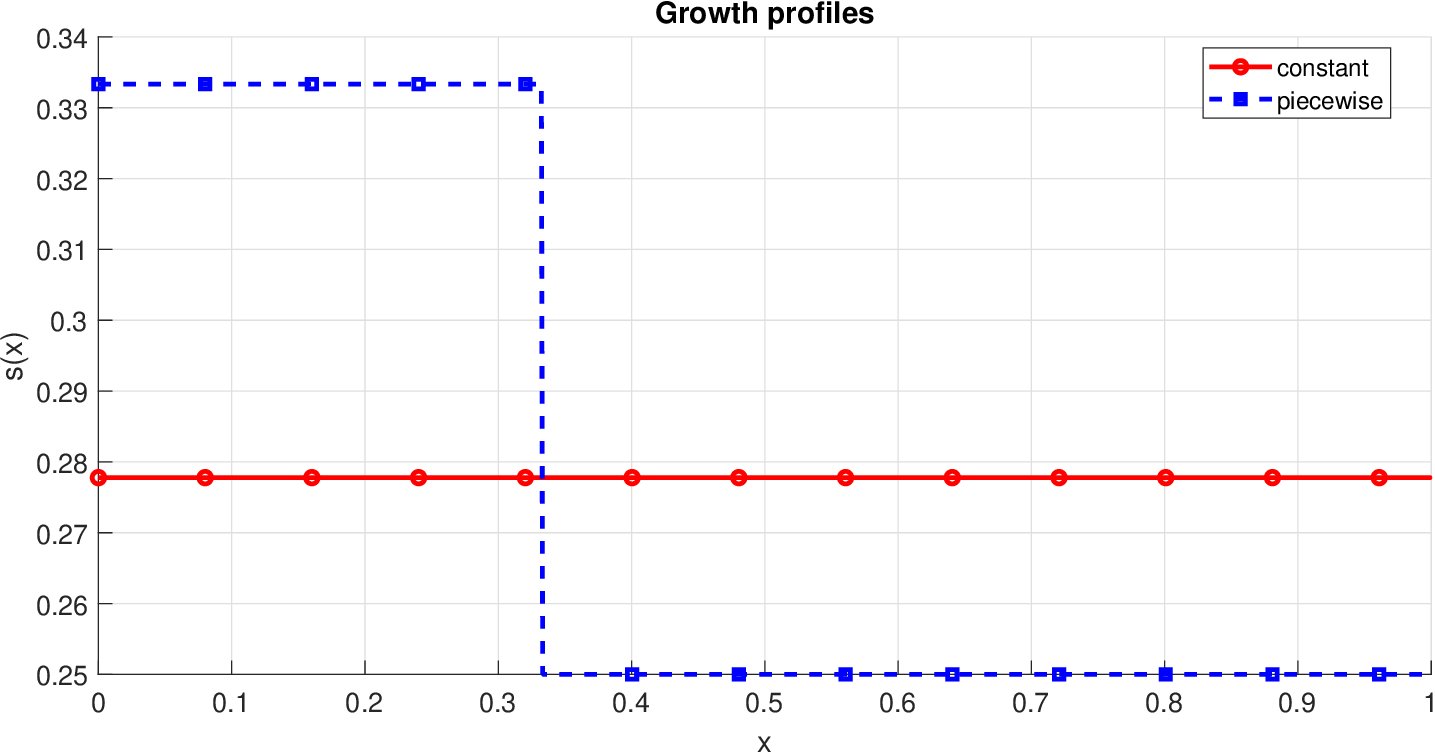} 
\includegraphics[width=0.45\textwidth]{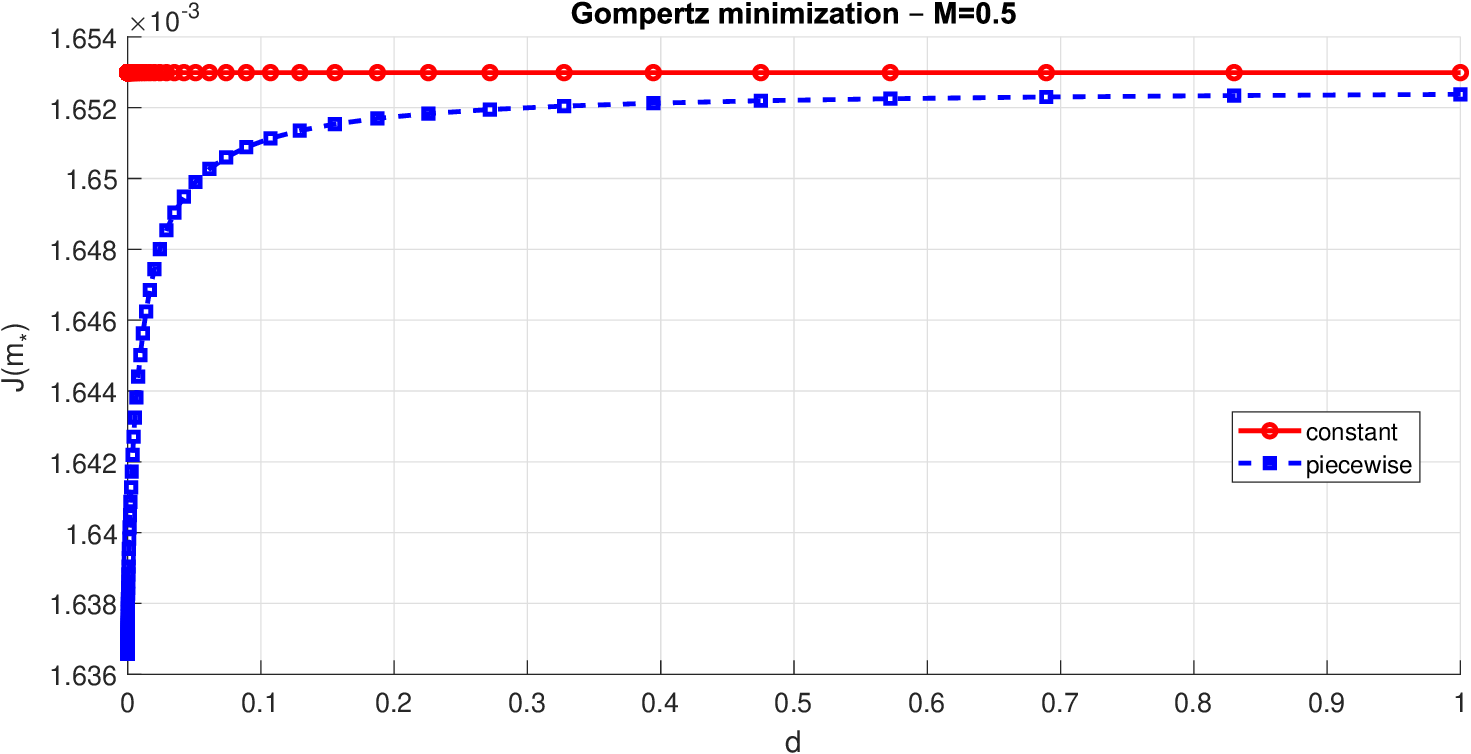} 

\caption{Left: growth profiles $s(x)$ corresponding to the constant and piecewise cases. Right: optimal value $J(m_*)$ as a function of the diffusion parameter $d$, computed for $d \in [10^{-8},1]$ on a logarithmically spaced grid with $100$ points, under the budget constraint $\int_\Omega m(x)\,dx = M$, with $M=0.5$ and $K=0.01$.}
\label{fig:gompertz_1D_min}
\end{figure}
Figure~\ref{fig:gompertz_1D_min_piecewise_heatmaps_profiles} displays the optimal control $m_*$ and the associated optimal state $u_*$ in the piecewise growth setting for $M=0.5$ and $K=0.01$. The heatmap of $m_*(x)$ shows a sharp transition located near the discontinuity point $x=\tfrac13$, separating a region where the control is larger on the interval $(0,\tfrac13)$ from a region where it is smaller on $[\tfrac13,1 )$. Moreover, this interface becomes increasingly steep as the diffusion coefficient $d$ decreases, indicating a stronger localization of the optimal strategy in the low-diffusion regime. The heatmap of $u_*(x)$ exhibits the complementary behavior: the state is higher on the left subinterval, where the growth rate is larger, and lower on the right subinterval, with a transition that again sharpens as $d \to 0$. The representative profiles confirm this trend, showing that for small diffusion both $m_*$ and $u_*$ approach piecewise-structured configurations aligned with the discontinuity of the growth profile, whereas for larger values of $d$ the profiles become smoother due to the homogenizing effect of diffusion.

\begin{figure}[H]
\centering
\includegraphics[width=0.45\textwidth]{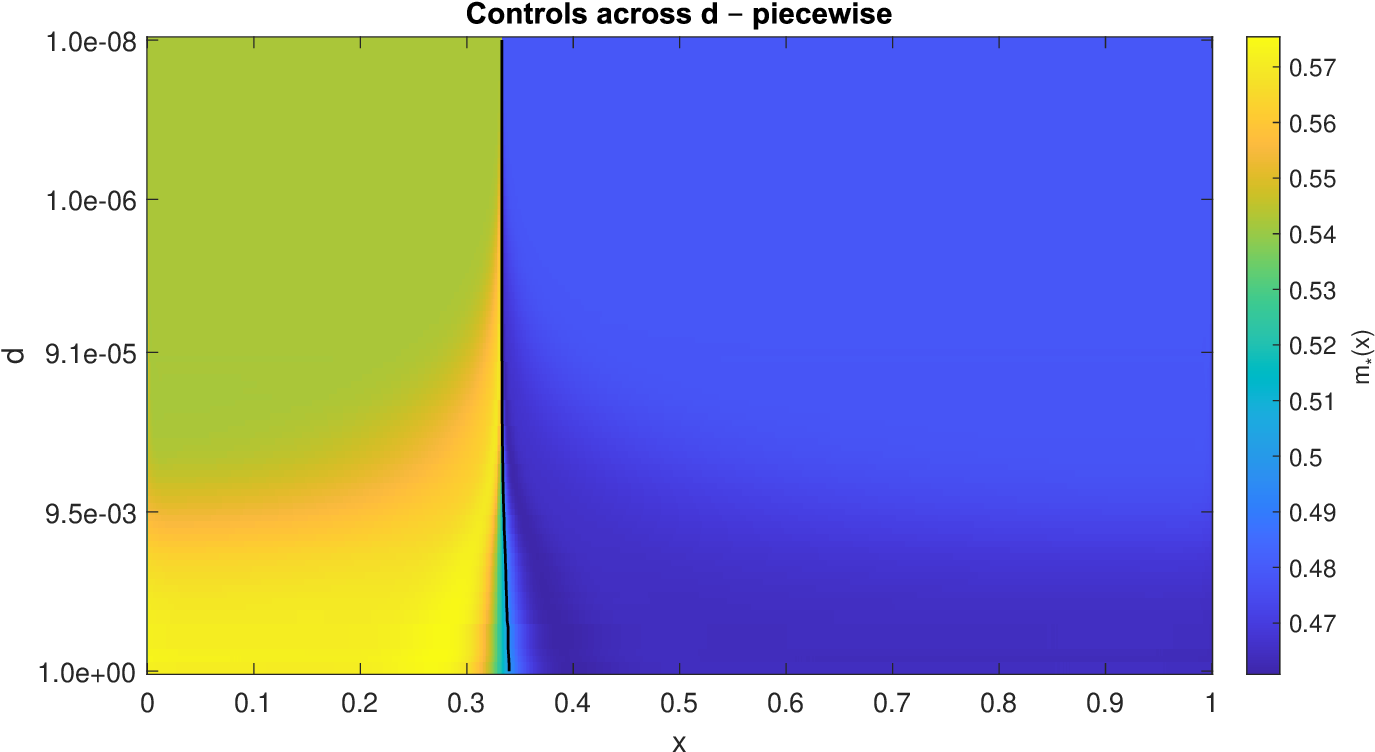}
\includegraphics[width=0.45\textwidth]{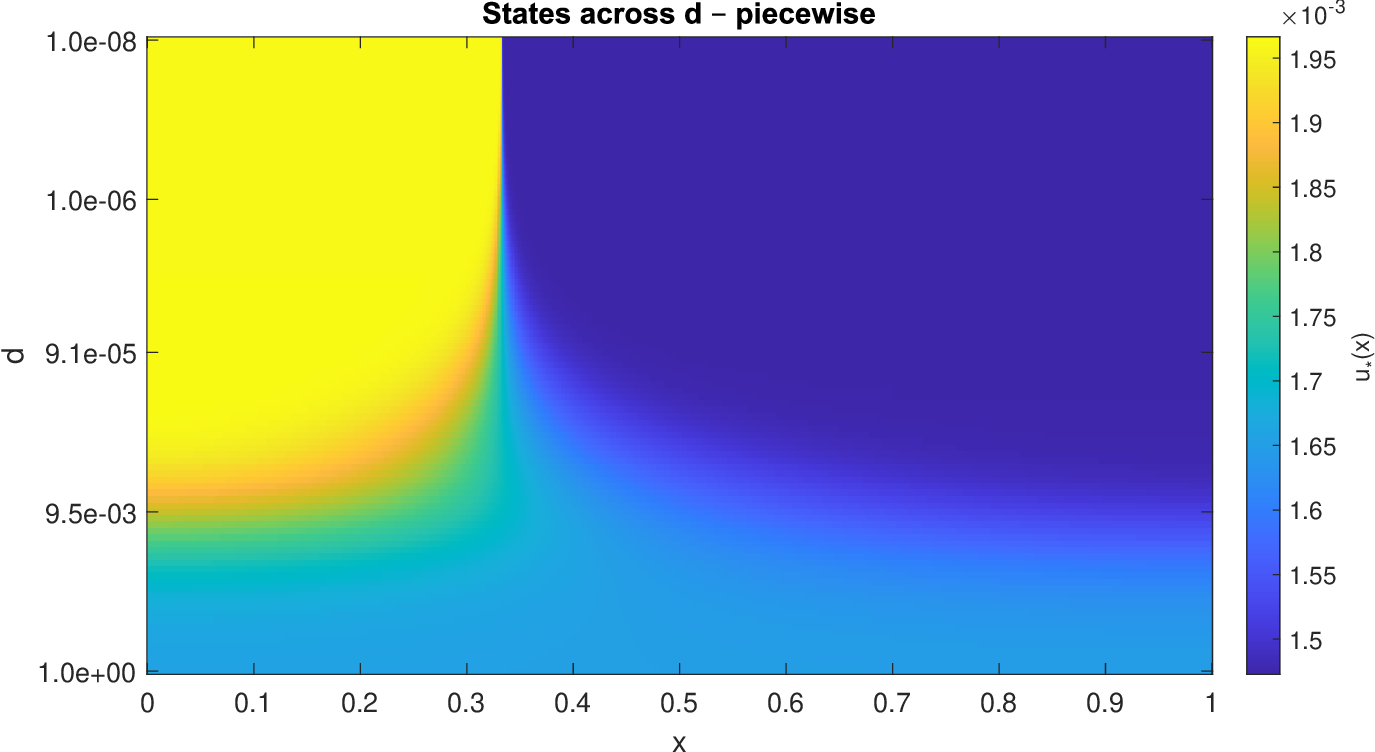}

\includegraphics[width=0.45\textwidth]{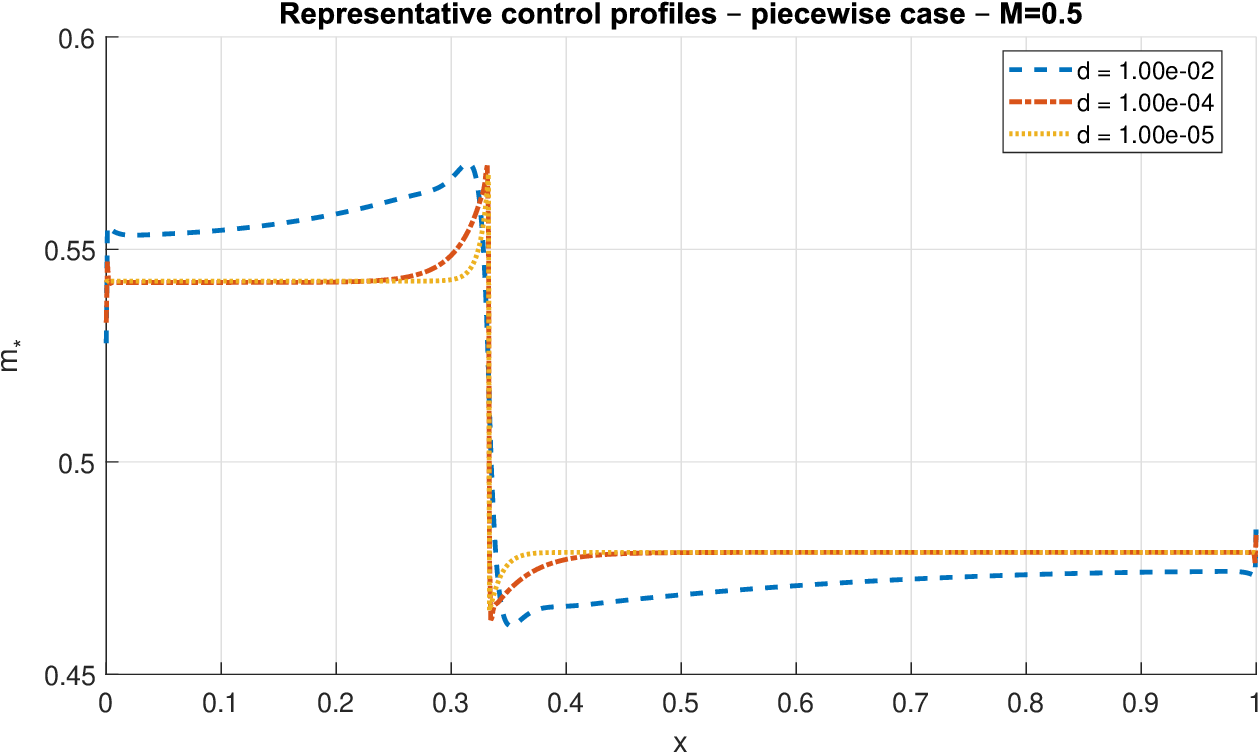}
\includegraphics[width=0.45\textwidth]{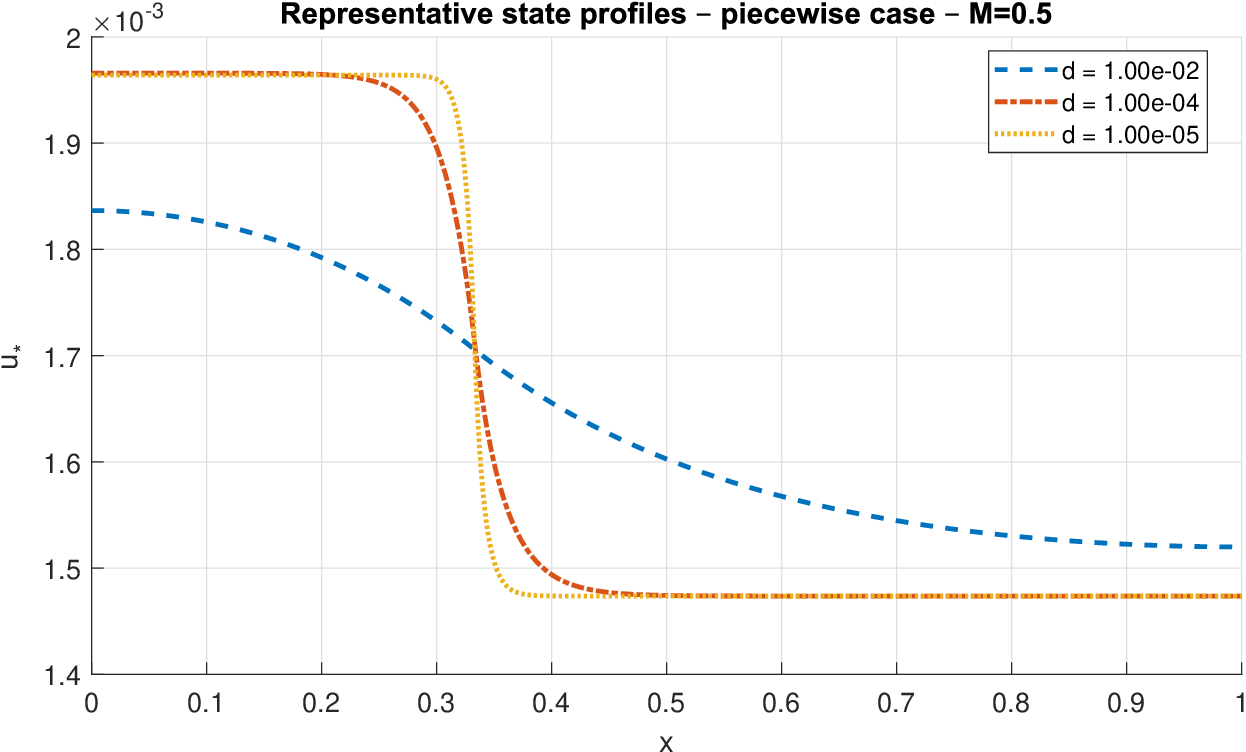}

\caption{Piecewise growth case with budget $M=0.5$ and $K=0.01$. Top left: distribution of the optimal control $m_*(x)$ over $(x,d)$; top right: distribution of the corresponding optimal state $u_*(x)$ over $(x,d)$. Bottom left: selected cross-sections of $m_*(x)$ for representative values of the diffusion parameter $d$; bottom right: the associated profiles of $u_*(x)$. The figure shows the dependence of both the optimal strategy and the resulting state on the diffusion intensity.}

\label{fig:gompertz_1D_min_piecewise_heatmaps_profiles}
\end{figure}

The previous figures describe the behavior of the minimization problem when the control is admissible on the whole interval, namely $\overline{m}\equiv 1$. We now complement this analysis by considering a localized admissible set, where the control is allowed to act only on a prescribed subinterval \(\omega\), namely $\overline{m}(x)=\chi_\omega(x)$.  This setting is useful to visualize how the mass constraint interacts with a restricted control region and how the corresponding first-order optimality conditions are reflected in the switching function $\varphi$, see \eqref{eq:switch} for the definition of $\varphi$.

Figure~\ref{fig:gompertz_1D_i_ii} illustrates the optimal profiles in the constant-growth case for two choices of the control budget  M, with the same admissible region
\(\omega=\left(\tfrac12,\tfrac34\right)\). As expected, the constraint \(m=0\) in \((0,1)\setminus\omega\) confines the optimal control to \(\omega\). Increasing the budget from \(M=0.03\) to \(M=0.2\) enlarges the portion of the admissible interval on which the control is active. The associated states \(u_*\) display the corresponding spatial variation induced by the optimal control. The last row reports the switching function \(\varphi=u_* p_*\), whose behavior characterizes the active and inactive regions of the optimal control through the first-order optimality conditions.

\begin{figure}[H]
\centering

\includegraphics[width=0.45\textwidth]{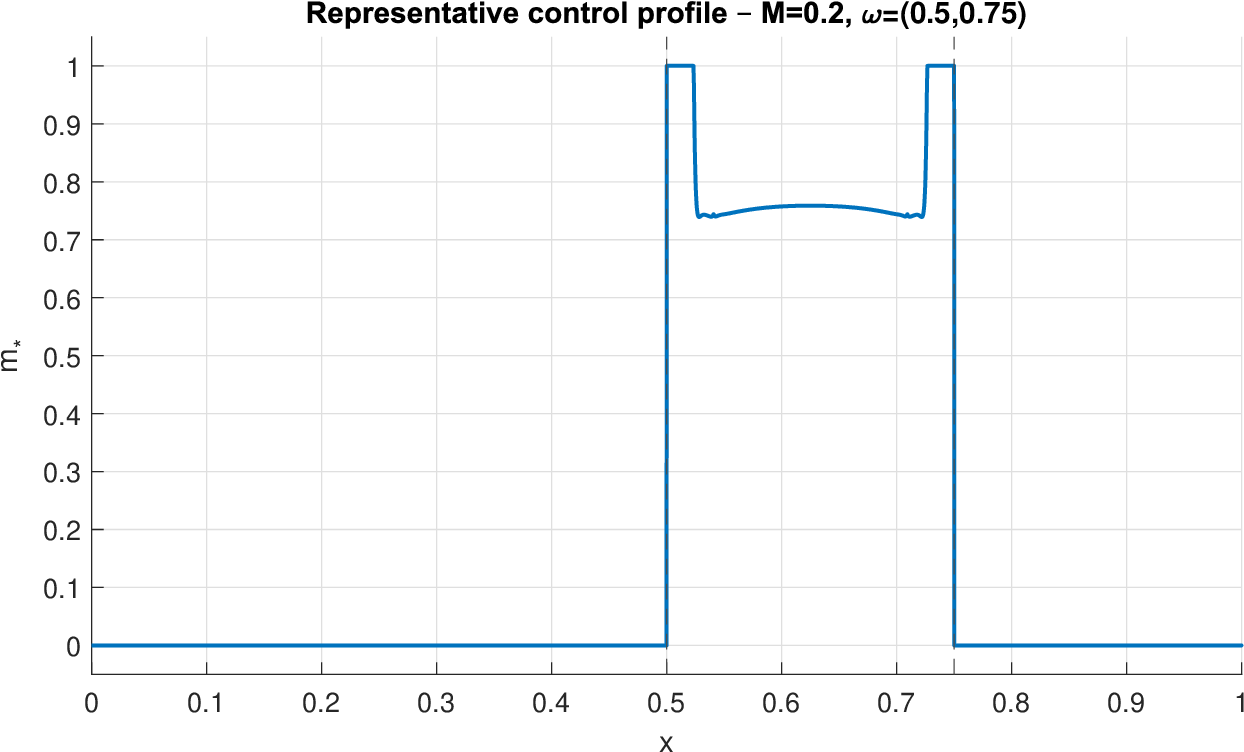}
\includegraphics[width=0.45\textwidth]{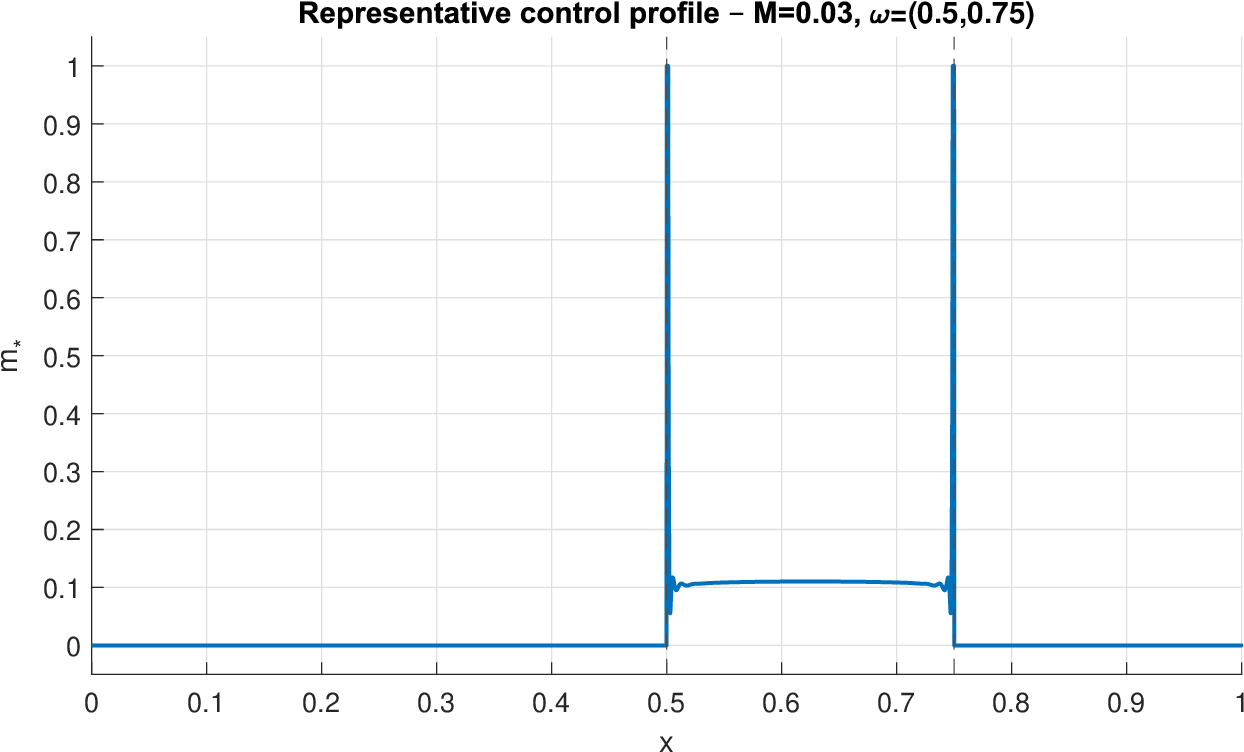}

\includegraphics[width=0.45\textwidth]{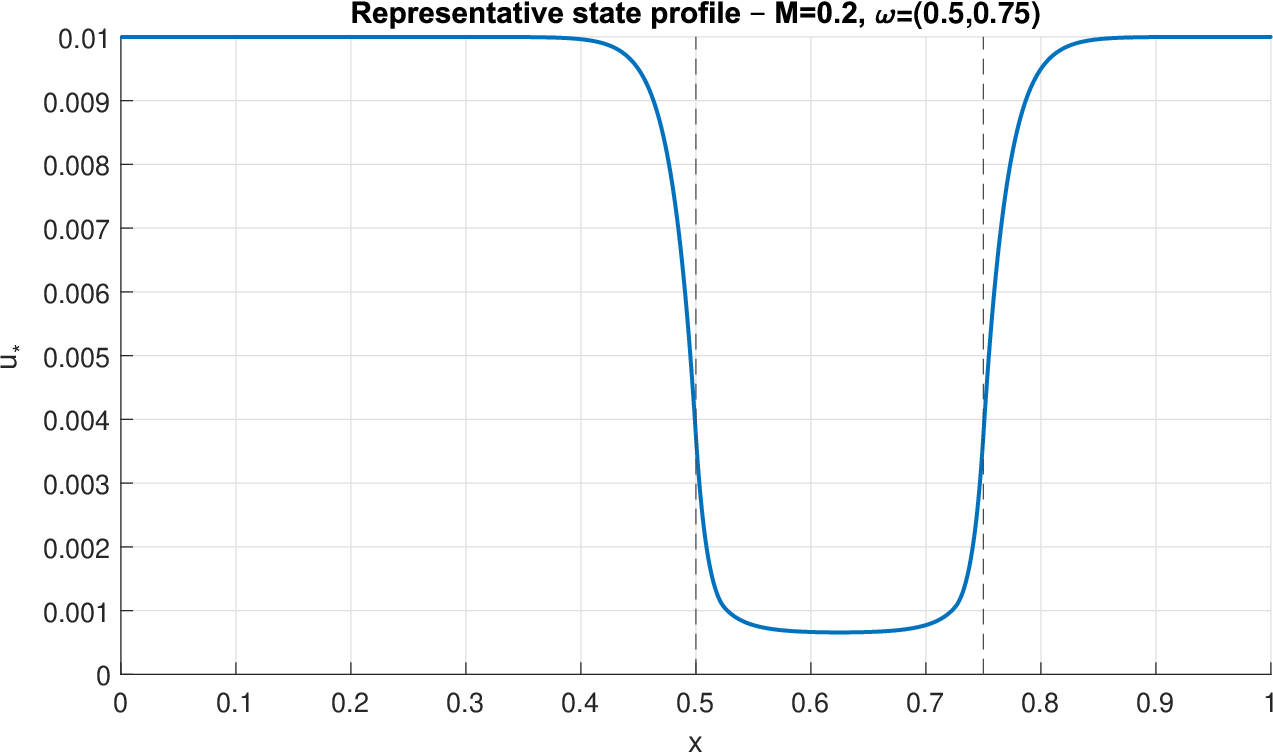}
\includegraphics[width=0.45\textwidth]{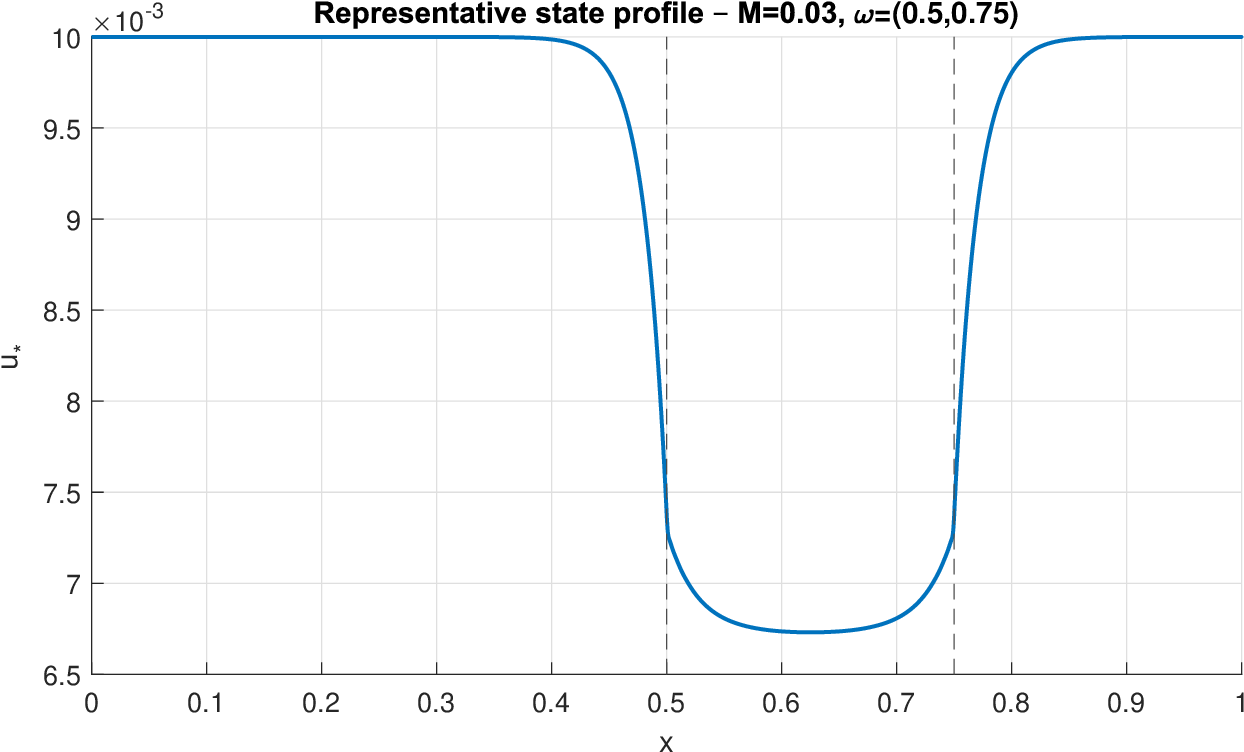}

\includegraphics[width=0.45\textwidth]{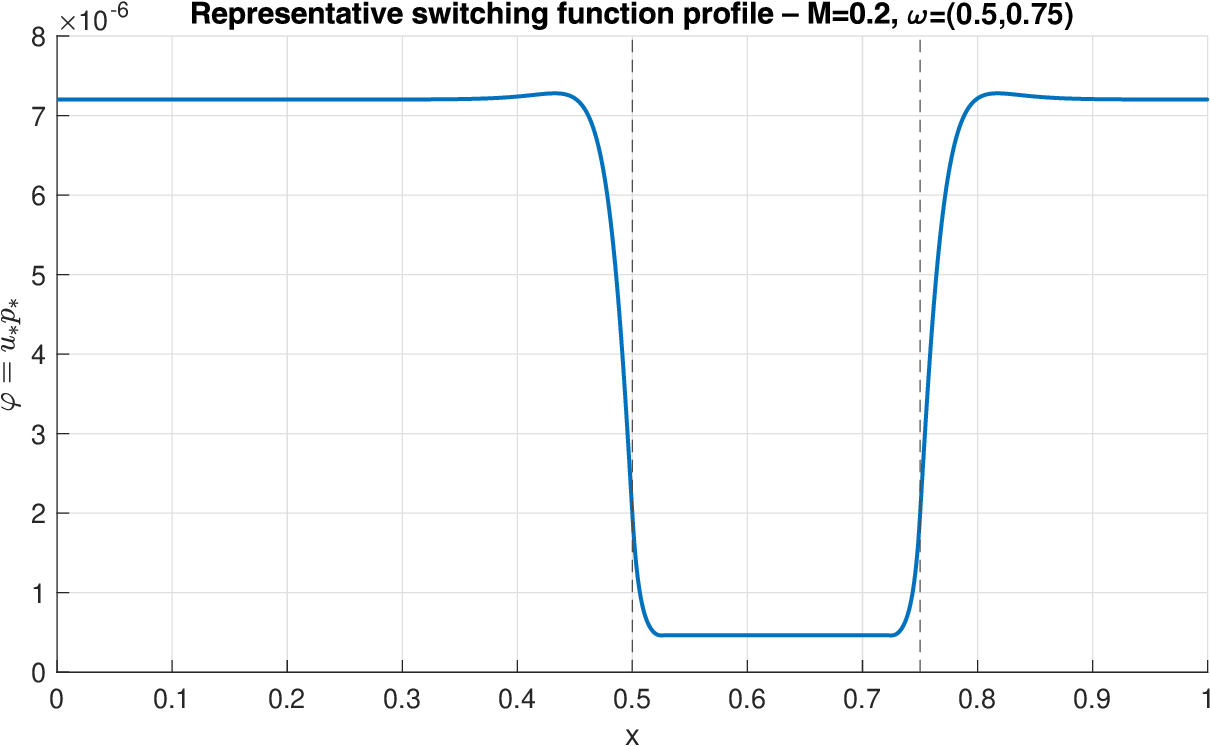}
\includegraphics[width=0.45\textwidth]{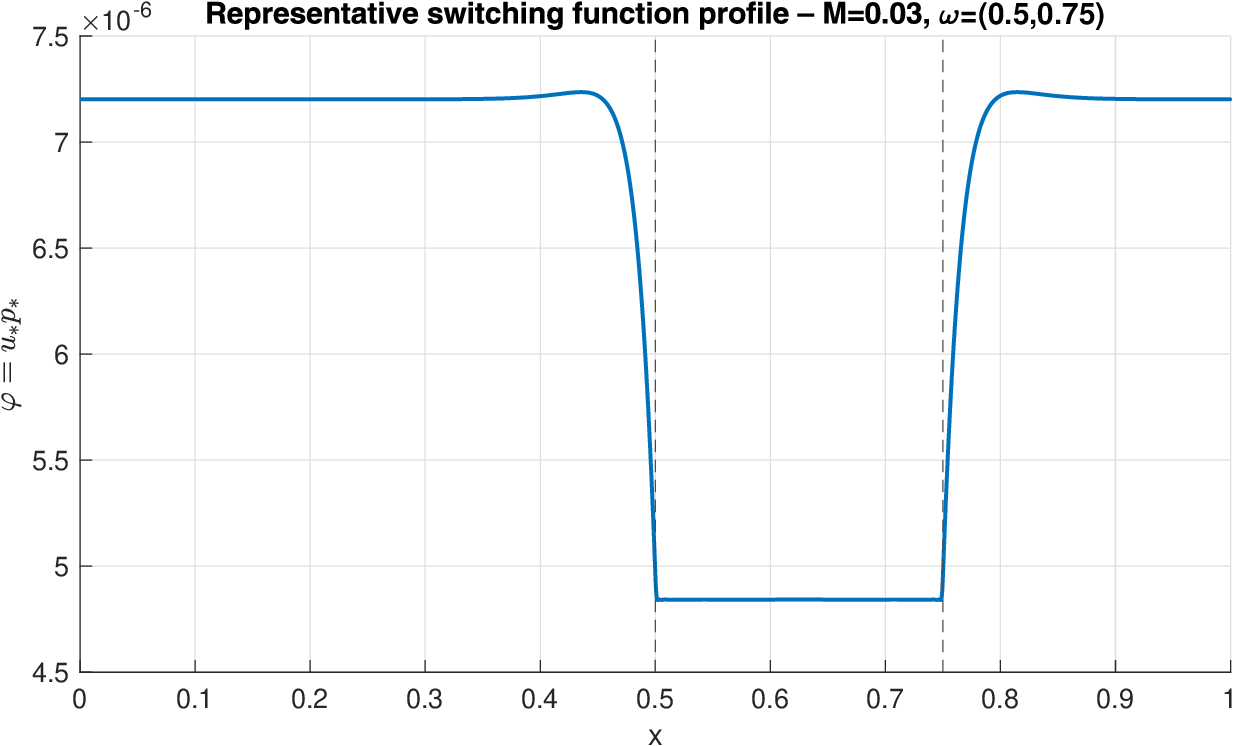}
 
\caption{Profiles associated with the optimal solution in the constant-growth case, for $d=10^{-4}$ and $K=0.01$, under the localized control constraint $m=0$ in $(0,1)\setminus \omega$ with $\omega=\left(\tfrac12,\tfrac34\right)$. The left column corresponds to case (i) $M=0.2$, while the right column corresponds to case (ii) $M=0.03$. The first row shows the optimal control $m_*(x)$, the second row the associated state $u_*(x)$, the third row the switching function \(\varphi(x)=u_*(x)p_*(x)\). The dashed vertical lines mark the boundary of the admissible control region $\omega$.}
\label{fig:gompertz_1D_i_ii}
\end{figure}

These one-dimensional minimization experiments show that the optimal strategy is strongly influenced by the diffusion intensity, the spatial heterogeneity of the growth rate, and the localization of the admissible control region. We now turn to the corresponding maximization problem, where the same structural features are examined under the opposite optimization criterion.

\subsection{Gompertz maximization problem}

Now the goal  is  to maximize the total population size
\[
J(m)=\int_\Omega u_m(x)\,dx
\]
under the same box and mass constraints on the control, namely $m\in \Mcal$.  This provides a complementary perspective on the role of diffusion and spatial heterogeneity: instead of identifying control distributions that reduce the state, we look for admissible configurations that enhance the total population. As before, we fix \(K=0.01\) and \(M=0.5\), and we vary the diffusion coefficient \(d\) in the range \([10^{-8},1]\).

Figure~\ref{fig:s_and_J_max_1D} summarizes the dependence of the one-dimensional
maximization problem on the spatial growth profile and on the diffusion coefficient. The top
left panel shows the growth functions \(s(x)\), while the top right panel reports the
corresponding optimal value \(J(m^*)\) as a function of \(d\). The optimal value is observed to be monotone increasing with respect to \(d\) for all the
considered growth profiles. The bottom panels compare representative optimal controls
\(m^*(x)\) and associated states \(u^*(x)\) at \(d=10^{-4}\). These profiles illustrate that the
bang-bang structure of the maximizer is preserved across the tested growth laws, while the
corresponding state reflects the spatial heterogeneity of \(s(x)\).

\begin{figure}[H]
\centering
\includegraphics[width=0.45\textwidth]{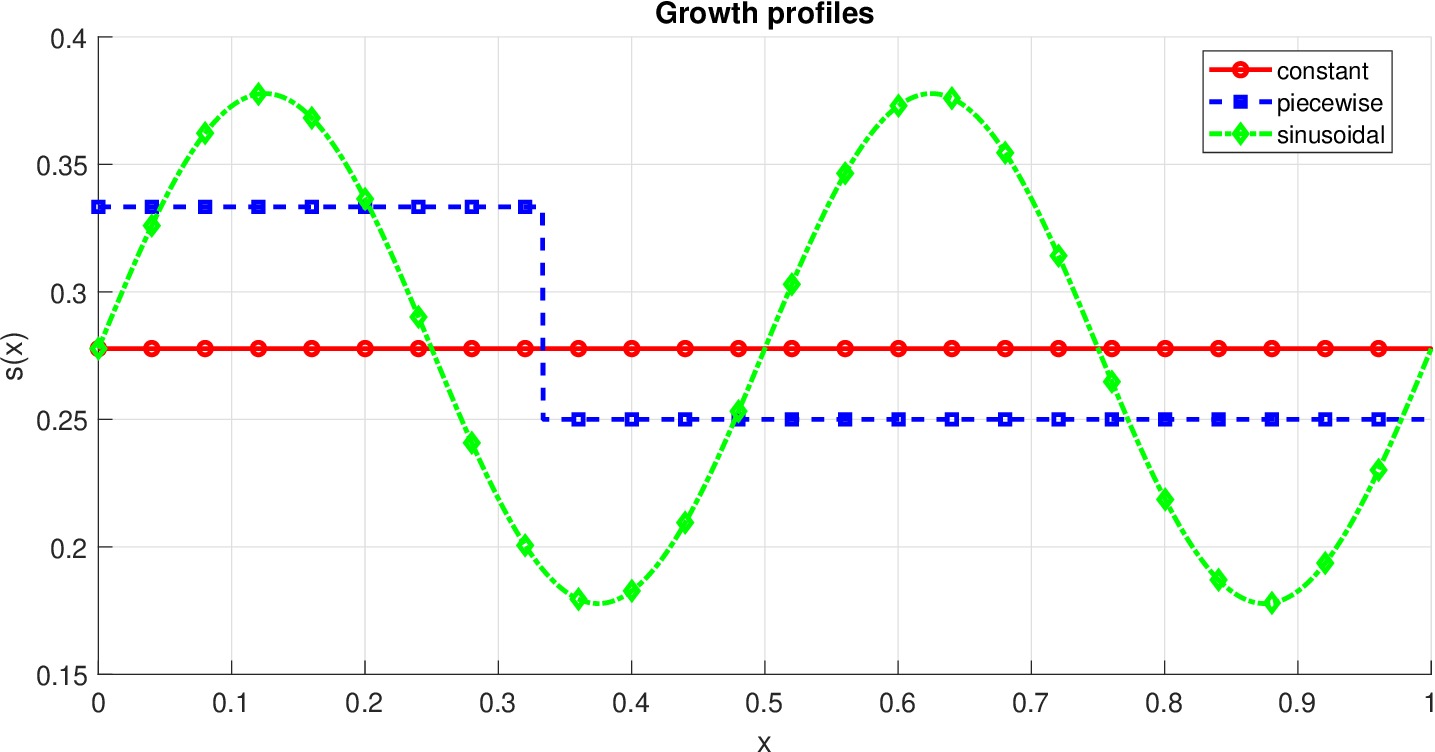} 
\includegraphics[width=0.45\textwidth]{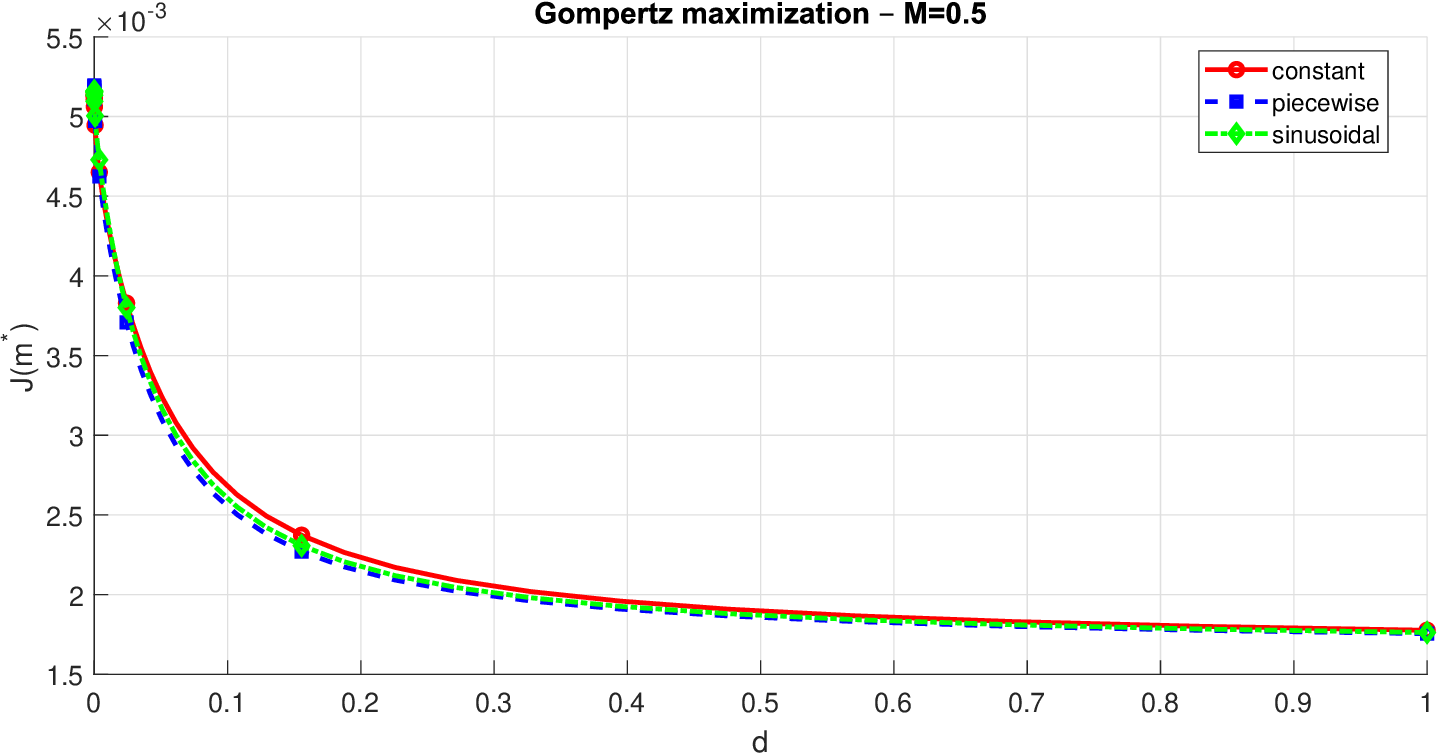}
\includegraphics[width=0.45\textwidth]{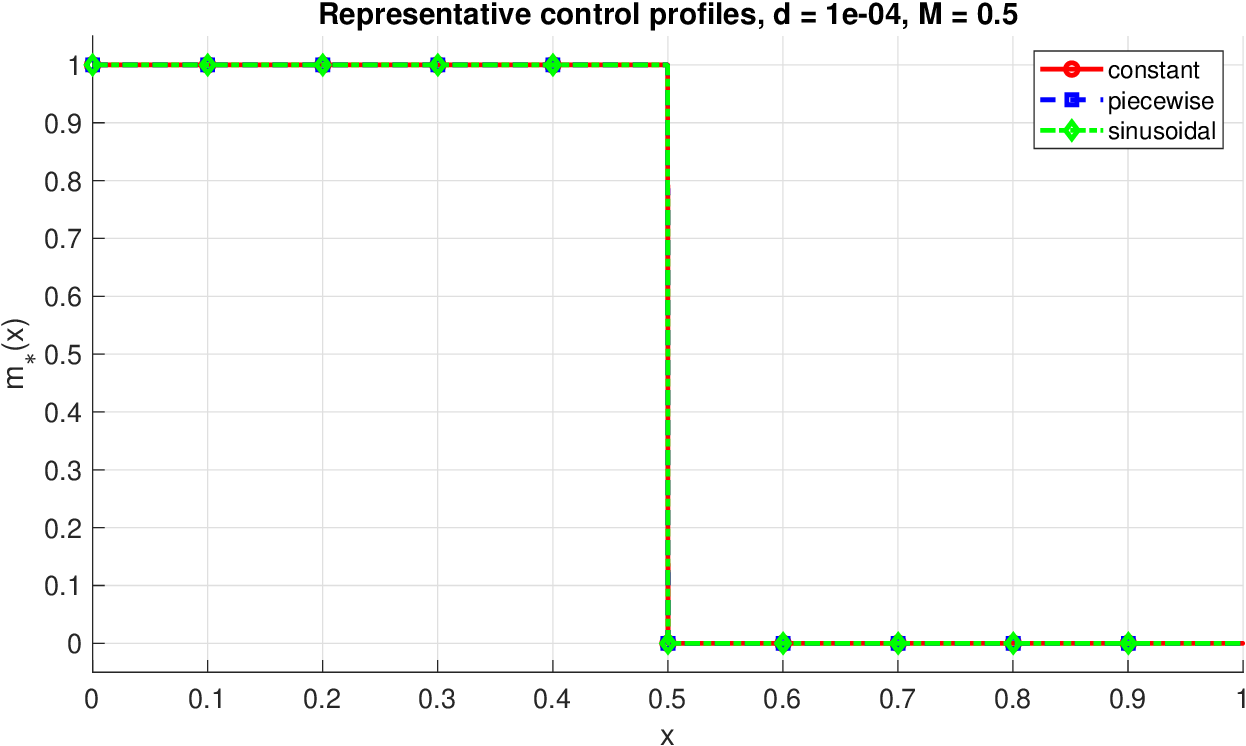}
\includegraphics[width=0.45\textwidth]{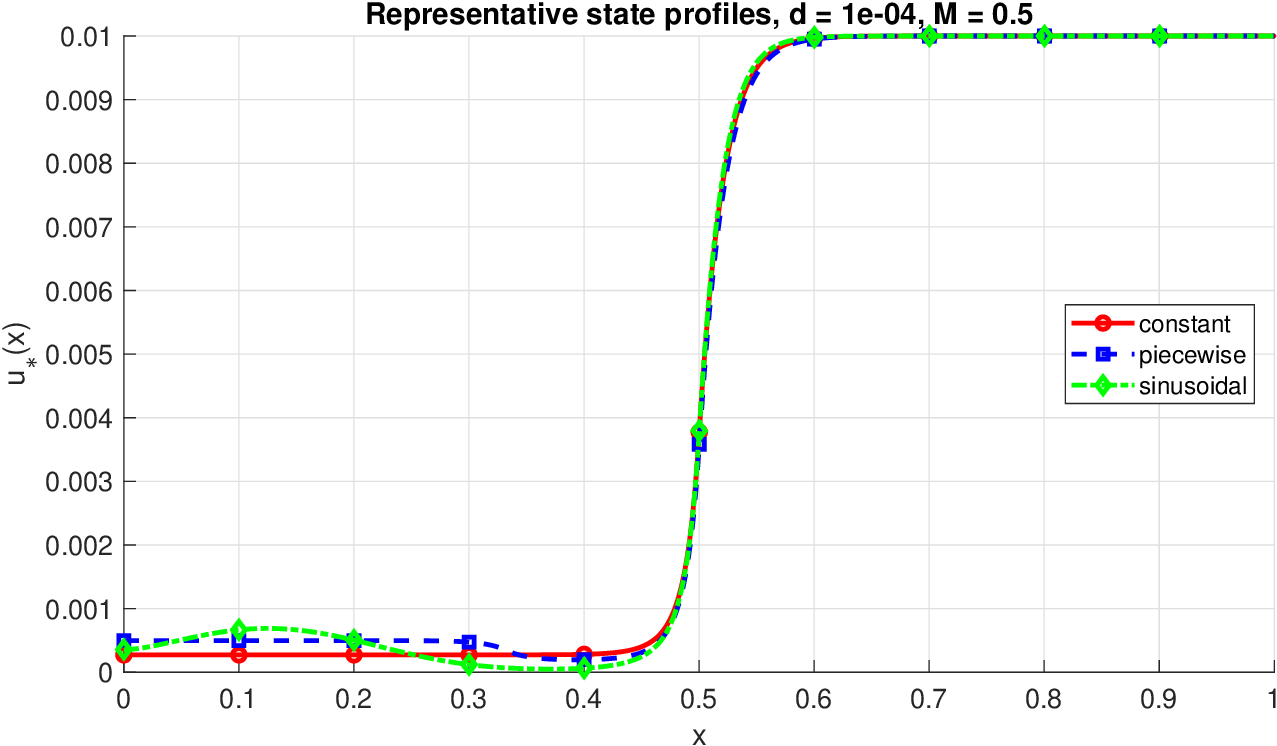}

\caption{One-dimensional stationary Gompertz maximization problem. Top left: spatial growth profiles \(s(x)\) used in the simulations. Top right: optimal value \(J(m^*)\) as a function of the diffusion coefficient \(d\); the horizontal axis is shown on a logarithmic scale for \(d\in[10^{-8},1]\). Bottom left: representative optimal control profiles \(m^*(x)\) at \(d=10^{-4}\). Bottom right: corresponding optimal state profiles \(u^*(x)\) for the considered growth profiles at \(d=10^{-4}\).  The parameters are fixed to \(K=0.01\), \(M=0.5\).}
\label{fig:s_and_J_max_1D}
\end{figure}

Figure~\ref{fig:m_u_profile_max_1D} illustrates the dependence of the optimal pair \((m^*,u^*)\) on the diffusion coefficient \(d\) in the constant-growth case. The upper panels provide a global view through heatmaps in the \((x,d)\)-plane, whereas the lower panels show representative one-dimensional profiles for selected values of \(d\). For the parameters \(K=0.01\) and \(M=0.5\), the computed optimal controls exhibit a bang-bang configuration supported on a left span of the interval. The associated state profiles reflect the smoothing effect of diffusion: as \(d\) increases, the spatial variation of \(u^*\) becomes less pronounced.

\begin{figure}[H]
\centering

\includegraphics[width=0.45\textwidth]{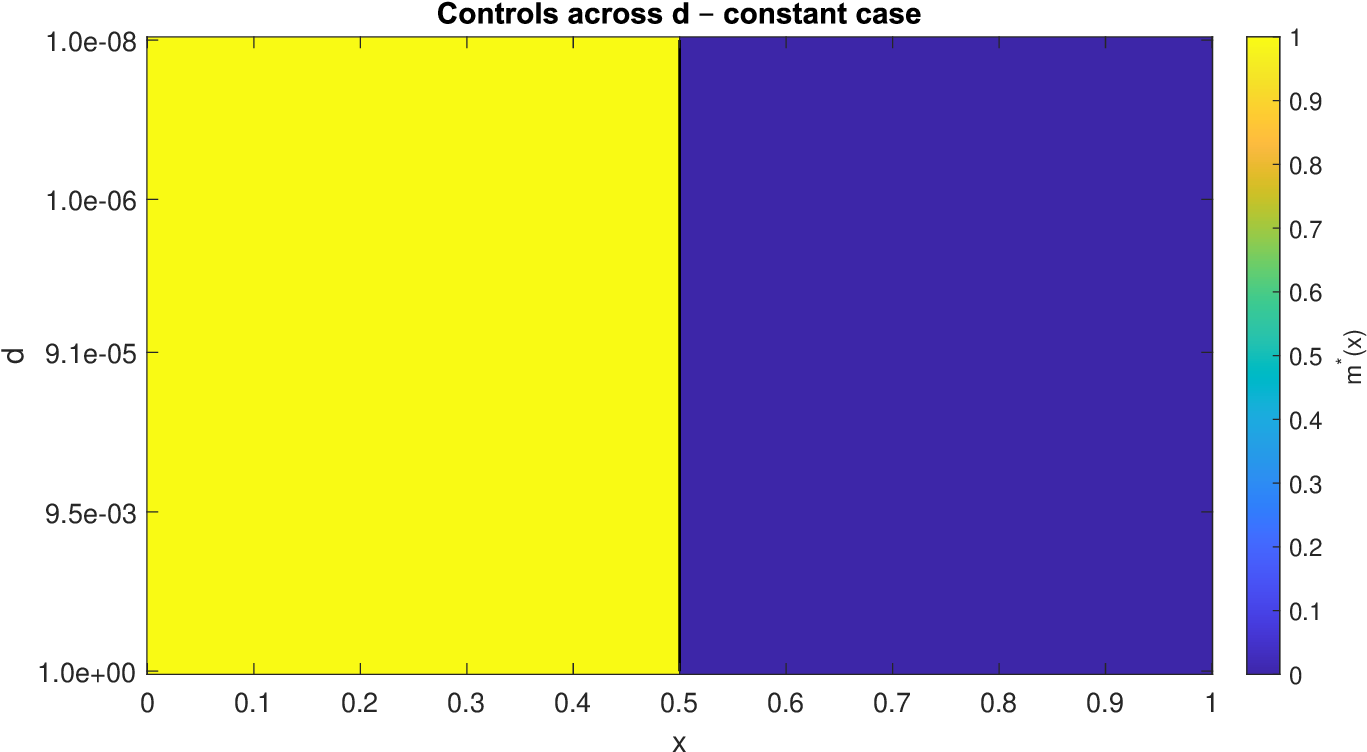} 
\includegraphics[width=0.45\textwidth]{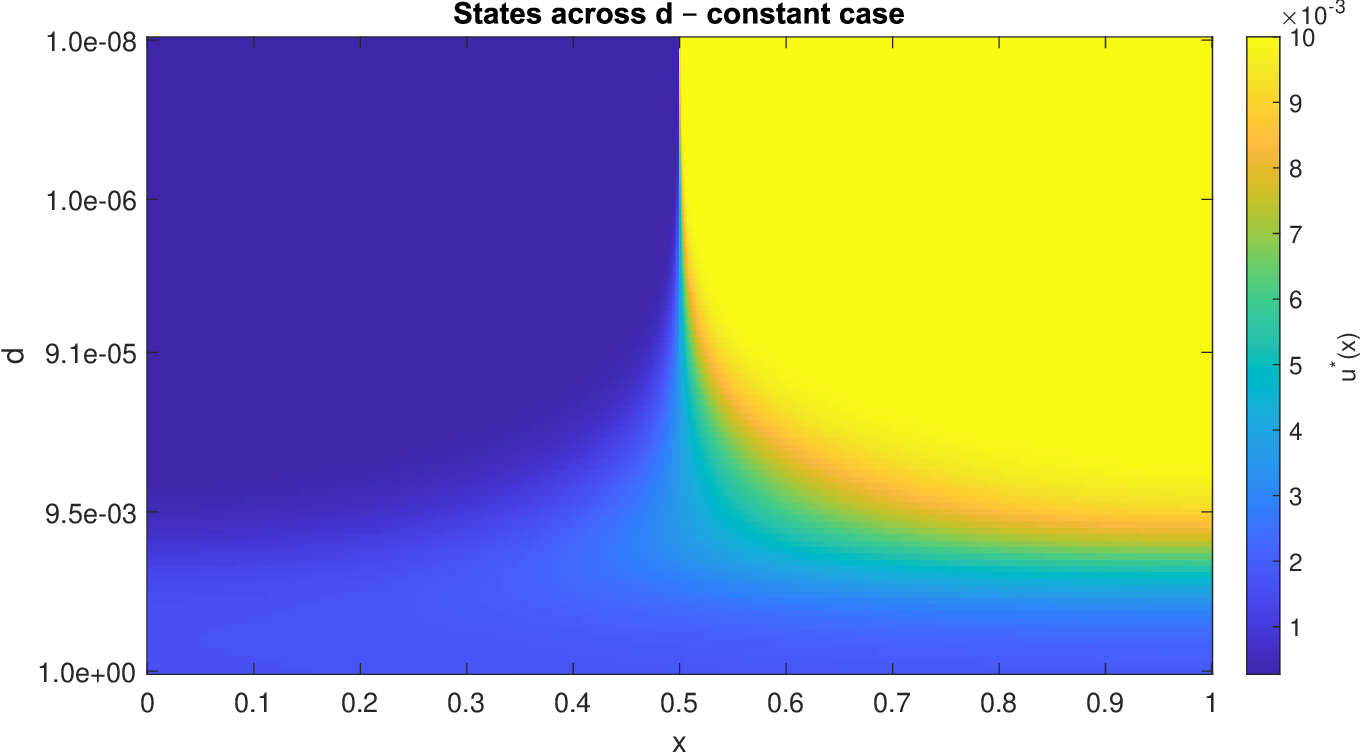}

\includegraphics[width=0.45\textwidth]{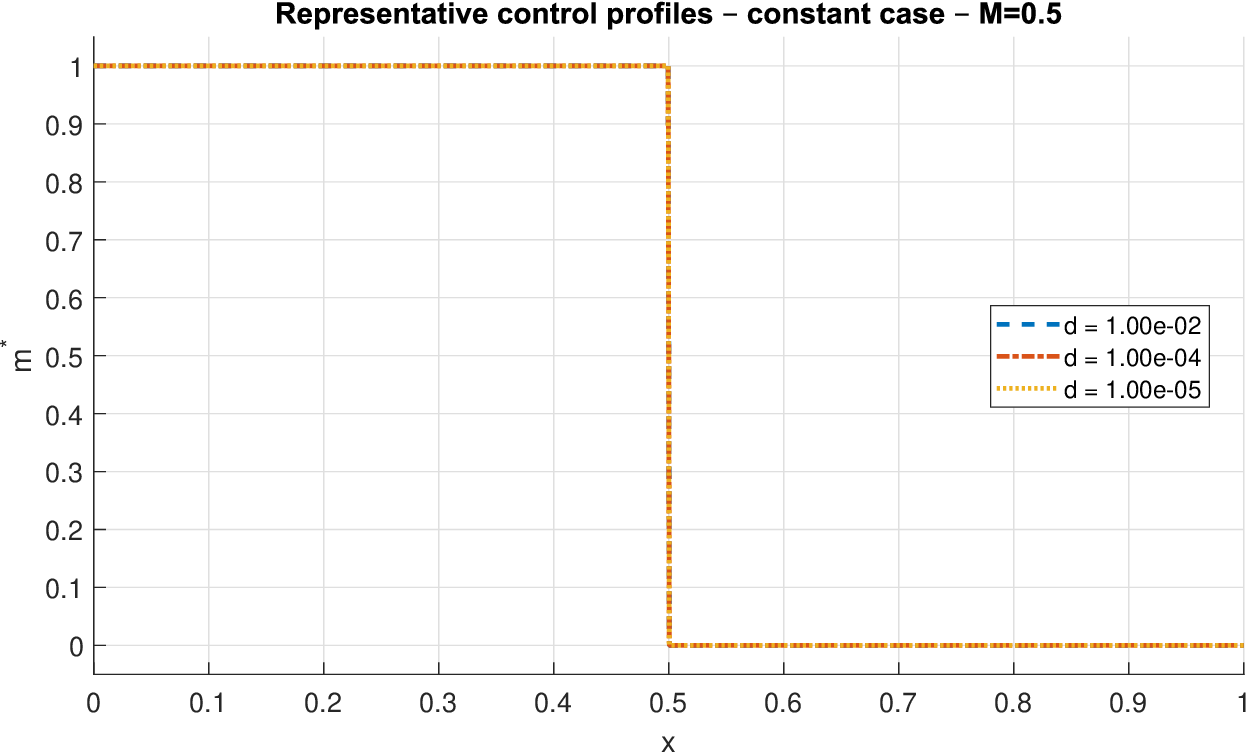} 
\includegraphics[width=0.45\textwidth]{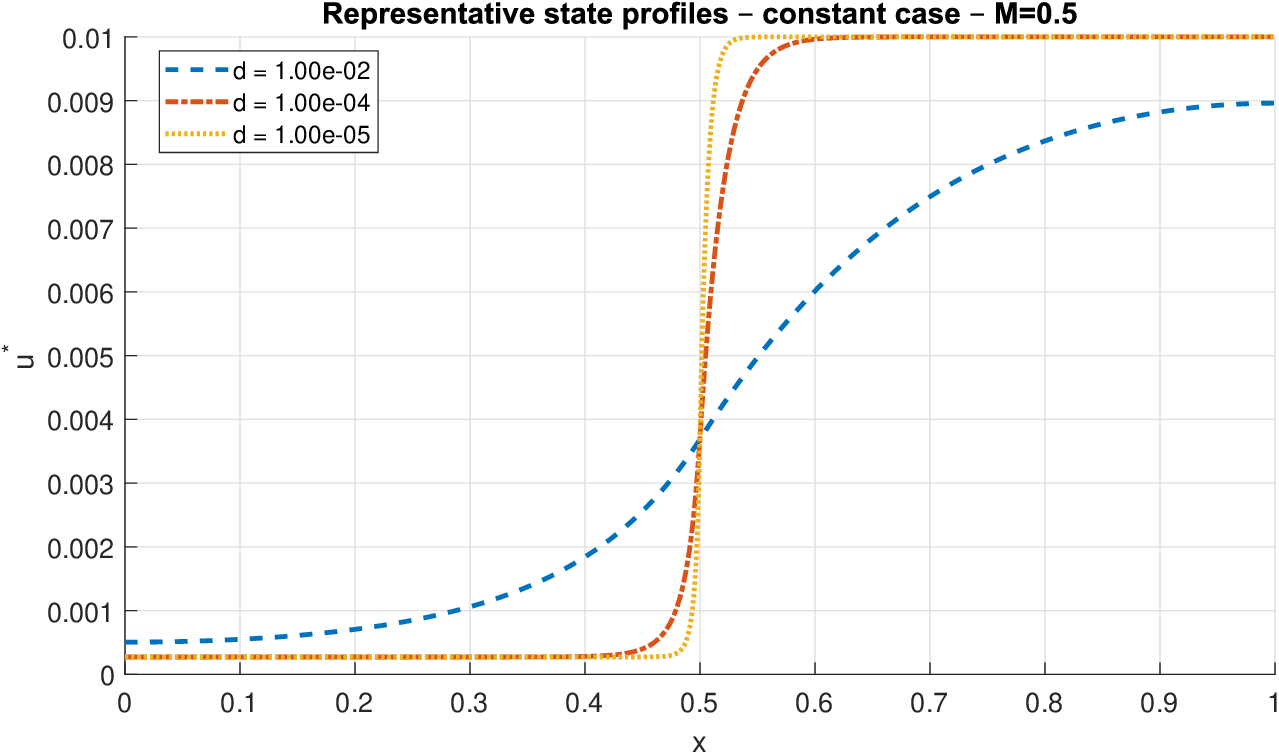}

\caption{Numerical results for the maximization problem in the constant growth case. Top left: heatmap of the optimal control $m^*(x)$ as a function of the spatial variable $x$ and the diffusion coefficient $d$. Top right: heatmap of the corresponding optimal state $u^*(x)$. Bottom left: representative optimal control profiles $m^*(x)$ for selected values of $d$. Bottom right: representative optimal state profiles $u^*(x)$ associated with the corresponding optimal controls. In the examples displayed here, the optimal control preserves a bang-bang structure supported on a left interval of prescribed measure ($M$), while the corresponding state profile varies with the diffusion coefficient ($d$). The parameters are fixed to $K=0.01$, $M=0.5$, and $d\in[10^{-8},1]$.}
\label{fig:m_u_profile_max_1D}
\end{figure}

Figure~\ref{fig:m_u_profile_max_1D_centered_rectangle} reports the numerical results for the maximization problem when the growth function is a centered rectangular profile. 
In this case, \(s(x)\) reaches its maximal value on an interval centered at \(x=1/2\) and takes a smaller constant value near the boundary, while preserving the prescribed spatial average. 
Differently from the previous growth profiles, the location of the active region depends strongly on the diffusion regime. 
For larger values of \(d\), the selected maximizer is concentrated near the boundary, consistently with the boundary-concentration behavior observed in the constant-growth case. 
For smaller values of \(d\), a second bang-bang branch, concentrated in the central high-growth region, gives larger values of the objective. 
For extremely small values of \(d\), the numerical continuation of this central branch becomes delicate; therefore, the apparent return to a boundary-concentrated configuration is not used to draw a definitive conclusion about the limiting small-diffusion regime. 
Overall, the experiment suggests a competition between boundary-concentrated and centrally concentrated bang-bang configurations, with a strong dependence of the active-set location on the diffusion coefficient.
\begin{figure}[H]
\centering

\includegraphics[width=0.45\textwidth]{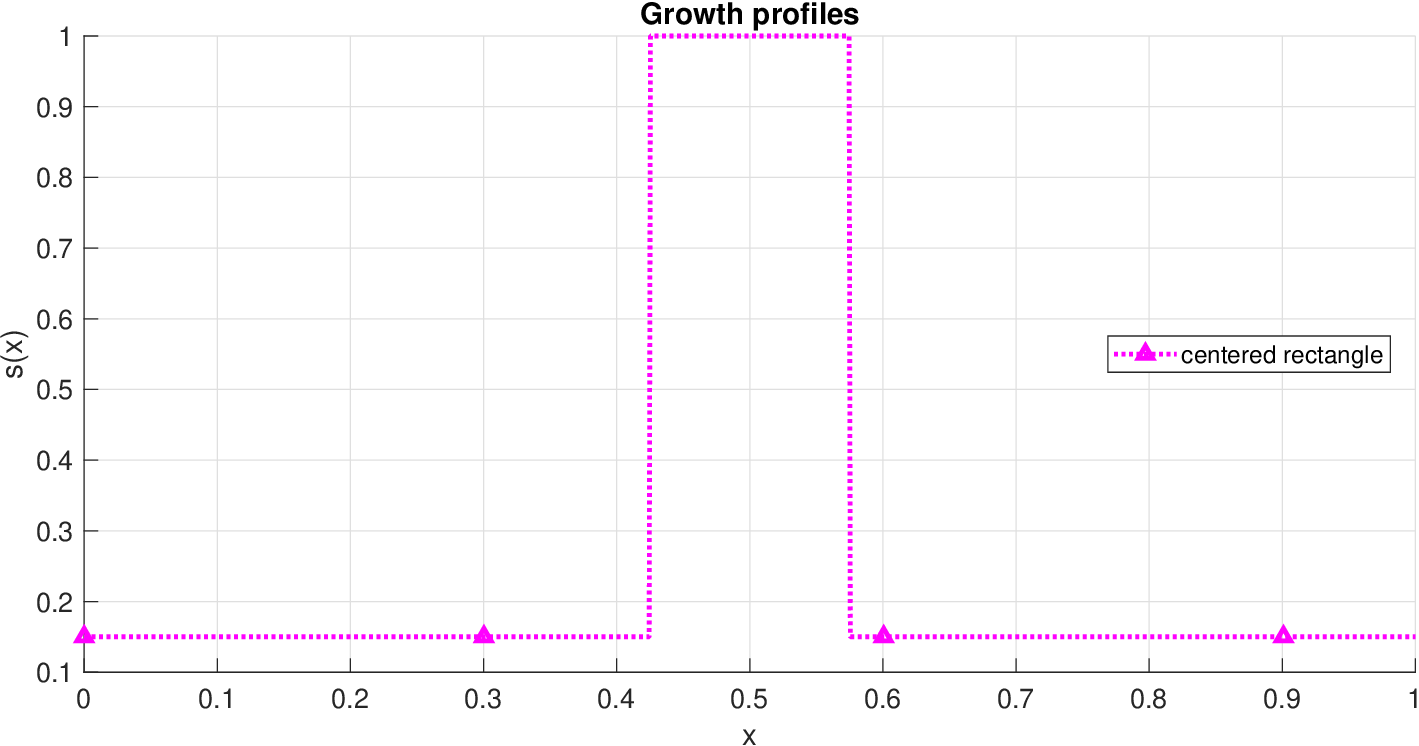} 
\includegraphics[width=0.45\textwidth]{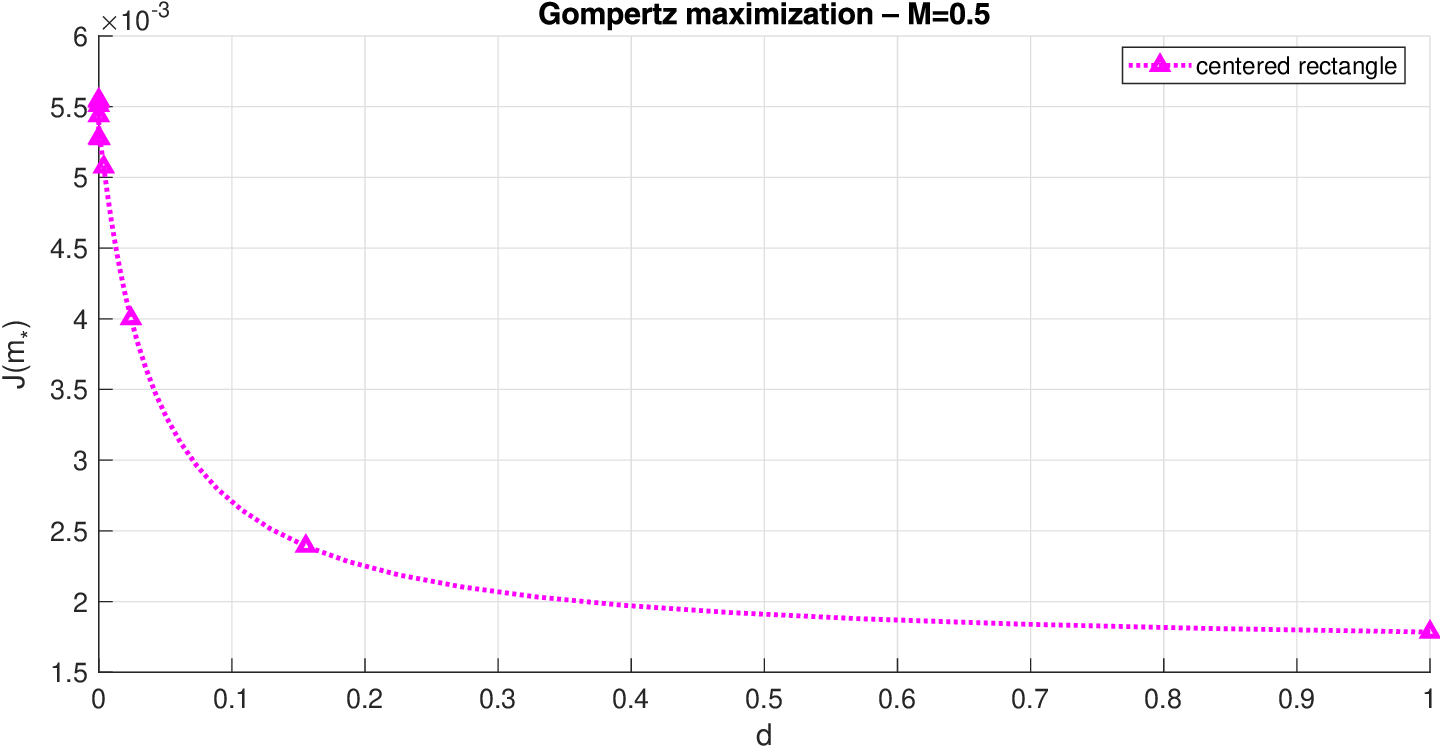}

\includegraphics[width=0.45\textwidth]{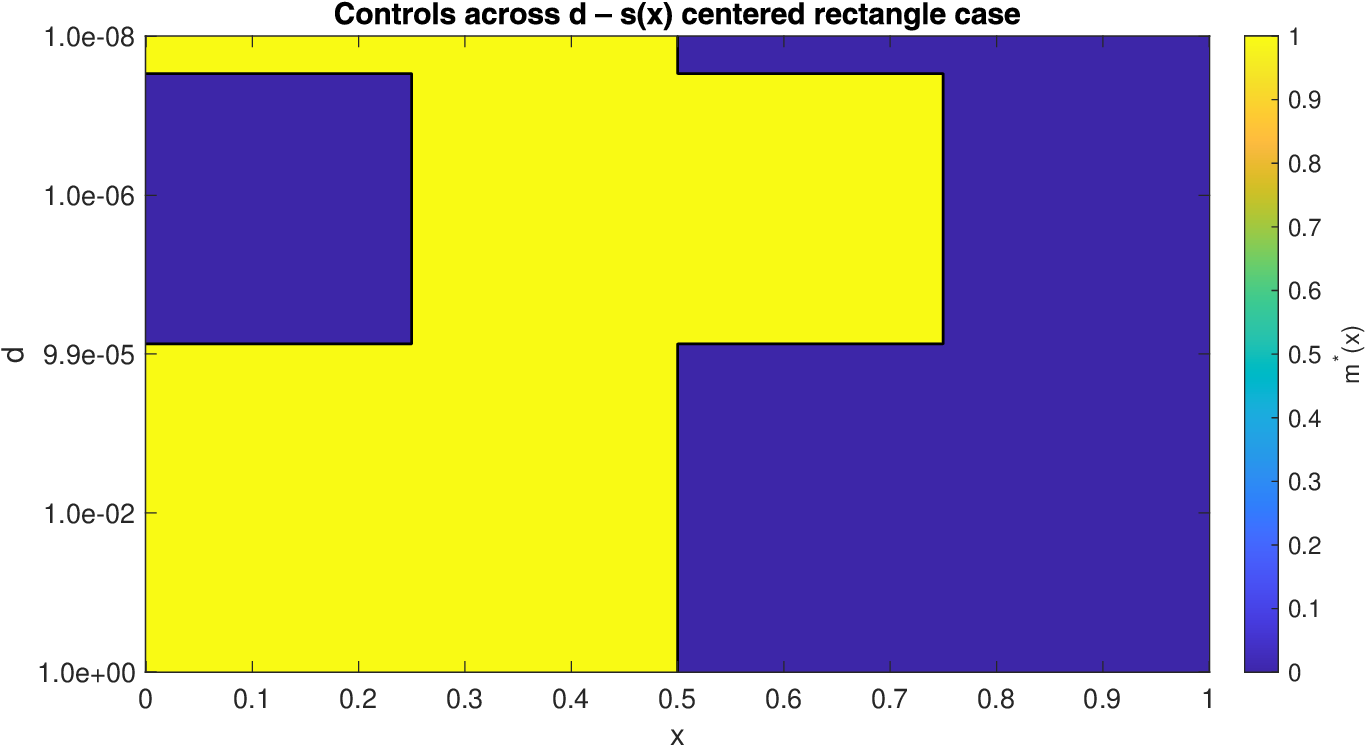} 
\includegraphics[width=0.45\textwidth]{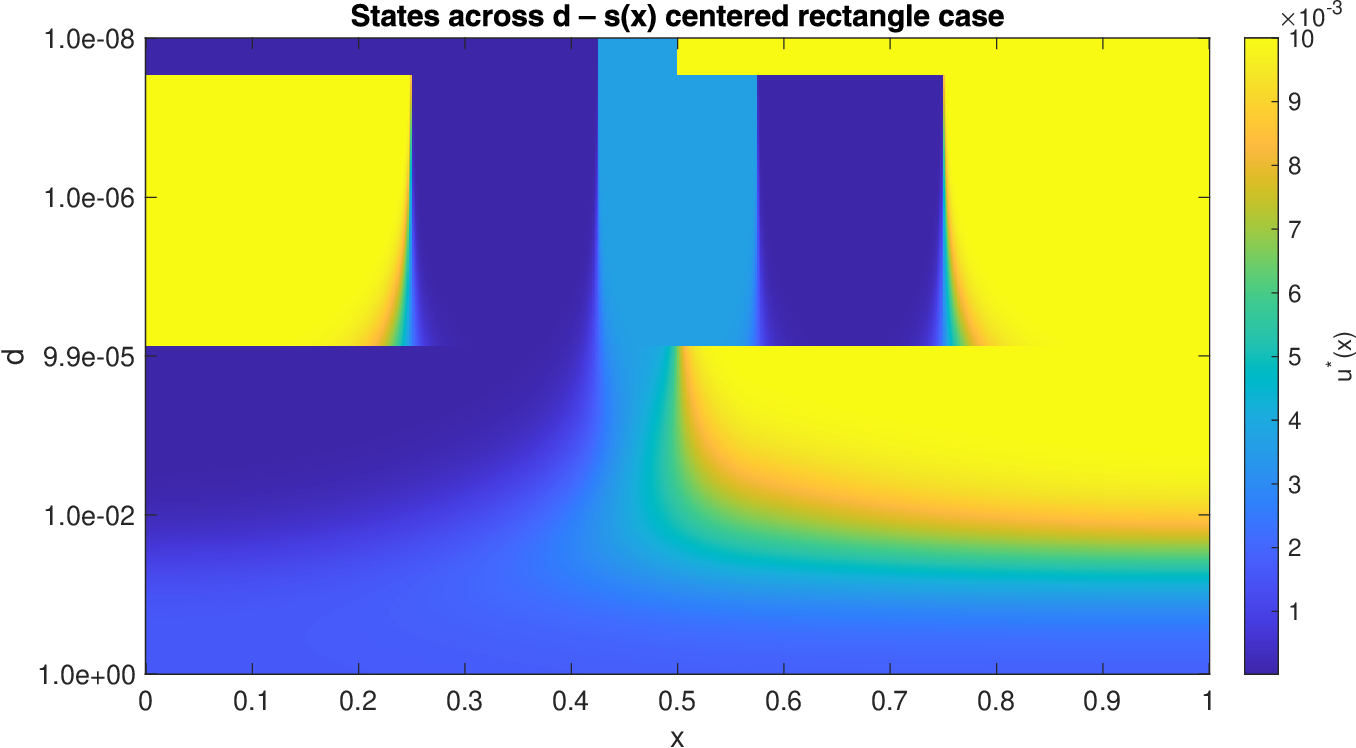}

\includegraphics[width=0.45\textwidth]{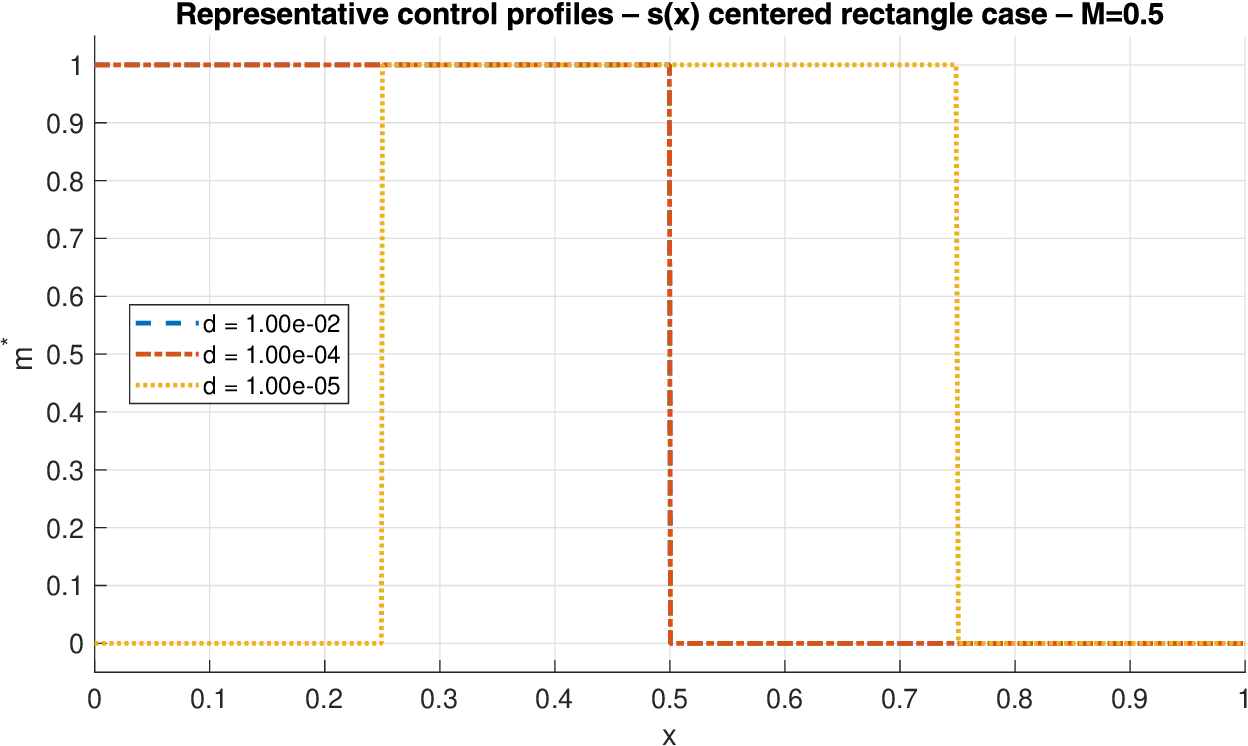} 
\includegraphics[width=0.45\textwidth]{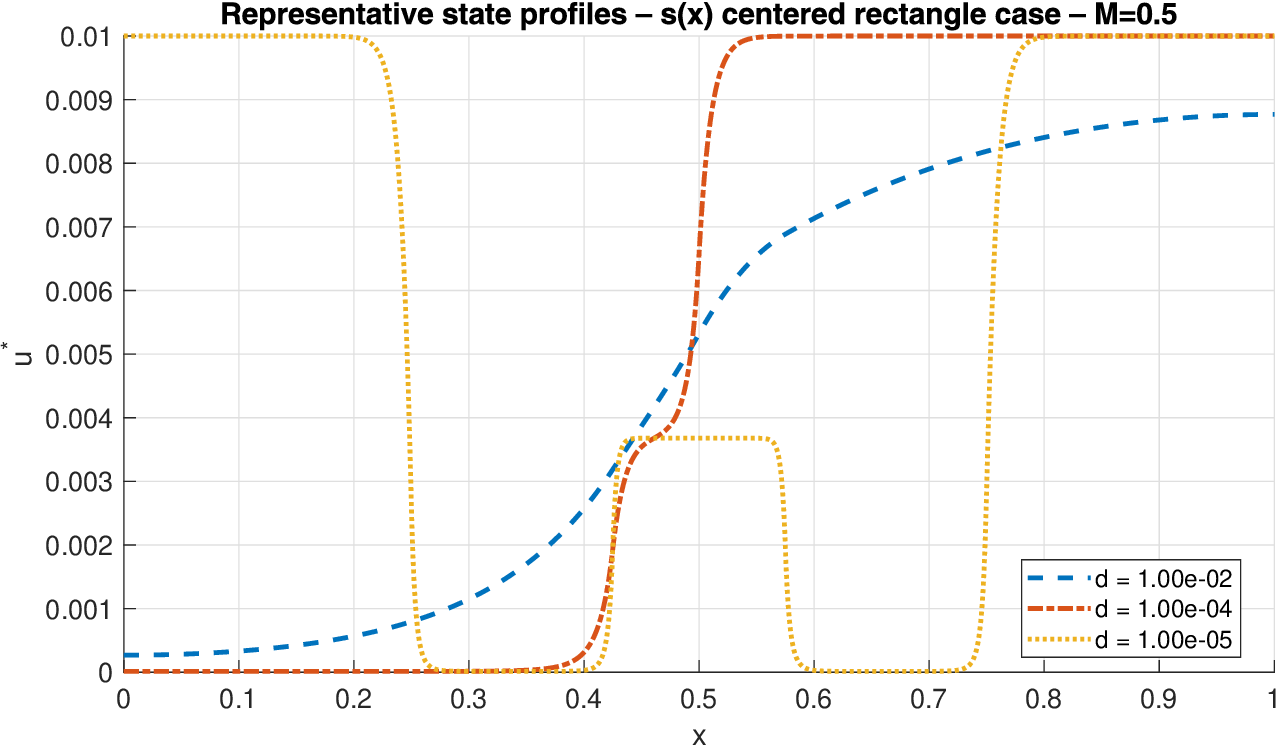}

\caption{Numerical results for the maximization problem in the centered-rectangle growth case. 
Top left: spatial growth profile \(s(x)\), consisting of a rectangular high-growth region centered at \(x=1/2\) and lower values near the boundary, normalized so that its spatial average is \(5/18\). 
Top right: computed value \(J(m^*)\) as a function of the diffusion coefficient \(d\). 
Middle left: heatmap of the computed optimal control \(m^*(x)\) as a function of \(x\) and \(d\). 
Middle right: heatmap of the corresponding state \(u^*(x)\). 
Bottom left: representative control profiles \(m^*(x)\) for selected values of \(d\). 
Bottom right: representative state profiles \(u^*(x)\). 
The parameters are fixed to \(K=0.01\), \(M=0.5\), and \(d\in[10^{-8},1]\). }
\label{fig:m_u_profile_max_1D_centered_rectangle}
\end{figure}

\begin{remark}
This behavior should be compared with the logistic case, where numerical and analytical results show a transition from concentrated optimizers for large diffusion to fragmented optimizers for small diffusion; see \cite[Figure~5, p.~32]{manaprjmpa}. In the present Gompertz simulations we do not observe the same fragmentation phenomenon. Rather, the computed maximizers seem to remain concentrated, while the location of the active region may change with the diffusion coefficient. This suggests that the small-diffusion regime for Gompertz growth is governed by a different mechanism, whose rigorous characterization remains open.
\end{remark}

We finally consider a two-dimensional version of the maximization problem on the square domain
\(\Omega=(0,1)^2\). The growth rate is taken to be constant, \(s(x,y)=5/18\), and the parameters are fixed to \(K=0.01\), \(M=0.2\), and \(d=10^{-4}\). This example illustrates that the concentration phenomena observed in the one-dimensional simulations persist in two space dimensions. Several initial configurations were tested in the numerical optimization, including uniform, strip-like, quarter-disk, and square-type initial controls. Among the computed candidates, the one displayed in Figure~\ref{fig:gompertz_2D_max} corresponds to the largest value of the objective functional.

\begin{figure}[H]
\centering

\includegraphics[width=0.45\textwidth]{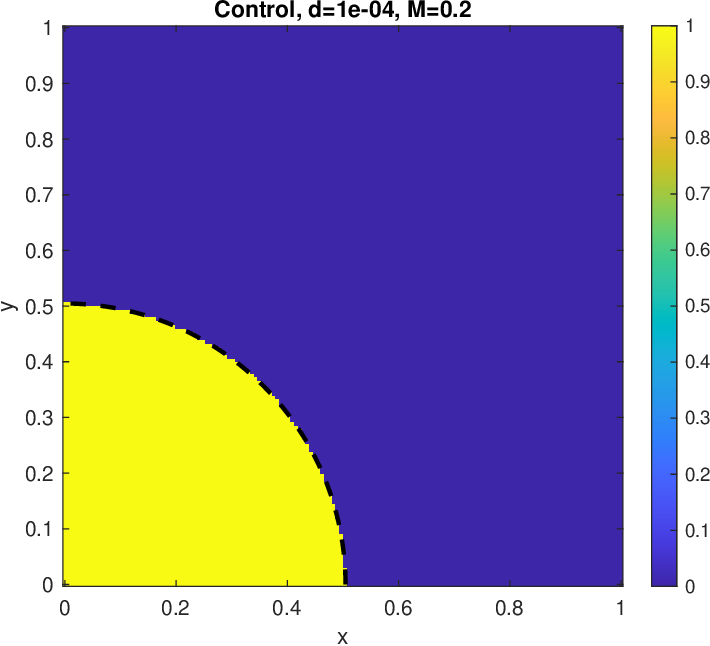}  
\hspace{1cm}
\includegraphics[width=0.45\textwidth]{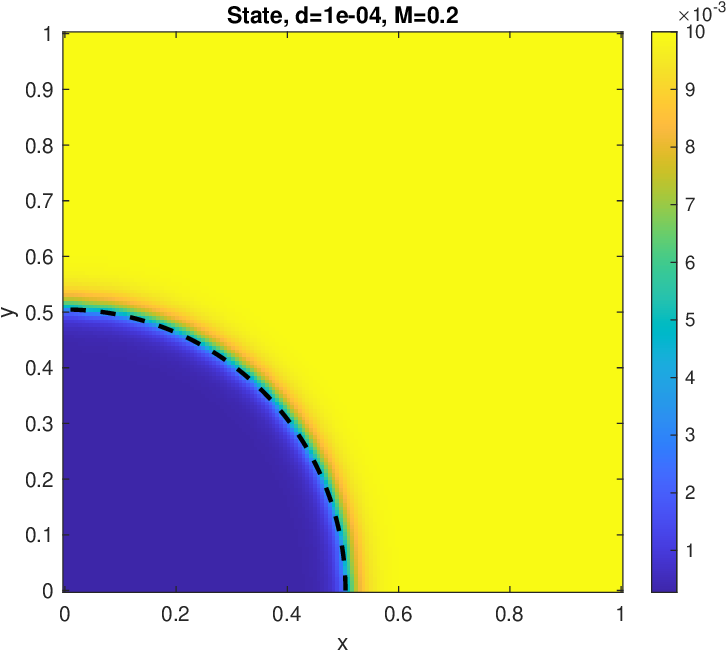} 

\caption{Two-dimensional numerical result for the Gompertz maximization problem. 
Left: optimal control \(m^*(x,y)\). Right: corresponding optimal state \(u^*(x,y)\). 
The parameters are \(s(x,y)=5/18\), \(K=0.01\), \(M=0.2\), and \(d=10^{-4}\). 
The dashed curve marks the boundary of the quarter-disk initial configuration used in one of the tested starts, and is included as a reference for the shape of the computed active set.}
\label{fig:gompertz_2D_max}
\end{figure}

This experiment is consistent with the bang-bang property of Theorem~\ref{thm:mbang}; the quarter-disk-like shape of the active set, however, is a numerical feature and is not covered by the one-dimensional large-diffusion characterization of Theorem~\ref{teo-concentration}. As shown in Figure~\ref{fig:gompertz_2D_max}, the computed control is bang-bang and occupies a quarter-disk-like region near the lower-left corner of the domain. The corresponding state is strongly depleted inside the controlled region and increases sharply across the interface, approaching values close to the carrying capacity \(K=0.01\) away from the active set. Thus, also in two space dimensions, the optimal strategy concentrates the treatment on a localized region; in this case, however, the active set is two-dimensional rather than an interval.

\section*{Acknowledgements}
IMB has been supported by INdAM - GNCS Project, CUP E53C25002010001. This research has been accomplished within RITA (Research ITalian network on Approximation) and the UMI Group TAA (Approximation Theory and Applications).
BP has been partially supported by
the Portuguese government through FCT Fundação para a Ciência e a Tecnologia under the project 2024.14494.PEX with DOI identifier 10.54499/2024.14494.PEX (project ASSO),  by the INdAM - Gnampa project ``Metodi variazionali e topologici tra Fisica, Geometria e Scienze Applicate'' (CUP E53C25002010001).
FG has been partially supported by  Next Generation EU-CUP-J55F21004240001,  DM 737-2021, risorse $2022-2023$ and  by the INdAM - Gnampa project  INdAM - GNAMPA Project,  CUP E53C25002010001. 

\end{document}